\documentclass[12pt]{amsart}
 \usepackage{amsfonts,amssymb}

\setlength{\parindent}{.4 in}
\setlength{\textwidth}{6.5 in}
\setlength{\topmargin} {-.3 in}
\setlength{\evensidemargin}{0 in}
\setlength{\oddsidemargin}{0 in}
\setlength{\footskip}{.3 in}
\setlength{\headheight}{.3 in}
\setlength{\textheight}{8.8 in}
\setlength{\parskip}{.09 in}

\usepackage{mathrsfs}
\let\mathcal\mathscr

\usepackage{array}

\usepackage[all,ps,cmtip]{xy} 

\def\Z{{\bf Z}}

\def\C{{\bf C}}

\def\R{{\bf R}}
\def\Q{{\bf Q}}
\def\P{{\bf P}}
\def\T{{\bf T}}

\def\phi{{\varphi}}

\def\cI{\mathcal{I}}
\def\cD{\mathcal{D}}

\def\cO{\mathcal{O}}

\def\cE{\mathcal{E}}

\def\cC{\mathcal{C}}

\def\cQ{\mathcal{Q}}
\def\cU{\mathcal{U}}

\def\cX{\mathcal{X}}

\def\lra{\longrightarrow}
\def\llra{\hbox to 10mm{\rightarrowfill}}
\def\lllra{\hbox to 15mm{\rightarrowfill}}

\def\llla{\hbox to 10mm{\leftarrowfill}}
\def\lllla{\hbox to 15mm{\leftarrowfill}}

\def\dra{\dashrightarrow}
\def\thra{\twoheadrightarrow}
\def\hra{\hookrightarrow}
\def\isom{\simeq}
\def\eps{\varepsilon}
\def\ie{\hbox{i.e.}}
\def\eg{\hbox{e.g.}}

 \def\vide{\varnothing}

\DeclareMathOperator{\isomto}{\stackrel{{}_{\scriptstyle\sim}}{\to}}

\DeclareMathOperator{\Aut}{Aut}

\DeclareMathOperator{\Def}{Def}

\def\div{\mathop{\rm div}\nolimits}

\DeclareMathOperator{\Hom}{Hom}
\DeclareMathOperator{\Id}{Id}

\DeclareMathOperator{\Ker}{Ker}

\DeclareMathOperator{\lin}{\underset{\mathrm lin}{\equiv}}

\DeclareMathOperator{\PGL}{PGL}
\DeclareMathOperator{\Pic}{Pic}

\DeclareMathOperator{\SO}{SO}

\DeclareMathOperator{\Sing}{Sing}

\DeclareMathOperator{\Sym}{Sym}

\def\llra{\hbox to 10mm{\rightarrowfill}}
\def\lllra{\hbox to 15mm{\rightarrowfill}}

\newtheorem{lemm}{Lemma}[section]
\newtheorem{theo}[lemm]{Theorem}
\newtheorem{coro}[lemm]{Corollary}
\newtheorem{prop}[lemm]{Proposition}

\theoremstyle{definition}

\newtheorem{rema}[lemm]{Remark}

\newtheorem{ques}[lemm]{Question}

\theoremstyle{remark}
\newtheorem*{remark*}{Remark}
\newtheorem*{note*}{Note}

\def\ra{\to}

\begin{document}
\title[Special Fano fourfolds]{Special prime Fano fourfolds of degree 10 and index 2}

\author[O. Debarre]{Olivier Debarre}
\thanks{O. Debarre and L. Manivel are part of the  project VSHMOD-2009   ANR-09-BLAN-0104-01.}
\address{D\'epartement de  Math\'ematiques et Applications,
\'Ecole normale sup\'erieure -- CNRS,
45 rue d'Ulm, 75230 Paris cedex 05, France}
\email{{\tt olivier.debarre@ens.fr}}
\author[A. Iliev]{Atanas Iliev}
\address{Department of Mathematics, Seoul National University, Gwanak Campus, Bldg. 27 Seoul 151-747, Korea}
\email{{\tt ailiev2001@yahoo.com}}
\author[L. Manivel]{Laurent Manivel}
\address{Institut Fourier,  
Universit\'e de Grenoble I -- CNRS,
BP 74, 38402 Saint-Martin d'H\`eres, France}
\email{{\tt laurent.manivel@ujf-grenoble.fr}}

\def\moins{\mathop{\hbox{\vrule height 3pt depth -2pt
width 5pt}\,}}
\def\pmoins{\mathop{\hbox{\vrule height 2.1pt depth -1.3pt
width 4pt\,}}}

\date{\today}

\keywords{Fano fourfolds, Torelli problem, lattices, period domains, unirationality, rationality, special fourfolds, Noether-Lefschetz locus, K3 surfaces, cubic fourfolds.}
\subjclass{14C25, 14C30, 14C34, 14D20, 14D23, 14E05, 14E08, 14E30, 14J10, 14J28, 14J35, 14J45, 14J70, 14M15}

 \begin{abstract}
 We analyze (complex) prime Fano fourfolds of degree 10 and index 2. Mukai gave a complete
geometrical description; in particular, most of them are contained in a Grassmannian $G(2,5)$. They are all unirational and, as in the case of cubic fourfolds, some are rational, as already remarked by Roth in 1949.

We show that their middle cohomology is of K3 type and that their period map is dominant, with smooth 4-dimensional fibers, onto a   20-dimensional bounded 
symmetric period domain of type IV. Following Hassett, we say that such a fourfold is {\em special} if it contains a surface whose cohomology class does not come from the Grassmannian $G(2,5)$. Special fourfolds correspond to a countable union of   hypersurfaces (the Noether-Lefschetz locus) in the period domain, labelled by a positive integer $d$. We describe special fourfolds for some low values of $d$. We also characterize those integers $d$ for which special fourfolds  do exist.
 \end{abstract}
 
\maketitle


 \section{Introduction}
 
 One of the most vexing classical questions in complex algebraic geometry is whether there exist  irrational smooth cubic hypersurfaces  in $\P^5$. They are all unirational, and rational examples are easy to construct (such as Pfaffian  cubic fourfolds)  but no smooth cubic fourfold has yet been proven to be 
irrational. The general feeling seems to be that   the question should have an   affirmative answer 
but, despite numerous attempts, it is still open. 

In a couple of very interesting   articles on cubic fourfolds (\cite{has}, \cite{hasrat}), Hassett adopted a Hodge-theoretic approach and,   using the  period map (proven to be injective by Voisin in \cite{voi}) and the geometry of the  period domain, a 20-dimensional bounded 
symmetric domain of type IV,  he related geometrical properties of a cubic fourfold to arithmetical properties of its period point. 

We do not solve the rationality question  in this article, but we investigate instead similar questions for another family  of Fano fourfolds   (see \S\ref{se2} for their definition). Again, they are all unirational (see \S\ref{suni}), and rational examples were found by Roth (see \cite{rot}, and also \cite{pro}  and \S\ref{exam}), but no irrational examples are known.

We prove in \S\ref{scoho} that the moduli stack $\cX_{10}$ associated with these fourfolds  is smooth of dimension 24 (Proposition \ref{hitx}) and that the period map is smooth and dominant onto, again,  a 20-dimensional bounded 
symmetric domain of type IV (Theorem \ref{sub}). We identify the underlying lattice in \S\ref{spp}. Then, following  \cite{has}, we define in \S\ref{sl} hypersurfaces  in the period domain which parametrize ``special'' fourfolds $X$,   whose period point satisfy a non-trivial arithmetical property depending on a positive integer $d$, the {\em discriminant.} As in \cite{has}, we characterize in Proposition \ref{p54} those integers $d$ for which the non-special cohomology of a special $X$ is essentially the primitive cohomology of a K3 surface; we say that this K3 surface is {\em associated} with $X$. Similarly, we characterize in Proposition \ref{p55} those $d$ for which the non-special cohomology of a special $X$ is the non-special cohomology of a cubic fourfold in the sense of \cite{has}.

In \S\ref{exam}, we  give  geometrical constructions    of    special fourfolds for   $d\in\{8,10,12\}$; in particular, we discuss some rational examples (already present in \cite{rot} and \cite{pro}). When $d=10$, the associated K3 surface (in the sense of Proposition \ref{p54}) does appear in the construction; when $d=12$, so does the associated cubic fourfold (in the sense of Proposition \ref{p55})  and they are even birationally isomorphic.

In \S\ref{cons}, we  characterize the positive integers $d$ for which there exist  (smooth) special fourfolds of discriminant $d$. As in \cite{has}, our construction relies on the surjectivity of the period map for K3 surfaces. Finally, we summarize in \S\ref{last} some of our results on the period map and ask a couple of questions about its geometry.

So in some sense, the picture is very   similar to what we have for cubic fourfolds, with one big difference: the Torelli theorem does not hold. In a forthcoming article, building on the link between our fourfolds and EPW sextics discovered in \cite{im}, we will analyze more closely the (4-dimensional) fibers of the period map.

 \medskip
\noindent{\bf Acknowledgements.}
 We thank  V. Gritsenko and 
 G. Nebe for their help with lattice theory,  
 and K. Ranestad   and  C.-L. Wang for drawing our attention to the references \cite{jk} and \cite{fuwang} respectively.
 
  \section{Prime Fano fourfolds of degree 10 and index 2}\label{se2}
 
Let $X$ be a   (smooth) prime  Fano fourfold  of degree 10 (\ie, of ``genus'' 6) and index 2; this means that
$\Pic(X)$ is generated by the class of an ample divisor $H$ such that $H^4=10$ and $-K_X\lin 2H$. 
Then $H$ is very ample and embeds $X$ in $\P^8$  as follows (\cite{muk3}; \cite{ip}, Theorem 5.2.3).

   Let    $V_5$ be a $5$-dimensional   vector space (our running notation is $V_k$ for any $k$-dimensional vector space). Let $G(2,V_5) \subset \P(\wedge^2 V_5)$ be the   Grassmannian in its Pl\"ucker embedding and let   $CG\subset \P(\C\oplus \wedge^2V_5)\isom\P^{10}$ be the cone, with vertex $v=\P(\C)$, over  $G(2,V_5)$. Then
 $$X=CG\cap \P^8\cap Q,
 $$
 where   $ Q$ is a quadric. 
  There are two cases:
\begin{itemize}
\item either $v\notin\P^8$, in which case  $X$ is isomorphic to the intersection   of $G(2,V_5)\subset \P(\wedge^2 V_5)$ with a hyperplane (the projection of $\P^8$ to $ \P( \wedge^2V_5)$) and a quadric;
\item or $v\in\P^8$, in which case $\P^8$ is a cone over a $\P^7\subset   \P( \wedge^2V_5)$ and   $X$ is a double cover   of $ G(2,V_5)\cap \P^7$ branched along its intersection with a quadric.
\end{itemize}
The varieties obtained by the second construction will be called ``of Gushel type'' (after Gushel, who studied the 3-dimensional case in \cite{gus}). They are specializations of varieties obtained  by the first construction.

Let $\cX_{10}$ be the irreducible moduli stack 
for (smooth) prime  Fano fourfolds  of degree 10 and index 2,  let $\cX_{10}^G$ be the (irreducible closed) substack of those which are of Gushel type, and let $\cX_{10}^0:=\cX_{10}\moins \cX_{10}^G$. We have
$$\dim(\cX_{10})=24\quad ,\quad\dim(\cX_{10}^G)=22.
$$

\section{Unirationality}\label{suni}

 Let $G := G(2,V_5)$ and let $X := G\cap \P^8\cap  Q$ be a fourfold of type $\cX^0_{10}$. We give a new proof of the classical fact that $X$ is unirational.

 The hyperplane $\P^8$ is defined by  a non-zero skew-symmetric form $\omega$ on $V_5$, and the singular locus of $G^\omega:=G\cap \P^8$ is isomorphic to $G(2,\Ker(\omega))$. Since $X$ is smooth, this singular locus must be finite, hence $\omega$ must be of maximal rank and $G^\omega$ is also smooth.   The variety $G^\omega$ is the unique del Pezzo fivefold of degree 5 (\cite{ip}, Theorem 3.3.1); it  parametrizes isotropic 2-planes for the   form   $\omega $.

If $V^\omega_1\subset V_5$ is the kernel   of  $\omega$, the 3-plane $\P^3_0:=\P(V^\omega_1\wedge V_5)$ of lines passing through $[V^\omega_1]\in \P(V_5)$ is contained in $ G^\omega$, hence $X$ contains  $\Sigma_0:=\P^3_0\cap Q$, a ``$\sigma$-quadric'' surface,\footnote{This means that the lines  in $\P(V_5)$ parametrized by $\Sigma_0$ all pass through a fixed point. Since $X$ is smooth, it contains no 3-planes by the Lefschetz theorem, hence $\Sigma_0$ is indeed a surface.} possibly reducible.
It is the only   $\sigma$-quadric contained in $X$.\footnote{If $\Sigma\subset X$ is a $\sigma$-quadric, its span $\P(V'_1\wedge V_5)$ is contained in the isotropic Grassmannian $G_\omega(2,V_5)$, hence $V'_1$ is the kernel of $\omega$ and $\Sigma=\Sigma_0$.}

 \begin{prop}\label{unir}
 Any   fourfold $X$ of type $\cX^0_{10}$ is unirational. More precisely, there is a rational double cover $\P^4\dra X$.
 \end{prop}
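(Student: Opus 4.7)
The plan is to construct the double cover $\P^4 \dra X$ by parametrizing residual intersections of lines in the del Pezzo fivefold $W = G_\omega(2, V_5)$ that meet the $\sigma$-quadric $\Sigma_0$. Any such line $\ell \subset W$ satisfies $\ell \cdot Q = 2$, so since $\ell \subset W$ one has $\ell \cap X = \ell \cap Q$, a length-2 scheme; for each intersection point $s \in \ell \cap \Sigma_0$, the residual point $x \in \ell \cap X$ gives a well-defined element of $X$.

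First I would organize the construction via the totally isotropic 3-planes of $V_5$ for $\omega$. Such a 3-plane $V_3'$ necessarily contains $V_1 = \ker\omega$, and its image $V_3'/V_1$ is a Lagrangian 2-plane in the symplectic 4-space $V_5/V_1$; hence they form a smooth 3-dimensional quadric (the Lagrangian Grassmannian of $V_5/V_1$). Each such $V_3'$ gives a plane $\pi(V_3') := \P(\wedge^2 V_3') \isom \P^2$ lying in $W$, which cuts $\P^3 := \P(V_1 \wedge V_5)$ along a line $\P(V_1 \wedge V_3') \isom \P^1$, hence meets $\Sigma_0$ itself in two points. Conversely, for a general $x = [V_2] \in X \setminus \Sigma_0$ (so $V_1 \not\subset V_2$), the 3-plane $V_3'(x) := V_1 + V_2$ is immediately checked to be totally isotropic and is the \emph{unique} totally isotropic 3-plane containing $V_2$; so $\pi(V_3'(x))$ is the unique plane of this type in $W$ through $x$.

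The key count is: through a general $x \in X$ there are exactly two lines of $W$ meeting $\Sigma_0$, namely the two lines joining $x$ to the two points of $\Sigma_0 \cap \pi(V_3'(x))$. All other lines of $W$ through $x$ correspond to pencils $\{V_2'' : V_1' \subset V_2'' \subset V_3'\}$ with $V_3' \supset V_2$ but $V_1 \not\subset V_3'$, so no 2-plane on such a line can contain $V_1$, and the line misses $\P(V_1 \wedge V_5) \supset \Sigma_0$ entirely. This yields the desired $2{:}1$ property.

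To produce the cover, let $T$ be the parameter space of triples $(V_3', s, \ell)$ with $V_3'$ totally isotropic, $s \in \Sigma_0 \cap \pi(V_3')$, and $\ell \subset \pi(V_3')$ a line through $s$; sending such a triple to the residual point of $\ell \cap X$ defines a rational map $T \dra X$ of degree 2. The projection $T \to \Sigma_0$, $(V_3', s, \ell) \mapsto s$, presents $T$ as a $\P^1 \times \P^1$-bundle over $\Sigma_0 \isom \P^1 \times \P^1$: the first $\P^1$ parametrizes the Lagrangian 2-planes of $V_5/V_1$ containing the line $V_2/V_1$, and the second parametrizes the lines through $s$ in $\pi(V_3') \isom \P^2$. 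Since iterated $\P^1$-bundles over rational bases are rational, $T$ is rational of dimension 4, hence birational to $\P^4$, and the composition $\P^4 \dra T \dra X$ is the desired double cover. The main obstacle is the bookkeeping justifying that the fiber over a general $x \in X$ has cardinality exactly 2; this hinges on the uniqueness of the totally isotropic 3-plane through a general point of $W$ and on the verification that no other line of $W$ through $x$ meets $\P(V_1 \wedge V_5)$.
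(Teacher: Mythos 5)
Your construction is correct and is essentially the paper's own proof: your parameter space $T$ (triples $(V_3',s,\ell)$) is exactly the paper's $(\P^1\times\P^1)$-bundle $Y=\bigcup_{p\in\Sigma_0}\P(V_{2,p})\times\P(V_{2,p}^\bot/V_{2,p})$ over $\Sigma_0$, since the totally isotropic $V_3'$ containing $V_{2,p}$ are precisely the $V_3$ with $V_{2,p}\subset V_3\subset V_{2,p}^\bot$, and your degree-$2$ count (the lines of $W$ through a general $x$ that meet $\P(V_1\wedge V_5)$ all lie in the $\rho$-plane $\pi(V_1+V_{2,x})$ and hit it along the line $\P(V_1\wedge V_{2,x})$, which meets the quadric $\Sigma_0$ in two points) is the same computation. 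The only cosmetic caveat is that $\Sigma_0$ need not be smooth, so write ``irreducible quadric surface, hence rational'' rather than $\Sigma_0\isom\P^1\times\P^1$.
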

 
 \begin{proof}
 If $p\in \Sigma_0$, the associated $V_{2,p}\subset V_5$ contains ${V_1^\omega}$, hence its $\omega$-orthogonal is a hyperplane $V^\bot_{2,p}\subset V_5$. The $(\P^1\times\P^1)$-bundle  $Y:=\bigcup_{p\in \Sigma_0} \P(V_{2,p})\times \P(V^\bot_{2,p}/V_{2,p})$ over $\Sigma_0$ is then a rational fourfold.

A general point of $Y$ defines a   flag ${V_1^\omega}\subset V_{2,p}\subset V_3\subset V^\bot_{2,p}\subset V_5$ hence a
  line in $G(2,V_5)$ passing through $p$ and contained in $\P^8$. This line meets $X\moins \Sigma_0$ at a unique point, and this defines a rational map $Y\dra X$.
 
 This map has degree 2: if $x$ is general in $X$,  lines in $G(2,V_5)$ through $x$ meet $\P({V_1^\omega}\wedge V_5)$ in points $p$ such that $V_{2,p}={V_1^\omega}+V_1$, with $V_1\subset V_{2,x}$, hence the intersection is $\P({V_1^\omega}\wedge V_{2,x})$. This is a line, therefore it   meets $\Sigma_0$ in  two points.  \end{proof}
 
 As Kuznetsov remarked, the unirationality construction above can be made more precise by showing that 
the blow-up of $X$ along the surface $\Sigma_0$ can be expressed as a conic bundle over the 3-dimensional quadric $G(2,V_5/{V_1^\omega})\cap\P^8$ and that the exceptional divisor is rational and maps 2-to-1 onto the base.

 \section{Cohomology and the local period map}\label{scoho}

As in \S\ref{suni}, set   $G := G(2,V_5)$ and  let ${G^\omega}:=G\cap \P^8$ be a smooth hyperplane section of $G$.

\subsection{The Hodge diamond of $X$}

 The inclusion ${G^\omega}\subset G$ induces isomorphisms 
 \begin{equation}\label{hodw5} 
 H^k(G,\Z)\isomto H^k({G^\omega},\Z)\qquad\text{for all }k\in\{0,\dots,5\}. 
  \end{equation}
  The Hodge diamond for a fourfold  $X := {G^\omega}\cap  Q$ of type $\cX^0_{10}$   was computed in \cite{im}, Lemma 4.1; its upper half is as follows:
 \begin{equation}\label{hodx4}
 \begin{array}{ccccccccc}
 &&&& 1   \\
 &&& 0 && 0  \\
 &&0&& 1 &&0   \\
 &0&& 0 && 0 &&0  \\
0 &&1&& 22 &&1 &&0 
\end{array} 
 \end{equation}
When $X$ is of Gushel type, the Hodge diamond remains the same. In all cases, the rank-2 lattice $H^4(G,\Z)$ embeds into $ H^4(X,\Z)$ and {\em  we define the vanishing cohomology $H^4(X,\Z)_{\rm van}$   as the orthogonal} (for the intersection form) {\em  of the image of $ H^4(G,\Z) $ in $H^4(X,\Z)$.} It is a lattice of rank 22.

 \subsection{The local deformation space}
 
We compute the cohomology groups of the tangent sheaf $T_X$.

 \begin{prop}\label{hitx}
For any    fourfold $X$ of type $\cX_{10}$, we have
$$H^p(X,T_X)=0 \qquad \hbox{\it for}\quad p\ne 1 $$
and $h^1(X,T_X)=24$. In particular, the group of automorphisms of $X$ is finite and the local deformation space $\Def(X)$ is smooth of dimension 24.
\end{prop}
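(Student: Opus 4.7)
The plan is a direct Koszul-and-Bott calculation in the ambient Grassmannian, combined with the normal bundle sequence. I first treat the generic case $X\in\cX_{10}^0$, where $X=G\cap\P^8\cap Q$ is a smooth complete intersection of a hyperplane and a quadric inside $G:=G(2,V_5)$. Tensoring the Koszul resolution
$$0 \to \cO_G(-3) \to \cO_G(-1)\op \cO_G(-2) \to \cO_G \to \cO_X \to 0$$
successively with $T_G=\cU^*\ot\cQ$ and with $\cO_G(k)$ for $k=1,2$, and using that each twist $T_G(-d)$ for $d\in\{1,2,3\}$ is an irreducible homogeneous bundle on $G$ whose cohomology vanishes by Bott's theorem, I obtain $H^i(X,T_G|_X)\isom H^i(G,T_G)$, which is $\mathfrak{sl}(V_5)$ in degree $0$ and zero otherwise. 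An analogous Koszul argument gives $h^0(X,\cO_X(1))=9$, $h^0(X,\cO_X(2))=39$, and vanishing of all higher cohomology; hence $h^0(X,N_{X/G})=48$ and $H^i(X,N_{X/G})=0$ for $i\geq 1$, with $N_{X/G}=\cO_X(1)\op\cO_X(2)$.

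Plugging this into the normal bundle sequence $0\to T_X\to T_G|_X\to N_{X/G}\to 0$ at once forces $H^p(X,T_X)=0$ for $p\geq 2$ and leaves the four-term exact sequence
$$0\to H^0(X,T_X)\to \mathfrak{sl}(V_5)\xrightarrow{\alpha} H^0(X,N_{X/G})\to H^1(X,T_X)\to 0.$$
The map $\alpha$ is the differential at $(\P^8,Q)$ of the $\SL(V_5)$-action on the parameter space of pairs (hyperplane, quadric), so $H^0(X,T_X)$ is canonically the Lie algebra of the stabilizer of $X$ in $\PGL(V_5)$. The main obstacle will be to show this Lie algebra vanishes: no nonzero $\xi\in\mathfrak{sl}(V_5)$ may simultaneously preserve the linear form cutting out $\P^8\subset\P(\wedge^2V_5)$ and the class of $Q$ modulo the sum of the Pl\"ucker ideal and the image of multiplication by $H$. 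I would handle this by decomposing $\xi$ into semisimple and nilpotent parts via its Jordan form on $V_5$ and eliminating each resulting normal form using the smoothness of $X$. Granting $H^0(X,T_X)=0$, the dimension count yields $h^1(X,T_X)=48-24=24$, and the smoothness and dimension assertion for $\Def(X)$, together with finiteness of $\Aut(X)$, follow.

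For the Gushel stratum $\cX_{10}^G\subset\cX_{10}$, of codimension $2$, I would rerun the argument with $X$ replaced by the double cover $\pi\colon X\to W'$ of the del~Pezzo fourfold $W':=G\cap\P^7$: the splitting $\pi_*\cO_X=\cO_{W'}\op\cO_{W'}(-1)$ together with the ramification exact sequence for $\pi$ reduces the cohomology of $T_X$ to the cohomologies of $T_{W'}$ and $\cO_{W'}(-k)$ on $W'$, both handled by a second (codimension-two) Koszul-and-Bott computation on $W'\subset G$. The numerical output matches, completing the proof.
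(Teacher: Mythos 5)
Your overall strategy --- the normal bundle sequence $0\to T_X\to T_G\vert_X\to N_{X/G}\to 0$ combined with the Koszul resolution and Bott vanishing on $G$ --- is a legitimate alternative to the paper's argument, and the parts you actually carry out check out: $H^i(X,T_G\vert_X)\isom H^i(G,T_G)=\delta_{i,0}\,\mathfrak{sl}(V_5)$ (the twists $T_G(-d)$, $d=1,2,3$, are indeed acyclic), $h^0(X,N_{X/G})=9+39=48$ with no higher cohomology, hence $H^p(X,T_X)=0$ for $p\ge2$ and $h^1(X,T_X)=24+h^0(X,T_X)$. (The paper gets the $p\ge2$ vanishing in one line from $T_X\isom\Omega^3_X(2)$ and Kodaira--Akizuki--Nakano, and gets $h^1=24$ from $-\chi(X,\Omega^1_X(-2))$ rather than from your four-term sequence; both routes are fine.)

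The genuine gap is at the step you yourself flag as ``the main obstacle'': $H^0(X,T_X)=0$. You correctly identify this group with the Lie algebra of the stabilizer of $X$ in $\PGL(V_5)$, but you do not prove that it vanishes; you only announce a Jordan-normal-form case analysis. That analysis is the entire substance of the assertion that $\Aut(X)$ is finite --- one must exclude, for \emph{every} smooth $X$, every semisimple and every nilpotent $\xi\in\mathfrak{sl}(V_5)$ preserving both the skew form $\omega$ defining $\P^8$ and the quadric modulo the Pl\"ucker ideal and multiples of $H$ --- and it is not a routine verification, so as written the proof is incomplete at its central point. The paper avoids the case analysis entirely by a short cohomological argument: from $0\to\Omega^2_X\to\Omega^3_W(2)\vert_X\to T_X\to 0$ and $h^{2,1}(X)=0$, the vanishing of $H^0(X,T_X)$ reduces to $H^0(W,\Omega^3_W(2))\isom H^5(W,\Omega^2_W(-2))^\vee$, which is killed by the conormal sequence of $W\subset G$ and Bott's theorem. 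You should either carry out the Jordan analysis in full or substitute a computation of this kind. The same issue recurs in your Gushel paragraph, which is only a sketch: there the vanishing of $H^0(X,T_X)$ requires $H^0(W',T_{W'})=H^0(W',T_{W'}(-1))=0$ for the fourfold section $W'=G\cap\P^7$, and the first of these (finiteness of $\Aut(W')$) is itself a nontrivial input, which the paper imports from \cite{dim} via the identification $H^0(W',T_{W'})\isom H^4(W',\Omega^1_{W'}(-2))^\vee=0$.
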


 \begin{proof}
For $p\ge 2$, the conclusion follows from the Kodaira-Akizuki-Nakano   
  theorem, since
  $T_X\isom \Omega^3_X(2) $. 
  
 {\em Assume first that $X$ is {\em not} of Gushel type,} so that  $X  = {G^\omega}\cap  Q$.
  
Let us prove $H^0(X,T_X)=0$. We have inclusions $X\subset {G^\omega}\subset G$. The conormal exact sequence $0\to \cO_X(-2)\to \Omega^1_{G^\omega}\vert_X\to \Omega^1_X\to 0$ induces an exact sequence
 $$0\to \Omega^2_X\to \Omega^3_{G^\omega}(2)\vert_X\to T_X\to 0.$$
  Since  $H^1(X,\Omega^2_X)$  vanishes,  it is enough to show $H^0(X,\Omega^3_{G^\omega}(2)\vert_X)=0$. Since $H^1({G^\omega},\Omega^3_{G^\omega}  )= 0$, it is enough to show  that $H^0({G^\omega},\Omega^3_{G^\omega}(2) )$, or equivalently its Serre-dual $H^5({G^\omega},\Omega^2_{G^\omega}(-2) )$, vanishes.
  
  The conormal exact sequence of ${G^\omega}$ in $G$   induces an exact sequence
 $$0\to \Omega^1_{G^\omega}(-3)\to \Omega^2_G(-2)\vert_{G^\omega}\to \Omega^2_{G^\omega}(-2)\to 0.$$
 The desired vanishing follows since $H^5(G, \Omega^2_G(- 2))=H^6(G, \Omega^2_G(- 3))=0$ by Bott's theorem. 

 {\em Assume now that $X$   {\em is} of Gushel type,} so we have a double cover $\pi_X:X\to W:=G\cap \P^7$ branched along the intersection of $W$ with a quadric.
 We have an exact sequence
  \begin{equation}\label{xwd}
  0\to T_X\to \pi_X^*T_{W}\to \cO_R(R)\to 0,\end{equation}
  where $R\subset X$ is the ramification of $\pi_X$. 
 Since $\Omega^1_G(-2)\vert_{W}$ is acyclic (\cite{dim}, proof of Proposition 3.3), we have (\cite{dim}, (4.7))
 $$H^0({W}, T_{W})\isom H^4({W}, \Omega^1_{W}(-2))^\vee=0.$$
 Since $H^0(X,\pi_X^*T_{W})\isom H^0({W}, T_{W})\oplus H^0({W}, T_{W}(-1))$, we obtain the desired vanishing in the Gushel case.
  
 Since $X$ is (anti)canonically polarized, this vanishing implies that its group of automorphisms is a discrete subgroup of $\PGL(9,\C)$, hence is finite.
 Finally, we leave the computation of
$h^1(X,T_X)=-\chi(X,T_X)=-\chi(X,\Omega^1_X(-2))$  to the reader.  
\end{proof}

 \begin{rema}When $X$    is  of Gushel type, we have with the notation above  $ H^1(R,\cO_R(2))=0 $    by Kodaira vanishing, hence  an exact sequence
 $$ 0  \to   H^0(R,\cO_R(2)) \to   H^1(X,T_X) \to   H^1({W}, T_{W})\oplus H^1({W}, T_{W}(-1)) \to   0 .$$Moreover,$$ H^1({W}, T_{W})\isom H^3({W}, \Omega^1_{W}(-3))^\vee =0 .$$ Similarly,  $H^1({W}, T_{W}(-1))\isom L^\vee$,  where   $L\subset \wedge^2V_5^\vee$ is the 2-dimensional vector space that defines the $\P^8=L^\bot\subset \wedge^2V_5 $ that defines $W $. The kernel of the map $ H^1(X,T_X) \to     H^1({W}, T_{W}(-1))$ describes the tangent space to the Gushel locus.\end{rema}

 \subsection{The local period map}
 
  Let $X$ be a  fourfold of type $\cX_{10}$  and let $\Lambda$ be a fixed lattice isomorphic to  $ H^4(X,\Z)_{\rm van}$. 
By Proposition \ref{hitx}, $X$ has a smooth (simply connected) local deformation space $\Def(X)$ of  dimension 24. 
By (\ref{hodx4}), the Hodge structure of $H^4(X)_{\rm van}$ is of K3 type hence we can define a morphism
 $$\Def(X)\to \P(\Lambda\otimes\C)$$
 with values in the smooth 20-dimensional quadric
 $$\cQ:=\{\omega\in \P(\Lambda\otimes\C)\mid   (\omega\cdot \omega) = 0\} 
.
 $$
We show below  (Theorem \ref{sub}) that the restriction $p:\Def(X)\to\cQ$, the {\em local period map,}  is a submersion.

Recall from \S\ref{suni}  that the hyperplane $\P^8$ is defined by a skew-symmetric form on
$V_5$ whose kernel is a one-dimen\-sional subspace ${V_1^\omega}$ of $V_5$. 

\begin{lemm}\label{le41} There is an isomorphism
 $H^1({G^\omega},\Omega_{G^\omega}^3(2))\simeq V_5/{V_1^\omega}$.
\end{lemm}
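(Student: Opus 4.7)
The plan is to reduce to a cohomology computation on $G = G(2,V_5)$ via the conormal exact sequence of $W \subset G$, handle the computations on $G$ by Borel--Weil--Bott, and then identify the final answer using $\GL(V_5)_\omega$-equivariance. Starting from $0 \to \cO_W(-1) \to \Omega^1_G|_W \to \Omega^1_W \to 0$, taking $\wedge^3$ and twisting by $\cO(2)$ yields
$$0 \to \Omega^2_W(1) \to \Omega^3_G(2)|_W \to \Omega^3_W(2) \to 0.$$
Its long exact sequence, combined with the vanishing $H^0(W, \Omega^3_W(2)) = 0$ already established in the proof of Proposition \ref{hitx} and the vanishing $H^2(W, \Omega^2_W(1)) = 0$ to be explained below, expresses $H^1(W, \Omega^3_W(2))$ as the cokernel of an injection $H^1(W, \Omega^2_W(1)) \hookrightarrow H^1(W, \Omega^3_G(2)|_W)$.

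I then compute the three ingredients. Applying Borel--Weil--Bott to the Schur decomposition $\Omega^3_G \simeq \Sym^3 \mathcal{S}(-1) \oplus \mathcal{S}(-1) \otimes S_{(2,1)} \mathcal{Q}^\vee$ shows that $H^\bullet(G, \Omega^3_G(1)) = 0$ (both summand weights land on walls of the Weyl chamber after adding $\rho$) and $H^i(G, \Omega^3_G(2)) = 0$ for $i \neq 1$, while $H^1(G, \Omega^3_G(2))$ is naturally isomorphic, up to a determinantal twist, to $V_5$ as an $\SL(V_5)$-representation. The Koszul sequence $0 \to \Omega^3_G(1) \to \Omega^3_G(2) \to \Omega^3_G(2)|_W \to 0$ then yields $H^1(W, \Omega^3_G(2)|_W) \simeq V_5$. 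Analogously $H^\bullet(G, \Omega^2_G(1)) = 0$, and the sequence $0 \to \Omega^2_G \to \Omega^2_G(1) \to \Omega^2_G(1)|_W \to 0$ gives $H^1(W, \Omega^2_G(1)|_W) \simeq H^{2,2}(G) \simeq \C^2$. Combining with $0 \to \Omega^1_W \to \Omega^2_G(1)|_W \to \Omega^2_W(1) \to 0$ and $h^{p,q}(W)=0$ for $p \neq q$, I obtain $H^2(W, \Omega^2_W(1)) = 0$ and $H^1(W, \Omega^2_W(1)) \simeq \coker\bigl(H^{1,1}(W) \to H^{2,2}(G)\bigr)$; this last map is the Gysin pushforward, sending $h_W \mapsto h_G^2 = \sigma_{1,1} + \sigma_2$, so its cokernel is one-dimensional.

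Putting it all together, the relevant portion of the long exact sequence becomes
$$0 \to \C \hookrightarrow V_5 \to H^1(W, \Omega^3_W(2)) \to 0,$$
so $H^1(W, \Omega^3_W(2)) \simeq V_5/L$ for a unique line $L \subset V_5$. To pin down $L$, I invoke $\GL(V_5)_\omega$-equivariance of the whole construction: the stabilizer of $\omega$ preserves $V_1 = \ker(\omega)$ and acts on the quotient $V_5/V_1$ through its image in $\Sp(V_5/V_1)$, hence irreducibly. Thus the only $\GL(V_5)_\omega$-invariant line in $V_5$ is $V_1$ itself, so $L = V_1$ and we conclude $H^1(W, \Omega^3_W(2)) \simeq V_5/V_1$.

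The main technical obstacle is the Borel--Weil--Bott book-keeping needed to identify $H^1(G, \Omega^3_G(2))$ canonically as $V_5$ (distinguishing it from $V_5^\vee$ and various determinantal twists), and to verify that the map $H^1(W, \Omega^2_W(1)) \to H^1(W, \Omega^3_G(2)|_W)$ is both $\GL(V_5)_\omega$-equivariant and nonzero; once these points are settled, the equivariance argument identifies the image line as $V_1$ with no further work.
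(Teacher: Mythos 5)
Your proof is correct and follows essentially the same route as the paper: the same two conormal-type exact sequences for $W\subset G$, the same restriction sequences on $G$, and Bott's theorem giving the acyclicity of $\Omega^2_G(1)$ and $\Omega^3_G(1)$ and the identification $H^q(G,\Omega^3_G(2))\simeq\delta_{q,1}V_5$. The only cosmetic difference is that you pin down the line $V_1\subset V_5$ by an explicit equivariance/irreducibility argument, where the paper directly identifies $H^q(W,\Omega^2_W(1))\simeq\delta_{q,1}V_1$.
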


\begin{proof} From the normal exact sequence 
of the embedding ${G^\omega}\subset G $, we deduce the exact sequences
\begin{eqnarray}
&0\to \Omega_{G^\omega}^1\to\Omega_G^2(1)\vert_{G^\omega}\to\Omega_{G^\omega}^2(1)\to 0&\label{equ1}\\
&0\to \Omega_{G^\omega}^2(1)\to\Omega_G^3(2)\vert_{G^\omega}\to\Omega_{G^\omega}^3(2)\to 0.&\label{equ2}
\end{eqnarray}
By Bott's theorem, $\Omega_G^2(1)$ is acyclic, so that we have
$$H^q({G^\omega}, \Omega_G^2(1)\vert_{G^\omega})\isom H^{q+1}(G,\Omega_G^2)\isom \delta_{q,1}\C^2.$$
On the other hand,  by (\ref{hodw5}), we have $H^q({G^\omega}, \Omega_{G^\omega}^1)\isom \delta_{q,1}\C$.
Therefore, we also get, by (\ref{equ1}), $H^q({G^\omega}, \Omega_{G^\omega}^2(1)) \simeq\delta_{q,1} {V_1^\omega}.$

By Bott's theorem again, $\Omega_G^3(1)$ is acyclic  hence, using (\ref{equ2}), we obtain
$$H^q({G^\omega},\Omega_G^3(2)\vert_{G^\omega})\isom H^q(G,\Omega_G^3(2))\isom\delta_{q,1}V_5.$$
This finishes the proof of the lemma. \end{proof}

\begin{theo}\label{sub}
For any fourfold $X$ of type $\cX_{10}$,   the local 
period map $p:\Def(X)\to \cQ$  is a submersion.
\end{theo}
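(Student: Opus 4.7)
The plan is to exhibit $dp$ as the cohomological connecting homomorphism of an explicit short exact sequence on $X$, then pin down its image by a dimension count based on Bott vanishing. Since $-K_X\lin 2H$ we have $T_X\isom\Omega^3_X(2)$; for $X$ of type $\cX^0_{10}$, combining this with the conormal sequence of $X=W\cap Q\subset W$ twisted by $\cO_X(2)$ yields
$$ 0 \lra \Omega^2_X \lra \Omega^3_W(2)|_X \lra T_X \lra 0. $$
Since $H^2(X,T_X)=0$ by Proposition \ref{hitx}, the long exact sequence produces a coboundary $\delta:H^1(X,T_X)\to H^2(X,\Omega^2_X)=H^{2,2}(X)$ with $\Coker(\delta)\isom H^2(X,\Omega^3_W(2)|_X)$. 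By Griffiths' classical identification of infinitesimal variations of Hodge structure with such coboundaries, $\delta$ agrees up to sign with the contraction $\mu_\omega$ by a generator $\omega\in H^{3,1}(X)=H^1(X,\Omega^3_X)$; the differential $dp$ is then $\delta$ read as a map into $T_{[\omega]}\cQ=H^{2,2}(X)_{\rm van}$.

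The image of $\delta$ lies in $H^{2,2}(X)_{\rm van}$: any $\alpha\in H^{2,2}(G)|_X$ is pulled back from $G$ and therefore stays of type $(2,2)$ under every first-order deformation, so $\mu_\alpha\equiv 0$; the self-adjointness of the IVHS with respect to the intersection form on $H^4(X)$ then forces $\mu_\omega(H^1(X,T_X))\perp H^{2,2}(G)|_X$. It remains to show $\dim\Im(\delta)\geq 20$, equivalently $\dim H^2(X,\Omega^3_W(2)|_X)\leq 2$. Apply the restriction sequence $0\to\Omega^3_W\to\Omega^3_W(2)\to\Omega^3_W(2)|_X\to 0$: by the Hodge-Tate structure of $W$ (cf.\ (\ref{hodw5})) one has $H^2(W,\Omega^3_W)=0$ and $H^3(W,\Omega^3_W)=H^{3,3}(W)\isom\C^2$, while the Bott-type argument from the proof of Lemma \ref{le41}, pushed through (\ref{equ1}) and (\ref{equ2}) into degrees $q=2,3$, gives $H^i(W,\Omega^3_W(2))=0$ for $i=2,3$. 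Hence $H^2(X,\Omega^3_W(2)|_X)\isom\C^2$, giving $\dim\Im(\delta)=22-2=20$. Combined with the previous step, $\Im(\delta)=H^{2,2}(X)_{\rm van}$, and $dp$ is surjective.

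The Gushel case is handled in parallel: one replaces $W$ by $W'=G\cap\P^7$ and builds the analogous sheaf sequence from the double cover $\pi_X:X\to W'$ using (\ref{xwd}), the corresponding Bott-type vanishings on the fourfold $W'$ playing the role of those on $W$; the same dimension count then goes through. The delicate point throughout is the IVHS self-adjointness step that confines $\Im(\delta)$ to the vanishing subspace; once this is granted, the remainder of the argument is a routine Bott computation on the ambient homogeneous variety.
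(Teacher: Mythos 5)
Your argument for $X$ of type $\cX^0_{10}$ is correct, and it is the mirror image of the paper's: both start from the same exact sequence $0\to\Omega^2_X\to\Omega^3_W(2)\vert_X\to T_X\to 0$ and identify the connecting map with the differential of the period map, but the paper computes the \emph{kernel} of $\delta$ (using $H^{2,1}(X)=0$ to get $\Ker(\delta)\isom H^1(X,\Omega^3_W(2)\vert_X)\isom H^1(W,\Omega^3_W(2))\isom V_5/V_1$, of dimension $4$), whereas you compute the \emph{cokernel} (using $H^2(X,T_X)=0$ to get $\Coker(\delta)\isom H^2(X,\Omega^3_W(2)\vert_X)\isom H^3(W,\Omega^3_W)\isom\C^2$). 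Your Bott-type vanishings $H^i(W,\Omega^3_W(2))=0$ for $i=2,3$ do follow from the sequences (\ref{equ1}) and (\ref{equ2}) exactly as you say, so the count $\dim\Im(\delta)=22-2=20$ is right. By rank--nullity the two computations carry the same information; the kernel route is slightly cleaner because it avoids your ``delicate'' step entirely: since $p$ is by construction a map into $\cQ\subset\P(\Lambda\ot\C)$, its differential automatically lands in $T\cQ\isom H^{2,2}(X)_{\rm van}$ (the algebraic classes $\sigma_{1,1}\vert_X,\sigma_2\vert_X$ are flat and stay of type $(2,2)$, so the derivative of the period stays orthogonal to them), and one only needs $\dim\Ker(T_{p,[X]})=24-20$. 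Your IVHS self-adjointness argument establishes the same containment and is fine, but it is the routine part, not the delicate one.

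The genuine gap is the Gushel case, which you dispatch in one sentence with a reduction that does not work as stated. If you ``replace $W$ by $W'=G\cap\P^7$'', there is no analogue of the conormal presentation $0\to\Omega^2_X\to\Omega^3_{W}(2)\vert_X\to T_X\to 0$, because $X$ is not a divisor in $W'$ --- it is a double cover of it --- and the sequence (\ref{xwd}), $0\to T_X\to\pi_X^*T_{W'}\to\cO_R(R)\to 0$, has the wrong shape to play this role: it does not present $T_X$ as a quotient of a restricted ambient sheaf by $\Omega^2_X$, so the coboundary you need does not appear. The paper's fix is to realize $X$ as a quadratic section of the $\P^1$-bundle $\P W'\to W'$ (the blow-up of the vertex of the cone over $W'$), run the same conormal argument with $\P W'$ in place of $W$, and compute the relevant cohomology of $\Omega^3_{\P W'}(2)\vert_X$ via the relative cotangent sequence (\ref{pw}) and the vanishings of \cite{dim}; this yields $\dim K\le 4$, and semicontinuity (not available to you a priori, since the rank of $dp$ could drop on the Gushel locus) then gives equality. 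Without some such construction of a genuine ambient fivefold containing $X$ as a hypersurface, your dimension count cannot even be set up in the Gushel case.
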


\begin{proof}The tangent map to $p$ at the point $[X]$ defined by $X$ has same kernel as the morphism 
\begin{eqnarray*}
 T: H^1(X,T_X)&\to&  \Hom ( H^{3,1}(X),H^{3,1}(X)^\bot/H^{3,1}(X))\\
  & \isom&\Hom ( H^1(X,\Omega^3_X),H^2(X,\Omega^2_X))
  \end{eqnarray*}
defined by the natural pairing $H^1(X,T_X)\otimes H^1(X,\Omega^3_X)\to
H^2(X,\Omega^2_X)$ (by (\ref{hodx4}), $ H^1(X,\Omega^3_X)$ is one-dimensional).  

 {\em Assume first that $X$ is {\em not} of Gushel type,} so that, keeping the notation above, $X$ is a smooth quadratic section of ${G^\omega}$.
Recall the 
 isomorphism  $T_X\isom \Omega_X^3(2)$. The normal exact sequence 
of the embedding $X\subset {G^\omega}$ yields the exact sequence
$0\ra \Omega_X^2\ra\Omega_{G^\omega}^3(2)\vert_X\ra T_X\ra 0$. 

Moreover, the induced coboundary map 
$$H^1(X,T_X)\ra H^2(X,\Omega_X^2)$$
coincides with the cup-product by a generator of $H^1(X,\Omega_X^3)\simeq\C$, hence is the morphism $T$.
 Since $H^{2,1}(X)=0$ (see (\ref{hodx4})), its kernel $K$ is isomorphic to $H^1(X,\Omega_{G^\omega}^3(2)\vert_X)$.
 
In order to compute this cohomology group, we consider the exact 
sequence 
  $0\ra \Omega_{G^\omega}^3\ra\Omega_{G^\omega}^3(2)\ra \Omega_{G^\omega}^3(2)\vert_X\to 0$. 
Since, by (\ref{hodw5}), we have $H^1({G^\omega},\Omega_{G^\omega}^3)=H^2({G^\omega},\Omega_{G^\omega}^3)=0$, we get  
$$K\isom H^1(X,\Omega_{G^\omega}^3(2)\vert_X)
\isom H^1({G^\omega},\Omega_{G^\omega}^3(2))\isom V_5/{V_1^\omega}$$ by  Lemma \ref{le41}. 
Since $\Def(X)$ is smooth of dimension 24 and $\cD$ is smooth of dimension 20, this concludes the proof of the theorem in this case.

  {\em Assume now that $X$ is  of Gushel type,} so we have a double cover $\pi_X:X\to W:=G\cap \P^7$ branched along the intersection of $W$ with a quadric.  We   consider $X$ as a subvariety of the blow-up $\P W$ of the vertex of the (projective) cone   over $W$. The  $\P^1$-bundle $\pi:\P W\to W$ is associated with $\cE=\cO_{W}\oplus \cO_{W}(1)$ (in Grothendieck's notation). We have $\omega_{\P {W}/{W}}\isom \pi^*\det (\cE)\otimes \cO_{\P {W}}(-2) $, hence $\pi_*\omega_{\P {W}/{W}}=0$, while  $R^1\pi_*\omega_{\P {W}/{W}}=\cO_{W}$ by Grothendieck's duality. The exact sequence
 \begin{equation}\label{pw}
 0\to \pi^*\Omega_{W}^3\to \Omega_{\P {W}}^3\to \pi^*\Omega_{W}^2\otimes \omega_{\P {W}/{W}}\to 0
 \end{equation}
   gives a long exact sequence$$\cdots\to H^i({W} , \Omega_{W}^3)\to H^i(\P {W} , \Omega_{\P {W}}^3)\to H^{i-1}({W} , \Omega_{W}^2)\to \cdots$$
   In particular, we obtain, using the fact that $h^{p,q}(W)=0$ for all $p\ne q$,
   $$H^1({\P {W}},\Omega_{\P {W}}^3)=H^2({\P {W}},\Omega_{\P {W}}^3)=0.$$
   As in the first case above, the kernel $K$ is therefore isomorphic to $H^1({\P {W}},\Omega_{\P {W}}^3\otimes\cO_{\P {W}}(2))$. Using (\ref{pw}) again, we obtain an exact sequence 
   $$H^1({W} , \Omega_{W}^3\otimes \Sym^2\cE)\to K\\\to H^1({W} , \Omega_{W}^2\otimes \det (\cE)).$$
   By \cite{dim}, Proposition 5.3, we have
   $$\begin{array}{rcccccl}H^1({W} , \Omega_{W}^3\otimes \Sym^2\cE)&=&H^1({W} , \Omega_{W}^3(2))&\isom&H^3({W} , \Omega_{W}^1(-2))^\vee,\\H^1({W} , \Omega_{W}^2\otimes \det (\cE))&=&H^1({W} , \Omega_{W}^2(1))&\isom&H^3({W} , \Omega_{W}^2(-1))^\vee\end{array}$$
   and these spaces are both 2-dimensional. In particular, $K$ has dimension at most 4. By semi-continuity, there is equality and the theorem is proved.\end{proof}

  If $X$ is of Gushel type, we may also consider, inside $\Def(X)$, the locus $\Def^G(X)$ where the deformation of $X$ remains  of Gushel type and the restriction 
  $$p^G:\Def^G(X)\to \cQ$$
  of the local period map.

 The tangent space to $\Def^G(X)$  at $[X]$ corresponds to the subspace of $H^1(X,T_X)$ where the Gushel involution acts trivially. One can describe it as follows: the inclusion $T_X\hra \pi_X^*T_W$ induces a map
 $$\begin{array}{rcccl}H^1(X,T_X)&\to& H^1(X, \pi_X^*T_W)&\isom& H^1(W,T_W)\oplus H^1(W,T_W(-1))\\&&&=& H^1(W,T_W(-1))\\&&&\isom&  H^1(W , \Omega_W^3(2))\end{array}$$
 which can be checked, using (\ref{xwd}), to be surjective. Its kernel   is the tangent space to the Gushel locus, and it follows from the proof above that the intersection of $K=\Ker(T_{p,[X]})$ with that space has dimension 2.

 \begin{prop}For any smooth $X$ of type $\cX^G_{10}$, the kernel of $T_{p^G,[X]}$ is 2-dimensional. In particular, $p^G$ is a submersion at $[X]$.\end{prop}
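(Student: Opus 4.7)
The plan is to use the Gushel involution $\sigma\colon X\to X$ to decompose the relevant spaces into $(\pm 1)$-eigenspaces, and then read off the desired dimension from the description of $K$ already given in the proof of Theorem~\ref{sub}. As noted just before the proposition, $T_{[X]}\Def^G(X)=H^1(X,T_X)^+$ (the $\sigma$-invariant subspace), and the surjective map $\rho\colon H^1(X,T_X)\twoheadrightarrow H^1(W',\Omega^3_{W'}(2))\isom L^\vee$ introduced there is essentially the projection onto $H^1(X,T_X)^-$. Since $p^G$ is by definition the restriction of $p$ to the submanifold $\Def^G(X)\subset\Def(X)$, its differential $T_{p^G,[X]}$ is the restriction of $T_{p,[X]}$ to $T_{[X]}\Def^G(X)$, and therefore
\begin{equation*}
\Ker(T_{p^G,[X]})=K\cap T_{[X]}\Def^G(X)=K^+,
\end{equation*}
where $K:=\Ker(T_{p,[X]})$ is the 4-dimensional kernel of Theorem~\ref{sub}. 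So the proposition reduces to showing $\dim K^+=2$.

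For this I would carry the $\sigma$-action through the identification $K\isom H^1(\P W',\Omega^3_{\P W'}(2))$ used in the proof of Theorem~\ref{sub}. The Gushel involution extends to $\P W'$ as the $\Z/2$-action $t\mapsto -t$ along the fibers of the $\P^1$-bundle $\pi\colon\P W'\to W'$, fixing the two distinguished sections and preserving $X$ (with quotient the branched double cover structure of $\pi_X$). In the short exact sequence
\begin{equation*}
0\to H^1(W',\Omega^3_{W'}(2))\to K\to H^1(W',\Omega^2_{W'}(1))\to 0
\end{equation*}
derived from (\ref{pw}), the left term comes from the $\pi$-pull-back part (concretely, from the invariant summand $\cO_{W'}(2)\subset\Sym^2\cE$) and is therefore $\sigma$-invariant, while the right term involves one factor of $\omega_{\P W'/W'}$, on which $\sigma$ acts by $-1$ (since the local generator $d(t/s)$ is anti-invariant), making it $\sigma$-anti-invariant. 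Thus this sequence is precisely the eigenspace decomposition $K=K^+\oplus K^-$, both summands of dimension $2$.

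Combining these steps, $\dim\Ker(T_{p^G,[X]})=\dim K^+=2$. Since $\dim T_{[X]}\Def^G(X)=22$ and $\dim\cQ=20$, the rank of $T_{p^G,[X]}$ is $22-2=20=\dim\cQ$, so $p^G$ is a submersion at $[X]$. The main obstacle is the $\sigma$-equivariance check in the second paragraph: one has to trace the $\Z/2$-action on $\P W'$ through each factor of (\ref{pw}) tensored with $\cO_{\P W'}(2)$ and identify the sign on each piece, the delicate point being that $\omega_{\P W'/W'}$ itself carries a non-trivial $(-1)$ under $\sigma$; everything else is then dimension bookkeeping from Theorem~\ref{sub} and the preceding discussion.
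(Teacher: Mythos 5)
Your argument is correct and is essentially the paper's own (very terse) proof: the paper identifies $T_{[X]}\Def^G(X)$ as the kernel of the surjection of $H^1(X,T_X)$ onto the 2-dimensional space $H^1(W',T_{W'}(-1))$ and then asserts that $K$ meets it in dimension 2 ``by the proof above,'' which is precisely your $\sigma$-eigenspace splitting of the exact sequence obtained from (\ref{pw}). Note that even if the global sign in your equivariance check were off (so that the roles of the sub and quotient as $K^+$ and $K^-$ were exchanged), the conclusion would be unaffected, since both pieces are 2-dimensional; the only essential point is that they carry opposite signs, which your observation about $\omega_{\P W'/W'}$ establishes.
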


The fact that the period map is dominant implies a Noether-Lefschetz-type result.

\begin{coro}\label{simple}
If $X$ is a very general fourfold   of type $\cX_{10}$, or is very general of type $\cX^G_{10}$, we have $ H^{2,2}(X) \cap H^4(X,\Q)=H^4(G,\Q)  $ and  the Hodge structure $H^4(X,\Q)_{\rm van}$ is simple.
\end{coro}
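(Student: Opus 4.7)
The plan is to use a standard Noether--Lefschetz argument, made possible by the submersivity of the local period map $p:\Def(X)\to\cQ$ (and of $p^G:\Def^G(X)\to\cQ$ in the Gushel case) established in Theorem~\ref{sub}.

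First I will set up the Noether--Lefschetz locus in $\cQ$. Each non-zero class $\lambda\in\Lambda$ gives rise to the proper analytic hypersurface $\{\omega\in\cQ\mid(\omega\cdot\lambda)=0\}$ in the 20-dimensional quadric $\cQ$, and the union of these hypersurfaces over $\lambda\ne 0$ is a countable union of proper analytic subsets of $\cQ$. Because $p$ (respectively $p^G$) is a submersion, its pull-back to $\Def(X)$ (respectively $\Def^G(X)$) is again a countable union of proper analytic hypersurfaces, so a very general $X$ of type $\cX_{10}$ (respectively $\cX_{10}^G$) has period point avoiding all of them. For such an $X$, any non-zero $\lambda\in H^{2,2}(X)\cap H^4(X,\Q)_{\rm van}$ would satisfy $(\lambda\cdot\omega)=0$ for $\omega$ a generator of $H^{3,1}(X)$, a contradiction; hence $H^{2,2}(X)\cap H^4(X,\Q)_{\rm van}=0$. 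Combined with the inclusion $H^4(G,\Q)\subset H^{2,2}(X)$ (Schubert classes being algebraic) and the orthogonal decomposition $H^4(X,\Q)=H^4(G,\Q)\oplus H^4(X,\Q)_{\rm van}$, this yields the first assertion.

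For the simplicity of $H^4(X,\Q)_{\rm van}$ I will exploit the fact that $\dim H^{3,1}(X)=1$ by (\ref{hodx4}). Let $V\subset H^4(X,\Q)_{\rm van}$ be a non-zero rational sub-Hodge structure. Either $V_\C\cap H^{3,1}(X)=0$, in which case complex conjugation also gives $V_\C\cap H^{1,3}(X)=0$, so that $V_\C\subset H^{2,2}(X)$ and the first part forces $V=0$, contradicting our assumption; or $H^{3,1}(X)\oplus H^{1,3}(X)\subset V_\C$, in which case the perfect duality between $H^{3,1}(X)$ and $H^{1,3}(X)$ under the intersection pairing shows that the orthogonal complement $V^\perp$ of $V$ in $H^4(X,\Q)_{\rm van}$ has $(V^\perp)_\C\subset H^{2,2}(X)$, so again the first part gives $V^\perp=0$ and $V=H^4(X,\Q)_{\rm van}$.

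The argument is essentially formal once Theorem~\ref{sub} is in hand; the only mildly delicate point is matching ``very general $X$ in the moduli stack'' with ``$X$ whose period point avoids the countable Noether--Lefschetz locus'', which follows from the openness of the period map (a consequence of submersivity) together with the irreducibility of $\cX_{10}$ and $\cX_{10}^G$.
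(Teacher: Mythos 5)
Your proof is correct and follows essentially the same route as the paper: the first assertion is obtained, exactly as in the paper, from the submersivity of the local period map (Theorem~\ref{sub}) combined with the countability of the Noether--Lefschetz hypersurfaces $\alpha^\bot\cap\cQ$. The only difference is in the simplicity statement, where the paper just cites Zarhin (\cite{zar}, Theorem 1.4.1) applied to the transcendental lattice, whereas you give the standard direct argument exploiting $h^{3,1}(X)=1$ and the nondegeneracy of the pairing on $H^4(X,\Q)_{\rm van}$; this is a complete and correct substitute for that citation.
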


\begin{proof}
For $ H^{2,2}(X)\cap H^4(X,\Q)_{\rm van}$ to be non-zero, the corresponding period must be in one of the (countably many) hypersurfaces    $\alpha^\bot \cap\cQ$, for some $\alpha \in \P(\Lambda\otimes\Q  )$. Since the local period map is dominant, this does not happen for $X$ very general (or very general of Gushel type). 

For any $X$, a standard argument (see, \eg, \cite{zar}, Theorem 1.4.1)
 shows that the transcendental lattice  $\bigl( H^4(X,\Z)_{\rm van}\cap H^{2,2}(X)\bigr)^\bot$ inherits a simple rational Hodge structure. For $X$ very general (or very general of Gushel type), the transcendental lattice is 
$H^4(X,\Z)_{\rm van}$.
\end{proof}

   \section{The period domain and the period map}\label{spp}

\subsection{The vanishing cohomology lattice}\label{s41}
Let $(L,\cdot)$ be a lattice; we denote by $L^\vee$ its dual $\Hom_\Z(L,\Z)$. The symmetric bilinear form    on $L$ defines an embedding $L\subset L^\vee$. The  {\em discriminant group}  is the finite abelian group $D(L):=L^\vee/L $; it is endowed with the symmetric bilinear form $b_L:D(L)\times D(L)\to \Q/\Z$ defined by 
$b_L([w],[w']):= w\cdot_\Q w'\pmod{  \Z}$ (\cite{nik}, \S1, 3$^{\rm o}$).  We define the {\em divisibility} $\div(w)$ of a non-zero element $w$ of $L$ as the positive generator of the ideal $w\cdot L\subset \Z$, so that $w/\div(w)$ is primitive in $L^\vee$. We set
  $w_*:=[w/\div(w)]\in D(L)$. If $w$ is primitive,  $\div(w)$ is the order of $w_*$ in $D(L)$.
  
  \begin{prop}
  Let $X$ be a   fourfold of type $\cX_{10}$. The {\em vanishing cohomology lattice}
 $ H^4(X ,\Z)_{\rm van} $
is even and has signature $(20,2)$ and discriminant group    $  (\Z/2\Z)^2$. It is isometric to
  \begin{equation}\label{lam}
\Lambda:= 2E_8\oplus 2U\oplus 2A_1.
\end{equation}
  \end{prop}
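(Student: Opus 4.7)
The plan is to identify the four basic invariants of $H^4(X,\Z)_{\mathrm{van}}$---rank, signature, discriminant form, and parity---match them with those of $\Lambda$, and conclude via Nikulin's uniqueness theorem for even indefinite lattices.

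First I compute the image sublattice $M$ of $H^4(G,\Z)\to H^4(X,\Z)$ via Schubert calculus on $G=G(2,V_5)$. In the non-Gushel case, $[X]_G=2H^2=2(\sigma_2+\sigma_{1,1})$, so for $\alpha,\beta\in H^4(G,\Z)$ we have $\alpha|_X\cdot\beta|_X=2\int_G\alpha\cup\beta\cup H^2$. Pieri's rule yields $\sigma_2^2=\sigma_{2,2}+\sigma_{3,1}$, $\sigma_{1,1}^2=\sigma_{2,2}$, $\sigma_2\sigma_{1,1}=\sigma_{3,1}$, and together with the Poincar\'e pairings on $G(2,5)$ one obtains the Gram matrix $\bigl(\begin{smallmatrix}4&2\\2&2\end{smallmatrix}\bigr)$. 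The change of basis to $(\sigma_2-\sigma_{1,1},\sigma_{1,1})$ diagonalises this to $\mathrm{diag}(2,2)$, so $M\cong 2A_1$: positive definite, even, with $\det M=4$ and $D(M)\cong(\Z/2\Z)^2$. A splitting-principle computation on $T_G=\cS^\vee\ot\cQ$ combined with the normal bundle sequence of $X\subset G$ gives $c_2(T_G)=11\sigma_2+12\sigma_{1,1}$, hence $c_2(X)=3\sigma_2|_X+4\sigma_{1,1}|_X\in M$. The Gushel case is analogous using the degree-$2$ projection formula.

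From the Hodge diamond (\ref{hodx4}) and Lefschetz hyperplane ($H^3(X,\Z)\isom H^3(G,\Z)=0$), $H^4(X,\Z)$ is torsion-free of rank $24$, hence unimodular by Poincar\'e duality. Hodge--Riemann bilinear relations on the Hodge decomposition give signature $(22,2)$, the negative part being $H^{3,1}\oplus H^{1,3}$. Since $M$ is positive definite, $H^4(X,\Z)_{\mathrm{van}}=M^\perp$ has rank $22$ and signature $(20,2)$. For parity, Wu's formula gives $v\cdot v\equiv v_4(X)\cdot v\pmod 2$ on $H^4(X,\Z)$; since $c_1(X)=2H$ is even, $w_2(X)=0$ and $v_4\equiv c_2(X)\pmod 2$, and because $c_2(X)\in M$ we have $v\cdot v\equiv c_2(X)\cdot v=0\pmod 2$ for $v\in M^\perp$. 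Thus $H^4(X,\Z)_{\mathrm{van}}$ is even.

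Primitivity of $M$ in $H^4(X,\Z)$ now follows formally: the saturation $\tilde M\supseteq M$ has index $d$ with $d^2\mid\det M=4$, so $d\in\{1,2\}$. If $d=2$, then $M^\perp=\tilde M^\perp$ would be a rank-$22$ unimodular lattice of signature $(20,2)$, which must be odd since $20\not\equiv 0\pmod 8$; this contradicts the evenness just established. Hence $d=1$ and $D(M^\perp)\cong D(M)=(\Z/2\Z)^2$ with bilinear form $(1/2,1/2)$. Among the four even quadratic refinements of this bilinear form, the Brown invariant constraint $\sigma(q)\equiv\mathrm{sig}(M^\perp)=18\equiv 2\pmod 8$ uniquely pins down $q=(1/2,1/2)$, which matches $q_\Lambda$. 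Nikulin's uniqueness theorem for even indefinite lattices (applicable since the rank $22$ greatly exceeds the number of discriminant generators) then yields the isometry $H^4(X,\Z)_{\mathrm{van}}\cong\Lambda$.

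The main technical input is the Chern class identity $c_2(X)=3\sigma_2|_X+4\sigma_{1,1}|_X\in M$, which simultaneously establishes evenness (through Wu's formula) and primitivity (through the dimension-count argument above). Without such an explicit integral representation, primitivity of $M$ would require a substantially more delicate Hodge-theoretic or monodromy argument to rule out extra integral $(2,2)$-classes outside $M$; once it is in place, the remaining steps (signature, discriminant, Brown invariant, Nikulin) are routine lattice theory.
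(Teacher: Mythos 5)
Your proof is correct and follows the same overall strategy as the paper's: compute the Gram matrix of the restricted Grassmannian classes, get the signature and discriminant group of the orthogonal complement from unimodularity of $H^4(X,\Z)$ and the Hodge diamond, prove evenness via Wu's formula together with $c_2(T_X)=3\sigma_2\vert_X+4\sigma_{1,1}\vert_X$ (which agrees with the paper's formula $4\sigma_1^2\vert_X-\sigma_2\vert_X$, cited there from Borel--Hirzebruch), and finish with a uniqueness theorem for indefinite even lattices. Two points where you genuinely add to, or deviate from, the paper are worth noting. First, the paper merely \emph{asserts} that the Grassmannian sublattice $M$ is primitive in $H^4(X,\Z)$; your argument --- the saturation index squares to a divisor of $\det M=4$, and index $2$ would make $M^{\perp}$ unimodular of signature $(20,2)$, hence odd, contradicting the evenness already established --- cleanly closes that gap (small slip: the parity obstruction is that $20-2=18\not\equiv 0\pmod 8$, not $20\not\equiv 0\pmod 8$; the conclusion is unaffected). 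Second, for the endgame the paper reads off the genus $II_{20,2}(2_I^2)$ from the Conway--Sloane table and quotes uniqueness in that genus, whereas you determine the discriminant \emph{quadratic} form among the refinements of the bilinear form by the Milgram/Brown signature formula and then apply Nikulin's uniqueness criterion ($\ell(D(M^{\perp}))=2\le \rank M^{\perp}-2$). This is arguably the more careful route, since Nikulin's Proposition 1.6.1 applied inside the odd unimodular lattice $I_{22,2}$ directly identifies only the discriminant group and bilinear form, not the quadratic refinement. Both endgames are valid, and your treatment of the Gushel case by specialization is also fine.
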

  
  \begin{proof} By (\ref{hodx4}), the Hodge structure on $H^4(X)$ has weight 2 and the  unimodular lattice $ \Lambda_X:= H^4(X ,\Z)$, endowed with the intersection form, has signature $(22,2)$. Since $22-2$ is not divisible by 8, this lattice   must be odd, 
  hence of type $22\langle1\rangle \oplus 2\langle-1\rangle $, often denoted by $I_{22,2}$  (\cite{ser}, Chap. V, \S2, cor. 1 of th. 2 and th. 4).

  The intersection form on $ \Lambda_G:=H^4(G(2,V_5),\Z)\vert_X$ has matrix $\begin{pmatrix}2&2\\2&4\end{pmatrix}$ in the basis $(\sigma_{1,1}\vert_X,\sigma_2\vert_X)$.
  It is of type $2\langle1\rangle$ and embeds as a primitive sublattice in $H^4(X ,\Z)$. 
The  vanishing cohomology lattice
$\Lambda_X^0:=H^4(X ,\Z)_{\rm van}:=\Lambda_G^\bot$
therefore has signature $(20,2)$ and    $D(\Lambda_X^0)\isom D(\Lambda_G)\isom (\Z/2\Z)^2$ (\cite{nik}, Proposition 1.6.1).

 An element $x$ of $I_{22,2}$ is {\em characteristic} if
  $$\forall y\in I_{22,2}\quad  x\cdot y\equiv y^2  \pmod{2}. $$ 
 The lattice $x^\bot$ is then even.
One has from \cite{bohi}, \S16.2,
  \begin{equation}\label{bh}
\begin{array}{rcl}
c_1(T_X)&=&2\sigma_1\vert_X,\\
c_2(T_X)&=&4\sigma_1^2\vert_X-\sigma_2\vert_X.
\end{array}
  \end{equation}
 Wu's formula  (\cite{wu}) then gives  
  \begin{equation}\label{wu}
\forall y\in \Lambda_X\quad y^2\equiv y\cdot (c_1^2+c_2)\equiv y\cdot \sigma_2\vert_X  \pmod{2}. 
  \end{equation} 
 In other words, $\sigma_2\vert_X$ is   characteristic, hence $\Lambda_X^0$ is an even lattice. As one can see from Table (15.4) in \cite{cs}, there is only one genus of even lattices with signature $(20,2)$ and discriminant   group  $(\Z/2\Z)^2$ (it is denoted by $II_{20,2}(2_I^2)$ in that table); moreover, there is only one isometry class in that genus (\cite{cs}, Theorem 21). In other words, any lattice with these characteristics, such as the one defined in (\ref{lam}), is isometric to $\Lambda_X^0$. \end{proof}

One can also check that $\Lambda$ is  the orthogonal in $ I_{22,2}$ of the lattice generated by the vectors
\begin{equation*}\label{uv}
u:=e_1+e_2
\quad{\rm and}\quad v':=e_1+\cdots+e_{22}-3f_1-3f_2
\end{equation*}
in the canonical basis $(e_1,\dots,e_{22},f_1,f_2)$ for $I_{22,2}$. Putting everything together, we see that there is an isometry $\gamma:\Lambda_X\isomto I_{22,2}$ such that
\begin{equation}\label{isog}\gamma(\sigma_{1,1}\vert_X)=u,\quad\gamma(\sigma_2\vert_X)=v',\quad \gamma(\Lambda^0_X)\isom\Lambda.
\end{equation}
We let $\Lambda_2\subset I_{22,2}$ be the rank-2 sublattice $\langle u,v'\rangle=\langle u,v\rangle$, where $v:=v'-u$. Then $u$ and $v$ both have divisibility 2, 
 $D(\Lambda_2)=\langle u_*,v_*\rangle$, and the matrix of $b_{\Lambda_2}$ associated with these generators is   $\begin{pmatrix}1/2&0\\0&1/2\end{pmatrix}$.

 \subsection{Lattice automorphisms}\label{s42}
One can   construct $I_{20,2}$  as an overlattice of $\Lambda$ as follows. Let   $e$ and $f$ be respective generators for the last two factors $ A_1$ of $\Lambda$ (see (\ref{lam})). They both have divisibility 2 and $D(\Lambda) \isom (\Z/2\Z)^2$, with generators $e_*$ and $f_*$; the form $b_\Lambda$ has matrix   $\begin{pmatrix}1/2&0\\0&1/2\end{pmatrix}$.  In particular,  $e_*+f_*$ is the only isotropic non-zero element in $D(\Lambda)$. By \cite{nik}, Proposition 1.4.1, this implies that there is a unique unimodular overlattice   of $\Lambda$. Since   there is just one isometry class of unimodular
lattices of signature $(20,2)$, this is  $ I_{20,2}$.

Note that 
 $\Lambda$ is an even sublattice of index 2 of $I_{20,2}$, so it
  is the maximal even sublattice $\{x \in I_{20,2} \mid  x^2\hbox{ even} \}$ (it is contained in that sublattice,
  and it has the same index in $I_{20,2}$).

Every automorphism of $I_{20,2}$ will preserve the maximal  even sublattice, so $O(I_{20,2}) $ is a subgroup of $  O(\Lambda)$. On the other hand, the group $O(
D(\Lambda))$ has order 2 and fixes $e_*+f_*$. It follows that
 every automorphism of $\Lambda$ fixes $I_{20,2}$, and we obtain 
  $O(I_{20,2}) \isom  O(\Lambda)$.

Now let us try to extend  to $I_{22,2}$ an automorphism $ \Id \oplus h $ of $\Lambda_2\oplus \Lambda$. Again, this automorphism  permutes the overlattices of    $\Lambda_2\oplus \Lambda $, such as $I_{22,2}$, according to its action on 
$D(\Lambda_2)\oplus D(\Lambda)$. By \cite{nik}, overlattices correspond to isotropic subgroups of $ D(\Lambda_2)\oplus D(\Lambda)$ that map injectively to both factors. Among them is $I_{22,2}$; after perhaps permuting $e$ and $f$, it corresponds to the (maximal isotropic) subgroup 
$$\{0,u_*+e_*,v_*+f_* , u_*+v_*+e_*+f_*\}.$$
 Any automorphism of $\Lambda$ leaves $e_*+f_*$ fixed. So either $h$ acts trivially on $D(\Lambda)$, in which case $\Id \oplus h $ leaves $I_{22,2}$ fixed hence extends to an automorphism of $I_{22,2}$; or $h$ switches the other two non-zero elements, in which case $ \Id \oplus h  $ does not extend  to $I_{22,2}$.

In other words, the image of the restriction map
 $$  \{g\in O(I_{22,2}) \mid  g\vert_{\Lambda_2}=  \Id  \}\hra O(\Lambda)$$
 is the {\em stable orthogonal group} 
 \begin{equation}\label{sog}
\widetilde O(\Lambda):=\Ker (O(\Lambda) \to O(D(\Lambda)).
\end{equation}
It has index 2 in $O(\Lambda) $ and a generator for the quotient is the involution $r\in O(\Lambda)$ 
 that exchanges $e$ and $f$ and is the identity on $\langle e,f\rangle^\bot$. Let $r_2$ be the involution of $\Lambda_2$ that exchanges $u$ and $v$. It follows from the discussion above that 
the involution $ r_2 \oplus r $ of $\Lambda_2\oplus \Lambda$ extends to an involution $r_I $ of $I_{22,2}$.

\subsection{The   period domain and the period map}\label{sec43}

Fix a lattice $\Lambda$ as in (\ref{lam}); it has signature $(20,2)$. The   manifold
 $$\Omega:=\{\omega\in \P(\Lambda\otimes\C)\mid   (\omega\cdot \omega) = 0\ , (\omega \cdot \bar \omega) < 0\} $$
is a homogeneous space for the real Lie group $\SO(\Lambda\otimes\R)\isom\SO(20,2)$. This group has two components, and one of them reverses the orientation on the negative definite part of $\Lambda \otimes\R$. It follows that $\Omega$ has two components, $\Omega^+$ and $\Omega^-$, 
 both isomorphic to the 20-dimensional open complex manifold $\SO_0(20,2)/\SO(20)\times \SO(2)$, a bounded symmetric domain of type IV.

 Let $\cU $ be a smooth (irreducible) quasi-projective variety parametrizing all  
  fourfolds of type $\cX_{10}$. Let $u$ be a general point of $\cU$ and let $X$ be the corresponding fourfold. 
 The group $\pi_1(\cU,u)$ acts on the lattice $\Lambda_X:=H^4(X,\Z)$ by isometries  and the image    $\Gamma_X$   of the morphism $\pi_1(\cU,u)\to O(\Lambda_X)$ is called the monodromy group.   The group $\Gamma_X$  
 is contained in the subgroup  (see (\ref{sog}))
 $$\widetilde O (\Lambda_X):=\{g\in O(\Lambda_X)\mid g\vert_{\Lambda_G}=\Id  \}.$$
 Choose an isometry $\gamma:\Lambda_X\isomto I_{22,2}$ satisfying (\ref{isog}). It induces an isomorphism $\widetilde O (\Lambda_X)\isom  \widetilde O (\Lambda )$. The group  $\widetilde O (\Lambda )$ acts  on the manifold $\Omega$ defined   above and, by a theorem of Baily and Borel, the quotient $\cD:=   \widetilde  O(\Lambda)\backslash \Omega$ has the structure of an irreducible quasi-projective variety.  One defines as usual a period map $ \cU\to \cD $   by sending a  variety   to its period; it is     an algebraic morphism.
 It descends to ``the'' period map
 $$ \wp: \cX_{10}\to \cD. $$ 
By Theorem \ref{sub}, $\wp$ is dominant with 4-dimensional smooth fibers {\em as a map of stacks.} 

\begin{rema}
As in the three-dimensional case (\cite{dim}), we do not know whether our fourfolds have a  coarse moduli space, even in the category of algebraic spaces. If such a space ${\mathbf X}_{10}$ exists, note however that it is {\em singular along the Gushel locus:}   any fourfold $X$ of Gushel type has a canonical involution; if  $X$ has no other non-trivial
   automorphisms, ${\mathbf X}_{10}$ is then locally around $[X]$  the product of a 22-dimensional germ and the germ of a surface node. The fiber of the period map $   {\mathbf X}_{10}\to \cD $ then  has multiplicity 2   along the surface corresponding to Gushel fourfolds.   \end{rema}

  \section{Special fourfolds}\label{sspe}
  
  Following \cite{has}, \S3, we say that a   fourfold $X$ of type $\cX_{10}$ is {\em special} if it contains a surface whose cohomology class ``does not come'' from $ G(2,V_5)$. Since the Hodge conjecture is true (over $\Q$) for Fano fourfolds (more generally,  by \cite{comu}, for all uniruled fourfolds), this is equivalent to saying that the rank of the (positive definite) lattice $H^{2,2}(X)\cap H^4(X,\Z)$ is at least 3.    The set of special fourfolds is sometimes called the Noether-Lefschetz locus (by Corollary \ref{simple}, a very general $X$ is not special).

    \subsection{Special loci}\label{sl}
   
For each primitive, positive definite, rank-3 sublattice $K\subset  I_{22,2}$   containing   the lattice $\Lambda_2$ defined at the end of \S\ref{s41}, we define an irreducible hypersurface of $\Omega^+$ by setting
$$\Omega_K:=\{\omega\in\Omega^+\mid K\subset \omega^\bot\}.$$
 A fourfold $X$ is {\em special} if and only if its period is in one of these (countably many)  hypersurfaces.  
We now investigate these lattices $K$.

\begin{lemm}\label{le61}
The discriminant $d$ of $K$ is positive and $d\equiv 0,2,{\rm or}\ 4\pmod{8}$.   \end{lemm}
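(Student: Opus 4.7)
My plan is to reduce the claim to a short determinant computation on an explicit $\Z$-basis of $K$, then extract the congruence modulo $8$ from Wu's formula applied to the third basis vector.

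The first step is to check that $\Lambda_2$ is primitive in $K$, which will license a basis of $K$ of the form $(u,v,w)$. Since $\Lambda_2\subset K\subset I_{22,2}$, it is enough to verify that $\Lambda_2$ is primitive in $I_{22,2}$: any $x\in K$ with some positive multiple in $\Lambda_2$ then lies in $K\cap(\Lambda_2\otimes\Q)\subset I_{22,2}\cap(\Lambda_2\otimes\Q)=\Lambda_2$. Primitivity of $\Lambda_2$ in $I_{22,2}$ is a direct check: writing $u=e_1+e_2$ and $v=v'-u=e_3+\cdots+e_{22}-3f_1-3f_2$, if $\alpha u+\beta v\in I_{22,2}$ for some $\alpha,\beta\in\Q$, the coefficients along $e_1$ and $e_3$ force $\alpha,\beta\in\Z$. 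Since $K$ is free of rank $3$ and $\Lambda_2$ is saturated of rank $2$ in $K$, the quotient $K/\Lambda_2$ is free cyclic, and a lift $w\in K$ of a generator completes $(u,v)$ to a $\Z$-basis $(u,v,w)$ of $K$.

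Setting $a:=u\cdot w$ and $b:=v\cdot w$, and using $u^2=v^2=2$, $u\cdot v=0$, the Gram matrix of $K$ in this basis is
$$G=\begin{pmatrix} 2 & 0 & a\\ 0 & 2 & b\\ a & b & w^2\end{pmatrix},$$
so $d=\det G=2(2w^2-a^2-b^2)$. Positivity of $d$ is immediate from positive-definiteness of $K$. For the congruence modulo $8$, I apply Wu's formula (\ref{wu}) to $w\in I_{22,2}$: since $v'=u+v$ is characteristic, $w^2\equiv w\cdot v'=a+b\pmod 2$. A three-case analysis on the parities of $a,b$ then gives everything. If both $a,b$ are even, $w^2$ is even and $2w^2-a^2-b^2\equiv 0\pmod 4$, whence $d\equiv 0\pmod 8$. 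If exactly one of $a,b$ is odd, $w^2$ is odd and $2w^2-a^2-b^2\equiv 2-1\equiv 1\pmod 4$, whence $d\equiv 2\pmod 8$. If both $a,b$ are odd, $w^2$ is even and $2w^2-a^2-b^2\equiv 0-2\equiv 2\pmod 4$, whence $d\equiv 4\pmod 8$.

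The proof is essentially one $3\times 3$ determinant once the basis is in place. The only delicate step is the verification of primitivity of $\Lambda_2$ in $K$: without it, one could not take $(u,v,w)$ to be a $\Z$-basis of $K$, and the determinant extracted from the matrix above would compute $d$ only up to a square factor, corrupting the parity analysis.
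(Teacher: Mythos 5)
Your proof is correct and follows essentially the same route as the paper's: complete $(u,v)$ to a basis of $K$, compute the $3\times 3$ determinant, and use the fact that $v'=u+v$ is characteristic (Wu's formula) to get $w^2\equiv a+b\pmod 2$, from which the congruence follows. The only difference is that you make explicit the primitivity of $\Lambda_2$ in $K$ (which the paper takes for granted) and run the three parity cases instead of stating the single congruence $d\equiv 2(a^2+b^2)\pmod 8$; both are fine.
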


\begin{proof}
Since $K$ is positive definite, $d$ must be positive. Completing the basis $(u,v)$ of $\Lambda_2$ from \S\ref{s41} to a basis of $K$, we see that the matrix of the intersection form in that basis is
 $\begin{pmatrix}2&0&a\\0&2&b\\a&b&c
   \end{pmatrix}$, whose determinant is $d=4c-2(a^2+b^2)$. By Wu's formula (\ref{wu}) (or equivalently, since $v$ is characteristic), we have $c\equiv a+b\pmod{2}$, hence $d\equiv 2(a^2+b^2)\pmod{8}$. This proves the lemma.\end{proof}
   
  We keep the notation of \S\ref{spp}.

   \begin{prop}\label{p53}
    Let $d$ be a positive integer such that $d\equiv 0,2,{\rm or}\ 4\pmod{8}$ and let $\cO_d$ be the set of  orbits for the action  of the group 
  $$\widetilde O(\Lambda ) = \{g\in O(I_{22,2})\mid g\vert_{\Lambda_2}=\Id\}\subset O (\Lambda ) $$
 on the set of primitive, positive definite,  rank-3, discriminant-$d$, sublattices $K\subset I_{22,2}$      containing    $\Lambda_2$. Then,  
 \begin{itemize}
\item[\rm a)] if $d\equiv 0 \pmod{8}$, $\cO_d$ has one element, and $K  \isom \begin{pmatrix}2&0&0\\0&2&0\\
0&0&d/4
   \end{pmatrix} ${\rm ;}
\item[\rm b)]  if $d\equiv 2 \pmod{8}$, $\cO_d$ has two elements, which are interchanged by the involution $r_I$ of $I_{22,2}$, and $K   \isom   \begin{pmatrix}2&0&0\\
0&2&1\\0&1&(d+2)/4
   \end{pmatrix} ${\rm ;}
\item[\rm c)] if $d\equiv 4 \pmod{8}$, $\cO_d$ has one element, and $K   \isom   \begin{pmatrix}2&0&1\\
0&2&1\\1&1&(d+4)/4
   \end{pmatrix} $.
\end{itemize} 
  \end{prop}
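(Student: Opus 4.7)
The plan is to classify such $K$ by choosing a third generator $w$ completing $(u,v)$ to a basis of $K$, normalizing it modulo $\Lambda_2$, and then converting the orbit problem into a transitivity problem for vectors in $\Lambda^\vee$ via the overlattice presentation of \S\ref{s42}.

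First, I would set $a=u\cdot w$, $b=v\cdot w$, $c=w^2$, so that the Gram matrix of $K$ in the basis $(u,v,w)$ is the one already written down in the proof of Lemma~\ref{le61} and $d=4c-2(a^2+b^2)$. The characteristic vector $v'=u+v$ together with Wu's formula (\ref{wu}) gives the congruence $a+b\equiv c\pmod 2$. Replacing $w$ by $w-\lfloor a/2\rfloor u-\lfloor b/2\rfloor v$, I may assume $(a,b)\in\{0,1\}^2$. The pair $(1,0)$ yields the same Gram matrix as $(0,1)$ after switching $u\leftrightarrow v$, so only three Gram matrices arise, precisely the ones displayed in (a), (b), (c), and matching $d\equiv 0,2,4\pmod 8$ respectively.

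For the orbit count I would pass from $K\subset I_{22,2}$ to a vector in $\Lambda^\vee$. Using the overlattice description $I_{22,2}=\langle\Lambda_2\oplus\Lambda,\tfrac{u+e}{2},\tfrac{v+f}{2}\rangle$ from \S\ref{s42}, any $w\in I_{22,2}$ can be written uniquely as $\alpha+\lambda+n_1\tfrac{u+e}{2}+n_2\tfrac{v+f}{2}$ with $\alpha\in\Lambda_2$, $\lambda\in\Lambda$, $n_i\in\{0,1\}$; the parities of $a,b$ then equal $n_1,n_2$. After Step~1, the associated element $\mu:=\lambda+\tfrac{n_1}{2}e+\tfrac{n_2}{2}f\in\Lambda^\vee$ has fixed norm $\mu^2=d/4$ and fixed class $\mu_*=n_1e_*+n_2f_*\in D(\Lambda)$. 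Since $\Lambda=2E_8\oplus 2U\oplus 2A_1$ contains two hyperbolic planes, Eichler's criterion shows that $\widetilde O(\Lambda)$ acts transitively on primitive elements of $\Lambda^\vee$ with fixed norm and fixed class in $D(\Lambda)$. An element of $\widetilde O(\Lambda)$ acts trivially on $D(\Lambda)$, so it extends by the identity on $\Lambda_2$ to an isometry of $I_{22,2}$ (the gluing is preserved), giving an element of the group in the statement. Existence of a suitable $\mu$ for each admissible $d$ follows from the abundance of hyperbolic planes in $\Lambda$.

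The two-orbit phenomenon in case (b) then becomes transparent. When $d\equiv 2\pmod 8$, the class $\mu_*\in\{e_*,f_*\}\subset D(\Lambda)$ cannot be moved by $\widetilde O(\Lambda)$ (which acts trivially on $D(\Lambda)$), so the two choices give genuinely distinct orbits. They are exchanged by the involution $r_I\in O(I_{22,2})$ of \S\ref{s42}, which swaps $u\leftrightarrow v$ and $e\leftrightarrow f$. In cases (a) and (c) the relevant class ($0$ or $e_*+f_*$) is fixed by every isometry of $D(\Lambda)$, so only one orbit occurs. The main obstacle in this plan is the bookkeeping at the interface between $K\subset I_{22,2}$ and $\mu\in\Lambda^\vee$: matching the parities $(a,b)$ to the class $\mu_*$ via the specific gluing subgroup, relating primitivity of $K$ in $I_{22,2}$ to the appropriate primitivity condition on $\mu$, and checking that the Eichler isometries used indeed lie in the stable orthogonal group. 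The conceptual core, however, is just Eichler's transitivity applied to $\Lambda$.
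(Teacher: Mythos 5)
Your proposal is correct and follows essentially the same route as the paper: reduce the classification of such $K$ to the classification of a single vector orthogonal to $\Lambda_2$ and invoke Eichler's criterion for $\widetilde O(\Lambda)$, with the two orbits in case b) coming from the classes $e_*,f_*\in D(\Lambda)$ being fixed by $\widetilde O(\Lambda)$ but swapped by $r_I$. The only cosmetic difference is that you package the data as the primitive vector $\mu=w/\div(w)\in\Lambda^\vee$ with $\mu^2=d/4$, whereas the paper works with the primitive generator $w$ of $K\cap\Lambda$ and splits into the cases $\div(w)=1$ and $\div(w)=2$ — these are the same reduction, and the gluing bookkeeping you defer is exactly the content of \S\ref{s42} used in the paper's proof.
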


In case b), one orbit is characterized by the properties $K\cdot u=\Z$ and $K\cdot v=2\Z$, and the other by 
$K\cdot u=2\Z$ and $K\cdot v= \Z$.

\begin{proof}
By a theorem of Eichler (see, \eg, \cite{ghs}, Lemma 3.5), the $ \widetilde O (\Lambda )$-orbit of a primitive vector $w$ in the even lattice $\Lambda$ is determined by its length $w^2$ and  its class $w_*\in D(\Lambda)$.

If $\div(w)=1$, we have $w_*=0$    and the  orbit is determined by $w^2$. The lattice
$\Lambda_2\oplus \Z w$ is primitive: if $\alpha u+\beta v+\gamma w=m w'$, and if $w\cdot w''=1$, we obtain $\gamma=mw'\cdot w''$, hence $\alpha u+\beta v=m((w'\cdot w'')w-w')$ and $m$ divides $\alpha$, $\beta$, and $\gamma$. Its discriminant is $4w^2\equiv 0\pmod{8}$.

If $\div(w)=2$, we have  $w_*\in\{ e_*,f_*,e_*+f_*\}$. Recall    from \S\ref{s42} that $\frac12(u+e)$, $\frac12(v+f)$, and $\frac12(u+v+e+f)$ are all in $I_{22,2}$. It follows that exactly one of $\frac12(u+w)$, $\frac12(v+w)$, and $\frac12(u+v+w)$  is   in $I_{22,2}$, and $ \Lambda_2\oplus \Z w $ has index 2 in its saturation   $K$ in $I_{22,2}$. In particular, $K$ has discriminant $w^2$. If $w_*\in\{ e_*,f_*\}$, this is $\equiv 2\pmod{8}$; if $w_*=e_*+f_*$, this is $\equiv 4\pmod{8}$.

Now if $K$ is   a lattice as in the statement of the proposition, we let $K^\bot$ be its orthogonal in $ I_{22,2}$, so that the rank-1 lattice $K^0:=K\cap\Lambda$ is the orthogonal of $K^\bot$ in $\Lambda$. From $K^0\subset \Lambda$, we can therefore recover $K^\bot$, then $K\supset \Lambda_2$. The preceding discussion applied to a generator $w$ of $ K^0$  gives the statement, except that we still have to prove that there are indeed elements $w$ of the various types for all $d$, \ie, we need construct elements in each orbit to show they are not empty.

Let $u_1,u_2$ be standard generators for a hyperbolic factor $U$ of $\Lambda$. For any   integer $m$, set $w_m:= u_1+mu_2$.
we have $w_m^2=2m$ and $\div(w_m)=1$. The lattice $\Lambda_2\oplus \Z w_m$ is saturated with  discriminant $8m$.

We have $(e+2w_m)^2=8m+2$ and $\div(e+2w_m)=2$. The saturation  of the lattice $\Lambda_2\oplus \Z (e+2w_m)$ has discriminant $d=8m+2$, and similarly upon replacing $e$ with $f$ (same $d$) or $e+f$ ($d=8m+4$). \end{proof}

Let $K$ be a  lattice   as above. The image in $\cD=   \widetilde  O(\Lambda)\backslash \Omega$ of the hypersurface $\Omega_K\subset  \Omega^+
 $ depends only on the $\widetilde O(\Lambda)$-orbit of $K$. Also, the involution $r\in O(\Lambda) $ induces a non-trivial involution $r_\cD$ of $\cD$.
 
 \begin{coro}
 The periods of the special fourfolds  of discriminant $d$ are contained in
  \begin{itemize}
\item[\rm a)] if $d\equiv 0 \pmod{4}$, an   irreducible hypersurface   $\cD_d\subset \cD$;
\item[\rm b)]  if $d\equiv 2 \pmod{8}$, the union of two irreducible hypersurfaces $\cD'_d$ and $\cD''_d$, which are interchanged by the involution $r_\cD$.\end{itemize} 
 \end{coro}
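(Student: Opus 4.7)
The plan is to derive the corollary as a direct consequence of Proposition \ref{p53}, combined with a careful analysis of how the sublattices $K$ produce hypersurfaces in $\Omega^+$ and how these descend to $\cD$.

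First, I would unwind the definition of ``special of discriminant $d$''. A special fourfold $X$ contains a surface $S$ whose class is not in $\Lambda_G$, so the saturation of $\Lambda_G+\Z[S]$ inside $H^4(X,\Z)$ has rank $\ge 3$; via the isometry $\gamma$ of (\ref{isog}), any primitive rank-$3$ piece of this saturation corresponds to a primitive, positive definite, rank-$3$ sublattice $K\subset I_{22,2}$ containing $\Lambda_2=\gamma(\Lambda_G)$. Positive definiteness comes from the fact that $K\subset H^{2,2}(X)\cap H^4(X,\Z)$, on which the intersection form is positive definite (by the Hodge--Riemann relations applied to the K3-type structure of $H^4(X,\Z)_{\rm van}$). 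The discriminant $d$ of $X$ is by definition the discriminant of $K$, and by Lemma \ref{le61} it lies in $\{0,2,4\}\pmod 8$. The period of $X$ then lies in the locus $\Omega_K\subset\Omega^+$.

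Second, I would check that each $\Omega_K$ is an irreducible hypersurface in $\Omega^+$. Since $\omega\in\Lambda\ot\C$ and $\Lambda_2\perp \Lambda$ inside $I_{22,2}\ot\Q$, the element $\omega$ is automatically orthogonal to $\Lambda_2\subset K$; thus the condition $K\subset\omega^\bot$ reduces to orthogonality against the image of $K$ under the projection $I_{22,2}\ot\Q\to \Lambda\ot\Q$, which is a single rank-$1$ $\Q$-subspace. Hence $\Omega_K$ is cut out of the $20$-dimensional irreducible domain $\Omega^+$ by a single non-trivial linear equation and is therefore an irreducible hypersurface; its image in $\cD$ is an irreducible hypersurface as well. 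Moreover, the image depends only on the $\widetilde O(\Lambda)$-orbit of $K$: indeed, the restriction map from \S\ref{s42} identifies $\widetilde O(\Lambda)$ with $\{g\in O(I_{22,2})\mid g|_{\Lambda_2}=\Id\}$, and this group acts transitively on an orbit by construction.

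Finally, I would conclude by invoking Proposition \ref{p53}. When $d\equiv 0\pmod 8$ (case a)) or $d\equiv 4\pmod 8$ (case c)), there is a unique $\widetilde O(\Lambda)$-orbit of admissible $K$, so the image in $\cD$ is a single irreducible hypersurface $\cD_d$; these two subcases together cover exactly $d\equiv 0\pmod 4$ within the admissible residues mod $8$. When $d\equiv 2\pmod 8$ (case b)), there are exactly two orbits, giving two irreducible hypersurfaces $\cD'_d,\cD''_d\subset\cD$. The involution $r_I$ of $I_{22,2}$ swaps the two orbits and restricts on $\Lambda$ to the involution $r\in O(\Lambda)\setminus\widetilde O(\Lambda)$ which descends to $r_\cD$; therefore $r_\cD$ interchanges $\cD'_d$ and $\cD''_d$.

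The argument is essentially bookkeeping, so there is no serious obstacle; the one point that needs care is verifying that in case b) the two orbits really do yield two \emph{distinct} hypersurfaces in $\cD$ and not one. This follows because the orbits are distinguished by the invariants $K\cdot u$ and $K\cdot v$ (as noted after Proposition \ref{p53}), which are preserved by any element of $\widetilde O(\Lambda)$ (since such elements fix $u$ and $v$), so the orbits cannot be glued together when passing to the quotient $\cD=\widetilde O(\Lambda)\backslash\Omega$.
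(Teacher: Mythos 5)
Your proposal is correct and follows essentially the same route as the paper, which states the corollary as an immediate consequence of Proposition \ref{p53} together with the two observations preceding it (that the image of $\Omega_K$ in $\cD$ depends only on the $\widetilde O(\Lambda)$-orbit of $K$, and that $r$ induces the involution $r_\cD$). Your added checks — that $\Omega_K$ is an irreducible hypersurface cut out by the rank-one lattice $K\cap\Lambda$, and that the two orbits in case b) yield genuinely distinct hypersurfaces because the invariants $K\cdot u$ and $K\cdot v$ are preserved by $\widetilde O(\Lambda)$ — are exactly the details the paper leaves implicit.
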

 
Assume $ d\equiv 2 \pmod{8}$  (case b)). Then, $\cD'_d$ (resp. $\cD''_d$) corresponds to lattices $K$ with $K\cdot u=\Z$ (resp.  $K\cdot v=\Z$). In other words, given a fourfold $X$ of type $\cX_{10}$ whose period point is in $\cD_d=\cD'_d\cup \cD''_d$, it is in $\cD'_d$ if $K\cdot \sigma_1^2 \subset 2\Z$, and it is in $\cD''_d$ if $K\cdot \sigma_{1,1} \subset 2\Z$.

\begin{rema}\label{rem}
Zarhin's  argument, already used in the proof of Corollary \ref{simple}, proves that if $X$ is a fourfold   whose period is very general in any given $\cD_d$, the lattice   $K= H^4(X,\Z) \cap H^{2,2}(X)$ has rank exactly 3 and the rational  Hodge structure $K^\bot\otimes\Q$ is simple.
\end{rema}

\subsection{Associated K3 surface}

As we will see in the next section, K3 surfaces often occur in the geometric description of special fourfolds $X$ of type $\cX_{10}$. This is related to the fact that, for some values of $d$, the   non-special cohomology of $X$ looks like the primitive cohomology of a K3 surface.
      
Following   \cite{has}, we determine, in each case of Proposition \ref{p53},  the discriminant group   of the {\em non-special lattice} $K^\bot $ and the symmetric form $b_{K^\bot}=-b_K$. We then find all cases when   the non-special lattice of $X$ is isomorphic (with a change of sign)  to the primitive cohomology lattice of a (pseudo-polarized, degree-$d$) K3 surface.
 Although this property is only    lattice-theoretic, the surjectivity of the period map for K3 surfaces then produces an actual K3 surface, which is said to be ``associated with $X$.'' For $d=10$, we will see  in \S\ref{splane} and \S\ref{qua}  geometrical constructions of the associated K3 surface.

Finally, there are other cases where geometry provides an ``associated'' K3 surface $S$ (see \S\ref{sno}), but not in the sense  considered here: the Hodge structure of $S$ is only isogeneous to that of the fourfold. So there might be integers $d$ not in the list provided by the proposition below,  for which   special fourfolds of discriminant $d$ are still related to K3 surfaces (of degree different from $d$).

   \begin{prop}\label{p54}
    Let $d$ be a positive integer such that $d\equiv 0,2,{\rm or}\ 4\pmod{8}$ and let $(X,K)$ be a special fourfold of type $\cX_{10}$ with discriminant $d$. 
 Then,  
 \begin{itemize}
\item[\rm a)] if  $d\equiv 0 \pmod{8}$,  we have $  D(K^\bot)   \isom (\Z/2\Z)^2\times (\Z/(d/4)\Z)${\rm ;}
\item[\rm b)]  if $d\equiv 2 \pmod{8}$,  we have $D(K^\bot)   \isom  \Z/d\Z $ and we may choose this isomorphism so that $b_{K^\bot}(1,1)= -\frac{d+8}{2d}\pmod{\Z}${\rm ;}
 \item[\rm c)] if $d\equiv 4 \pmod{8}$,  we have $D(K^\bot)   \isom  \Z/d\Z $ and we may choose this isomorphism so that $b_{K^\bot}(1,1)= -\frac{d+2}{2d}\pmod{\Z}$.
\end{itemize} 
The lattice $K^\bot$ is isomorphic to the  opposite of the  primitive cohomology lattice of a pseudo-polarized K3 surface (necessarily of degree $d$) if and only if we are in case b) or c) and the only odd  primes that divide $d$ are $\equiv 1\pmod{4}$. 

In these cases, there exists a pseudo-polarized, degree-$d$, K3 surface $S$ such that the Hodge structure $H^2(S,\Z)^0(-1)$ is isomorphic to $K^\bot$. Moreover, if the period point of $X$ is not in $\cD_8$, the pseudo-polarization is a polarization.
 \end{prop}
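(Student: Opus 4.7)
The proof of parts (a), (b), (c) reduces to a direct computation, based on two principles. First, since $K$ is primitively embedded in the unimodular lattice $I_{22,2}$, Nikulin's Proposition~1.6.1 in \cite{nik} gives a canonical isomorphism $D(K^\bot)\isomto D(K)$ reversing the sign of the discriminant bilinear form; so it suffices to compute $D(K)$ and $b_K$ from the Gram matrices listed in Proposition~\ref{p53}. In case (a), $K=\Lambda_2\oplus\langle d/4\rangle$ is an orthogonal sum, giving $D(K)\isom(\Z/2\Z)^2\oplus\Z/(d/4)\Z$; sign reversal yields (a). In cases (b) and (c) the Gram matrix has determinant $d$, and Smith normal form---combined with the parity assumption that $d/2$ (resp.\ $d/4$) is odd, so $\gcd(2,d/2)=1$ (resp.\ $\gcd(4,d/4)=1$)---gives $D(K)\isom\Z/d\Z$. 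Choosing a basis $(u,v,w)$ of $K$ extending the generators $u,v$ of $\Lambda_2$, and inverting the Gram matrix to get the dual basis $(u^*,v^*,w^*)$, a short verification shows: in case (b), $u^*+w^*$ generates $D(K)$ with $(u^*+w^*)^2=\tfrac12+\tfrac4d=\tfrac{d+8}{2d}\pmod{\Z}$; in case (c), $u^*$ generates $D(K)$ with $(u^*)^2=\tfrac{d+2}{2d}\pmod{\Z}$. Reversing sign on $D(K^\bot)$ gives the stated values of $b_{K^\bot}(1,1)$.

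For the K3 identification, recall that the primitive cohomology lattice $\Lambda_{K3,d}$ of a pseudo-polarized K3 of degree $d$ has signature $(2,19)$, discriminant group $\Z/d\Z$, and bilinear form $b(1,1)=-1/d\pmod{\Z}$. By \cite{nik}, Corollary~1.13.3, an indefinite even lattice of signature $(2,19)$ with cyclic discriminant group of order $d$ is determined up to isometry by its discriminant form, since the rank $21$ exceeds the length of the discriminant group by at least $2$. Hence $-K^\bot\isom\Lambda_{K3,d}$ iff their discriminant forms agree as $\Q/\Z$-valued forms on $\Z/d\Z$; this is the main obstacle. Case (a) fails at once: $D(K^\bot)$ is not cyclic. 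In cases (b) and (c) we seek $k$ coprime to $d$ with $k^2\cdot b_K(1,1)\equiv-1/d\pmod{\Z}$; clearing the denominator $2d$ and using that $k$ must be odd (as $d$ is even), the $\pmod{4}$ (resp.\ $\pmod{8}$) part is automatic, and the condition reduces to $(2k)^2\equiv-1\pmod{d/2}$ in case (b), and to $k^2\equiv-1\pmod{d/4}$ in case (c). Since $d/2$ (resp.\ $d/4$) is odd and shares its odd prime divisors with $d$, the Chinese remainder theorem and the classical criterion that $-1$ is a square modulo an odd prime $p$ iff $p\equiv 1\pmod{4}$ show that a solution exists iff every odd prime dividing $d$ is $\equiv 1\pmod{4}$.

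Existence of the K3 surface itself then follows from surjectivity of the period map for K3 surfaces. Given an isometry $-K^\bot\isomto\Lambda_{K3,d}$, transporting the Hodge structure of K3 type on $K^\bot$ (coming from $H^4(X,\C)$) and extending trivially by placing the class $H$ of square $d$ in the $(1,1)$-part of the full K3 lattice, one obtains a Hodge structure of K3 type realized, by Kulikov--Persson--Pinkham, as the Hodge structure of a marked K3 surface $S$. The class $H$ is integral, algebraic, and of positive square, hence---after a finite number of $(-2)$-reflections---nef and big, \ie, a pseudo-polarization. For $H$ to be ample, one must exclude $(-2)$-curves $C$ on $S$ with $C\cdot H=0$, which correspond to algebraic classes $\delta\in K^\bot$ with $\delta^2=2$ in $I_{22,2}$. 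A lattice-theoretic analysis of the saturation of $\Lambda_2\oplus\Z\delta$ in $I_{22,2}$, parallel to Hassett's treatment in \cite{has}, shows that such a $\delta$ forces the period of $X$ into $\cD_8$; the hypothesis therefore guarantees that $H$ is a polarization.
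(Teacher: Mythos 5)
Your proof follows essentially the same route as the paper's: Nikulin's duality $(D(K^\bot),b_{K^\bot})\isom (D(K),-b_K)$ for a primitive sublattice of the unimodular $I_{22,2}$, explicit inversion of the Gram matrices of Proposition \ref{p53} to exhibit a generator of $D(K)$ and its square, reduction of the comparison with the degree-$d$ K3 primitive lattice to the condition that $-1$ be a square modulo the odd part of $d$, surjectivity of the K3 period map for existence, and the square-$2$ algebraic class in $H^4(X,\Z)_{\rm van}$ as the obstruction to the pseudo-polarization being a polarization. The differences are cosmetic (Smith normal form in place of the paper's direct identification of a generator, and an explicit appeal to Nikulin's Corollary 1.13.3 where the paper leaves the genus-uniqueness implicit); the only point worth noting is that Nikulin's uniqueness criterion is stated for the $\Q/2\Z$-valued quadratic discriminant form while both you and the paper only match the $\Q/\Z$-valued bilinear forms --- a shared, fillable gap (fixed signature plus Milgram's formula pins down the quadratic refinement) --- and that the surjectivity of the K3 period map is due to Todorov--Looijenga rather than Kulikov--Persson--Pinkham.
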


The first values of $d$ that satisfy the conditions for the existence of an associated K3 surface are: 2, 4, 10, 20, 26, 34, 50, 52, 58, 68, 74, 82, 100...

\begin{proof} Since $I_{22,2}$ is unimodular, we have $(D({K^\bot}),b_{K^\bot})\isom (D(K),-b_K)$ (\cite{nik}, Proposition 1.6.1). Case a) follows from Proposition \ref{p53}.

Let $e$, $f$, and $g$ be the generators of $K$ corresponding to the matrix given in Proposition \ref{p53}. The matrix of $b_{K^\bot}$ in the dual basis $(e^\vee,f^\vee,g^\vee)$ of $K^\bot$
 is the inverse of that matrix.

In case b), one checks that $e^\vee+g^\vee $ generates $D(K)$, which is isomorphic to $\Z/d\Z$. Its square is $\frac12+\frac4d= \frac{d+8}{2d}$.

In case c), one
  checks that $e^\vee $ generates $D(K)$, which is isomorphic to $\Z/d\Z$. Its square is $   \frac{d+2}{2d}$.
  
  The opposite of the primitive cohomology lattice of a pseudo-polarized K3 surface  of degree $d$ has discriminant group $\Z/d\Z$ and the square of a generator is $\frac1d$. So case a) is impossible. 
    
  In case b), the   forms are conjugate if and only if $-\frac{d+8}{2d}\equiv \frac{n^2}{d} \pmod{\Z}$ for some   integer  $n$ prime to $d$, or $-\frac{d+8}{2}\equiv n^2  \pmod{d}$. Set $d=2d'$ (so that $d'\equiv 1\pmod{4}$); then this is equivalent to saying that $d'-4$ is a square  in the ring $\Z/d\Z$. Since $d'$ is odd, this ring is isomorphic to $\Z/2\Z\times \Z/d'\Z$, hence this is equivalent to asking that $-4$, or equivalently $-1$, is a square in $\Z/d'\Z$. This   happens if and only if the only odd primes that divide $d'$ (or $d$) are $\equiv 1\pmod{4}$.  
  
  In case c), the reasoning is similar: we need
 $-\frac{d+2}{2d}\equiv \frac{n^2}{d} \pmod{\Z}$ for some integer $n$ prime to $d$.  Set $d=4d'$, with $d'$ odd. This is equivalent to
  $-2\equiv   2n^2  \pmod{d'}$, and we conclude as above.
  
  As already explained, the existence of the polarized K3 surface $(S,f)$ follows from the surjectivity of the period map for K3 surfaces. Finally, if $\wp([X])$ is not in $\cD_8$, there are no classes of type $(2,2)$ with square 2 in 
 $H^4(X,Z)_{\rm van}$, hence no $(-2)$-curves on $S$ orthogonal to $f$, so $f$ is  
a polarization.
 \end{proof}

\subsection{Associated cubic fourfold}

Cubic fourfolds also sometimes occur in  the geometric description of special fourfolds $X$ of type $\cX_{10}$ (see \S\ref{rplane}). We determine   for which values of $d$   the   non-special cohomology of $X$  is isomorphic the non-special cohomology of a  special  cubic fourfold. Again, this is only a lattice-theoretic association, but the surjectivity of the period map for cubic   fourfolds then produces a (possibly singular) actual cubic.  We will see in \S\ref{rplane}   that   some  special fourfolds $X$ of discriminant 12 are actually birationally isomorphic to their associated  special cubic  fourfold.

      \begin{prop}\label{p55}
    Let $d$ be a positive integer such that $d\equiv 0,2,{\rm or}\ 4\pmod{8}$ and let $(X,K)$ be a special fourfold of type $\cX_{10}$ with discriminant $d$.
The lattice $K^\bot$ is isomorphic to the non-special    cohomology lattice of a (possibly singular) special cubic fourfold (necessarily of discriminant $d$) if and only if 
 \begin{itemize}
\item[\rm a)] either  $d\equiv 2\ {\it or}\ 20 \pmod{24}$,  and the only odd  primes that divide $d$ are $\equiv \pm1\pmod{12}${\rm ;}
\item[\rm b)]  or  $d\equiv 12\ {\it or}\ 66 \pmod{72}$,  and the only    primes $\ge 5$ that divide $d$ are $\equiv \pm1\pmod{12}$.
\end{itemize} 

In these cases, if moreover the period point of $X$ is general in $\cD_d$ and $d\ne 2$,   there exists a {\em smooth} special cubic fourfold   whose non-special Hodge structure   is isomorphic to $K^\bot$.  
 \end{prop}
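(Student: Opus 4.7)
The plan is to parallel Proposition \ref{p54}: compare the discriminant form of $K^\bot$, computed there, with the discriminant form of the non-special cohomology lattice $K_d^\bot$ of a labeled special cubic fourfold of discriminant $d$, and use Nikulin's theory to promote the form comparison to a lattice isomorphism. The input on the cubic side is Hassett's computation from \cite{has}: a special cubic fourfold of discriminant $d$ exists exactly when $d \equiv 0$ or $2 \pmod{6}$, and $K_d^\bot$ is an even lattice of signature $(19,2)$ whose discriminant form is cyclic of order $d$ with an explicit generator square when $d \equiv 2 \pmod 6$, and of the form $\Z/3\Z \oplus \Z/(d/3)\Z$ with an explicit form when $d \equiv 0 \pmod 6$.

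First I would match the discriminant groups. Case a) of Proposition \ref{p54}, where $D(K^\bot) \isom (\Z/2\Z)^2 \times \Z/(d/4)\Z$ has a non-cyclic $2$-part, is incompatible with both possible structures on the cubic side; hence $8 \nmid d$. In cases b) and c) of Proposition \ref{p54}, $D(K^\bot) \isom \Z/d\Z$ is cyclic, and matching with the cubic side forces either (i) $d \equiv 2 \pmod 6$, yielding via CRT combined with $d \equiv 2$ or $4 \pmod 8$ the congruence $d \equiv 2, 20 \pmod{24}$ of case a) of the proposition, or (ii) $d \equiv 0 \pmod 6$ with $9 \nmid d$ (so that $\Z/3\Z \oplus \Z/(d/3)\Z \isom \Z/d\Z$), yielding the congruence $d \equiv 12, 66 \pmod{72}$ of case b).

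Next I would compare the two quadratic forms on $\Z/d\Z$. Using the values $b(1,1) = -\frac{d+8}{2d}$ from case b) of Proposition \ref{p54} and $b(1,1) = -\frac{d+2}{2d}$ from case c), together with Hassett's form on the cubic side, $O(D)$-equivalence of the two forms reduces to a quadratic congruence of shape $-A \equiv B n^2 \pmod{d}$ with $n \in (\Z/d\Z)^\times$. Splitting by CRT and evaluating Legendre symbols, the local conditions at $2$ and $3$ are absorbed by the modular constraints $\pmod{24}$ or $\pmod{72}$ chosen above, while at each odd prime $p \ge 5$ dividing $d$ the condition reduces to $\left(\frac{3}{p}\right) = 1$, \ie\ $p \equiv \pm 1 \pmod{12}$. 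Once the discriminant forms agree, Nikulin's theorem (\cite{nik}, Corollary 1.13.3) shows that the genus of even lattices of signature $(19,2)$ with the given discriminant form contains a unique isometry class, so $K^\bot \isom K_d^\bot$ as abstract lattices.

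Finally, the existence of an actual smooth special cubic fourfold realizing $K^\bot$ as its non-special cohomology follows from the surjectivity of the period map for cubic fourfolds (Voisin's Torelli theorem combined with the description of its image). The exclusion $d \ne 2$ rules out the Hassett divisor corresponding to nodal cubics; when the period of $X$ is general in $\cD_d$, Remark \ref{rem} (applied on both sides) ensures that the algebraic lattice of the candidate cubic has rank exactly $2$, avoiding this divisor and forcing smoothness. The main obstacle is the careful $3$-adic bookkeeping in case (ii), where controlling the $3$-Sylow part of the discriminant form is what forces the refinement to $\pmod{72}$ rather than $\pmod{24}$ and the condition $9 \nmid d$.
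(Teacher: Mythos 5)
Your proposal follows the paper's proof essentially step for step: combine Hassett's existence condition $d\equiv 0,2\pmod 6$ with Lemma \ref{le61}, exclude $d\equiv 0\pmod 8$ by comparing discriminant groups (non-cyclic $2$-part on the $\cX_{10}$ side versus Hassett's Proposition 3.2.5 on the cubic side), reduce the comparison of discriminant forms to a quadratic congruence solved prime by prime via quadratic reciprocity to get the condition $p\equiv\pm1\pmod{12}$, and obtain smoothness of the associated cubic from genericity in $\cD_d$ together with Laza's description of the image of the cubic period map as the complement of $\cC_2\cup\cC_6$. The paper leaves the appeal to uniqueness in the genus implicit; making Nikulin's Corollary 1.13.3 explicit, as you do, is a legitimate completion of that step.

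One correction to the bookkeeping in your case (ii): the discriminant-group matching alone ($d\equiv 0\pmod 6$, $9\nmid d$, $d\equiv 2$ or $4\pmod 8$) yields $d\equiv 12, 42, 60, 66\pmod{72}$, not just $12$ and $66$. The classes $42$ and $60$ are eliminated only by the $3$-part of the discriminant-form comparison: in the paper's notation $d=24d'+e$, for $e=12$ the congruence modulo $3$ forces $3\mid d'$, and for $e=18$ it forces $d'\equiv 2\pmod 3$. Your closing sentence correctly locates the difficulty in the $3$-adic bookkeeping, but the earlier sentence deriving the mod-$72$ congruence attributes it to the group matching; as written, the local condition at $3$ cannot be ``absorbed by the modular constraints chosen above,'' since it is precisely what produces them, so it must actually be computed rather than assumed.
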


The first values of $d$ that satisfy the conditions for the existence of an associated cubic fourfold are: 2, 12,   26,  44, 
66,  74, 92, 122, 138, 146, 156, 194...

\begin{proof}Recall from \cite{has}, \S4.3, that  (possibly singular) special cubic fourfolds of positive discriminant $d $ exist for $d\equiv 0\ {\rm or}\ 2\pmod{6}$  (for $d=2$, the associated cubic fourfold is the  (singular) determinantal cubic; for $d=6$, it is nodal). Combining that condition with that of Lemma \ref{le61}, we obtain the necessary condition  $d\equiv 0,2,8, 12,18,20\pmod{24}$. Write
  $d=24d'+e$, with $e\in\{0,2,8, 12,18,20\}$.

 Then, one needs  to check whether the discriminant forms are isomorphic. Recall from \cite{has}, Proposition 3.2.5, that the discriminant group of the non-special lattice of a special cubic fourfold  of discriminant $d$ is isomorphic to 
 $(\Z/3\Z)\times(\Z/(d/3)\Z)$ if $d\equiv 0\pmod6$, and to $\Z/d\Z$ if $d\equiv 2\pmod6$. This excludes $e=0$ or 8; for $e=12$, we need $d'\not\equiv 1\pmod3$, and for $e=18$, we need $d'\not\equiv 0\pmod3$. In all these cases, the discriminant group is cyclic.
 
 When $e=2$,   the discriminant forms are conjugate if and only if $-\frac{d+8}{2d}\equiv n^2\frac{2d-1}{3d} \pmod{\Z}$ for some integer  $n$ prime to $d$ (Proposition \ref{p54} and \cite{has}, Proposition 3.2.5), or equivalently, since 3 is invertible modulo $d$, if and only if $ \frac{d}{2}+12\equiv 3\frac{d+8}{2}\equiv n^2  \pmod{d}$. This is equivalent to saying that $12d'+13$ is a square in $\Z/d\Z\isom (\Z/(12d'+1)\Z)\times(\Z/2\Z)$, or that 3 is a square in $\Z/(12d'+1)\Z$.
Using quadratic reciprocity, we see that this is equivalent to saying that the only odd primes that divide $d$ are $ \equiv \pm1\pmod{12}$.

When $e=20$, we need $-\frac{d+2}{2d}\equiv n^2\frac{2d-1}{3d} \pmod{\Z}$ for some integer  $n$ prime to $d$, 
or equivalently,   $ \frac{d}{2}+3\equiv  n^2  \pmod{d}$. Again, we get the same condition.

 When $e=12$, we need $9\!\!\not|\, d$ and $-\frac{d+2}{2d}\equiv n^2\left( \frac23-\frac{3}{ d}\right) \pmod{\Z}$ for some integer  $n$ prime to $d$, or equivalently $ -12d'-7 \equiv n^2(16d'+5)  \pmod{d}$. Modulo 3, we get that $1-d'$ must be a non-zero square, hence $3\mid d'$. Modulo 4, there are no conditions. Then we need  
 $  1  \equiv 3n^2   \pmod{2d'+1}$ and we conclude as above.
 
 Finally, when $e=18$, we need $9\!\!\not|\, d$ and $-\frac{d+8}{2d}\equiv n^2\left( \frac23-\frac{3}{ d}\right) \pmod{\Z}$ for some   $n$ prime to $d$, or equivalently $ -12d'-13 \equiv n^2(16d'+9)  \pmod{d}$. Modulo 3, we get   $d'\equiv 2\pmod3$, and then  $  4 \equiv 3n^2   \pmod{4d'+3}$ and we conclude as above.
 
At this point, we have a Hodge structure on $K^\bot$ which is, as a lattice, isomorphic to the non-special cohomology of a special cubic fourfold. It corresponds to a point in the period domain $\cC$ of cubic fourfolds. To make sure that it corresponds to a (then unique) smooth cubic fourfold, we need to check that it is not in the special loci $\cC_2\cup\cC_6$  (\cite{laza}, Theorem 1.1). If the period point of $X$ is general in $\cD_d$, the period point in $\cC$ is general in $\cC_d$, hence is not in $\cC_2\cup\cC_6$ if $d\notin\{2,6\}$.  \end{proof}

  \begin{rema}One can be more precise and   figure out explicit conditions on $\wp([X])$ for the associated cubic fourfold to be smooth,  but calculations are complicated: if $d=6e$ and we are in $\cC_6$, there is a   class $v$ with $v^2=2$ and $v\cdot h=0$, so we get a rank-3 lattice of $(2,2)$-classes with intersection matrix $\begin{pmatrix}3&0&0 \\ 0&2e&a\\0&a&2\end{pmatrix}$ (with $a^2<4e$) and the $(2,2)$-class $au-2ev$ is non-special, hence corresponds to a non-special class in $X$ with square $2e(4e-a^2)$; if we are in $\cC_2$, there is a   class $v$ with $v^2=6$ and $v\cdot h=0$, and we proceed similarly.    For example, when $d=12$, we find that it is enough to assume  $\wp([X])\notin\cD_2\cup\cD_4\cup \cD_8\cup \cD_{16}\cup \cD_{28}\cup \cD_{60}\cup\cD_{112}\cup\cD_{240}$.
 \end{rema} 

   \section{Examples of special   fourfolds}\label{exam}

   Assume that a fourfold $X$ of type $\cX_{10}$  contains a smooth surface $S$. Then, by (\ref{bh}),
   $$c (T_X)\vert_S= 1+2\sigma_1 \vert_S+ (4\sigma_1^2\vert_S-\sigma_2\vert_S)=c(T_S)c(N_{S/X}).
   $$
   This implies $c_1(T_S)+c_1(N_{S/X})=2\sigma_1 \vert_S$ and
   $$4\sigma_1^2\vert_S-\sigma_2\vert_S= c_1(T_S) c_1(N_{S/X})+c_2(T_S)+c_2(N_{S/X}).
   $$
   We obtain
$$
(S)_X^2=c_2(N_{S/X})=4\sigma_1^2\vert_S-\sigma_2\vert_S-c_1(T_S) (2\sigma_1 \vert_S-c_1(T_S))-c_2(T_S).
$$
Write  $[S]=a\sigma_{3,1}+b\sigma_{2,2}$ in $G(2,V_5)$. Using Noether's formula,
we obtain
\begin{equation}\label{S2}
(S)_X^2=3a+4b+2K_S\cdot \sigma_1\vert_S+2K^2_S- 12\chi(\cO_S).
\end{equation}
The determinant of the intersection matrix  in the basis $(\sigma_{1,1}\vert_X, \sigma_2\vert_X-\sigma_{1,1}\vert_X, [S])$ is then
\begin{equation}\label{S3}
d=4(S)^2_X-2(b^2+(a-b)^2).
\end{equation}
We remark that $\sigma_2\vert_X-\sigma_{1,1}\vert_X$ is the class of the unique $\sigma$-quadric surface $\Sigma_0$ contained in $X$  (see \S\ref{suni}).

  \subsection{Fourfolds containing a $\sigma$-plane (divisor $\cD''_{10}$)}\label{splane} 
  A $\sigma$-plane  is a 2-plane in $G(2,V_5)$  of the form $\P(V_1\wedge V_4)$; its class in $G(2,V_5)$ is $\sigma_{3,1}$. 
Fourfolds of type $\cX_{10}$ containing such a 2-plane were  already  studied by  Roth (\cite{rot}, \S4) and Prokhorov (\cite{pro}, \S3).
  
     \begin{prop}\label{prop71}
  Inside  $ \cX_{10}$, the  family $\cX_{\sigma\hbox{\tiny\rm -plane}}$ of fourfolds containing  a $\sigma$-plane is irreducible of codimension 2. The period map induces a dominant map $\cX_{\sigma\hbox{\tiny\rm -plane}}\to \cD''_{10}$ whose general fiber has dimension 3 and  is rationally  dominated by a $\P^1$-bundle over a degree-10 K3 surface.
  
  A general member of $\cX_{\sigma\hbox{\tiny\rm -plane}}$ is rational.
  \end{prop}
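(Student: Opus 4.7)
The plan is to address the four assertions of Proposition \ref{prop71} in sequence. For the codimension and irreducibility, I will parameterize pairs $(X,P)$ with $P\subset X$ a $\sigma$-plane. The $\sigma$-planes $P=\P(V_1\wedge V_4)\subset G(2,V_5)$ form the $7$-dimensional flag variety $F(1,4;V_5)$; the requirement $P\subset\P^8$ (equivalently $\omega|_{V_1\wedge V_4}=0$ for the skew form $\omega$ defining $\P^8$) cuts out an irreducible $4$-dimensional family of $\sigma$-planes in $W=G\cap\P^8$, parametrized generically by $V_1\ne V_1^\omega$ with $V_4=V_1^{\perp_\omega}$, and containing the $3$-dimensional locus $V_1=V_1^\omega$ as a divisor in its closure. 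For fixed $P$, the condition $Q|_P=0$ cuts out a $\P^{33}$ inside $|\cO_W(2)|=\P^{39}$, since the restriction map $H^0(W,\cO_W(2))\to H^0(P,\cO_P(2))\isom\C^6$ is surjective. Combined with $\dim\Aut(W)=15$, this gives $\dim\cX_{\sigma\hbox{\tiny\rm -plane}}=4+33-15=22$, i.e., codimension~$2$ in $\cX_{10}^0$; irreducibility follows from that of the parameter space.

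For the discriminant and dominance, I combine $[P]=\sigma_{3,1}|_X$ (so $a=1$, $b=0$) with the invariants $K_P^2=9$, $K_P\cdot\sigma_1|_P=-3$, $\chi(\cO_P)=1$ of $P\cong\P^2$; formulas (\ref{S2}) and (\ref{S3}) then yield $(P)_X^2=3$ and discriminant $d=10$. A Schubert computation gives $\sigma_{3,1}\cdot\sigma_{1,1}=0$ in $G(2,V_5)$, so $K\cdot\sigma_{1,1}\subset 2\Z$, placing the period in $\cD''_{10}$. For dominance of $\wp|_{\cX_{\sigma\hbox{\tiny\rm -plane}}}\to\cD''_{10}$, I will use Theorem \ref{sub}: since $\wp$ has smooth $4$-dimensional fibers, $\wp^{-1}(\cD''_{10})$ has codimension $1$ in $\cX_{10}$, and $\cX_{\sigma\hbox{\tiny\rm -plane}}$ sits as a codimension-$1$ subvariety inside it (a generic $X$ containing only finitely many $\sigma$-planes); hence the generic fiber of $\wp|_{\cX_{\sigma\hbox{\tiny\rm -plane}}}$ has dimension $4-1=3$, and its $19$-dimensional image fills $\cD''_{10}$.

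The associated K3 surface $S$ is furnished by Proposition \ref{p54}, which applies since $10\equiv 2\pmod{8}$ and $5\equiv 1\pmod{4}$: it comes equipped with an isomorphism $H^2(S,\Z)^0(-1)\isom K^\perp$. For the $\P^1$-bundle over $S$ dominating the period-fiber, my plan is to identify $S$ as a moduli of curves of a suitable class on $X$ adapted to $P$ (e.g., conics residual to $P$ in appropriate surface sections of $X$), the extra $\P^1$-parameter coming from the pencil of quadrics cutting $X$ inside $W$. For rationality, I will follow Roth and project $X$ from $P$ to the $\P^5$ of hyperplanes of $\P^8$ containing $P$. A Pl\"ucker-coordinate computation shows that $\pi_P|_W$ maps $W$ onto a smooth quadric fourfold $Y\subset\P^5$ (cut out by the Pl\"ucker relation of the residual $G(2,V_5/V_1^\omega)$) with generic $\P^1$-fibers, and the restriction $\pi_P|_X:X\dra Y$ is generically $2$-to-$1$. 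Since $Y\isom\Gr(2,4)$ is rational, $X$ is birational to a double cover of $\P^4$; identifying a rational section of this double cover, classically via an auxiliary surface in $X$ such as $\Sigma_0$ from Section \ref{suni}, then yields rationality of a general $X$.

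The principal obstacle is the explicit geometric realization of the K3 surface $S$ together with its $\P^1$-bundle dominating the period-fiber; once the projection from $P$ has been set up, the rationality argument reduces to the classical task of exhibiting a rational section of the double cover $X\dra Y$, while the remaining steps are dimension counts and straightforward formula applications.
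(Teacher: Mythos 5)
Your parameter count for irreducibility and codimension $2$ is correct (and more explicit than the paper, which just cites \cite{im}, Lemma 3.6), and your computation of $(P)^2_X=3$, $d=10$, and the identification of the component $\cD''_{10}$ via $\sigma_{3,1}\cdot\sigma_{1,1}=0$ match the paper exactly. But the second half of your proposal has two genuine problems.

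First, the projection $\pi_P:X\dra \P^5$ is \emph{not} generically $2$-to-$1$ onto the quadric $Y$: it is birational (this is Prokhorov's Proposition 2, which the paper invokes). Your heuristic is presumably that $W\dra Y$ has $\P^1$-fibers, so $X=W\cap Q$ should hit each fiber twice; but the residual curve $\ell_y=(\Pi_y\cap W)\moins P$ is a line lying in the $3$-plane $\Pi_y$, hence it \emph{meets the hyperplane} $P\subset \Pi_y$, and that intersection point automatically lies on $Q$ because $P\subset X\subset Q$. So one of the two points of $\ell_y\cap Q$ is absorbed by $P$ and only one survives in the fiber. Numerically, on $\widetilde X=\mathrm{Bl}_PX$ one has $H^4=10$, $H^3E=0$, $H^2E^2=-1$, $HE^3=1$, $E^4=2$, whence $(H-E)^4=2=\deg Y$, confirming birationality. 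Consequently $X$ is birational to a smooth quadric fourfold and rationality is immediate; your planned search for a rational section of a double cover addresses a situation that does not arise.

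Second, your dominance argument is circular. Knowing $\dim \cX_{\sigma\hbox{\tiny\rm -plane}}=22$ and $\wp(\cX_{\sigma\hbox{\tiny\rm -plane}})\subset \cD''_{10}$ with $\dim\cD''_{10}=19$ does not give dominance: a priori $\cX_{\sigma\hbox{\tiny\rm -plane}}$ could be a union of $4$-dimensional fibers of $\wp$, with $18$-dimensional image. The statement ``the generic fiber of $\wp\vert_{\cX_{\sigma\hbox{\tiny\rm -plane}}}$ has dimension $4-1=3$'' is what needs proof, and finiteness of the set of $\sigma$-planes on a single $X$ is irrelevant to it. The paper settles this, and simultaneously produces the $\P^1$-bundle over the K3 surface that you defer as the ``principal obstacle'', by running the birational construction backwards: the blown-down surface $\widetilde S\subset Y$ is the projection of a degree-$10$ K3 surface $S\subset\P^6$ from a point $p\in S$; starting from an \emph{arbitrary} such $S$, a point $p$, and a quadric in the pencil through $\widetilde S_p$, one reconstructs $X$, identifies $S$ with the K3 surface of Proposition \ref{p54} via the Hodge-structure decomposition of $H^4(\widetilde X,\Z)$, and concludes dominance from the surjectivity of the period map for degree-$10$ K3 surfaces. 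Your alternative plan (realizing $S$ as a moduli space of residual conics) is not carried out and would require substantial additional work.
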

  
    During the proof, we present an explicit geometrical construction of a general member $X$ of $\cX_{\sigma\hbox{\tiny\rm -plane}}$, starting  from a general degree-10 K3 surface $S\subset \P^6$,  a general point $p$ on $S$, and a smooth quadric $Y$ containing the projection $\widetilde S\subset \P^5$ from $p$.
     The birational isomorphism $Y\dra X$ is given by the linear system of cubics containing  $\widetilde S$.   
 
  \begin{proof}
  A parameter count (\cite{im}, Lemma 3.6) shows that $\cX_{\sigma\hbox{\tiny -plane}}$ is irreducible of codimension 2 in $\cX_{10} $. Let $P\subset X$ be a $\sigma$-plane. From (\ref{S2}), we obtain $(P)^2_X=3$, and from (\ref{S3}),  $d=10$. Since $\sigma_1^2\cdot P$ is odd, we are in $\cD''_{10}$.
   
For $X$ general in $\cX_{\sigma\hbox{\tiny -plane}}$ (see \cite{pro}, \S3, for the precise condition), the image of the  projection $\pi_P:X\dra \P^5$ from $P$ is a smooth quadric $Y\subset \P^5$   and, if $\widetilde X\to X$ is the blow-up of $P$, the projection $\pi_P$ induces a birational morphism $ \widetilde X\to Y$ which is the blow-up of a smooth degree-9 surface $\widetilde S$, itself  the blow-up of a smooth degree-10 K3 surface $S$  at one point (\cite{pro}, Proposition 2).

 Conversely, starting from a (general) degree-10 K3 surface $S\subset \P^6$, project it from a point on $S$ to obtain an embedding $\widetilde S\subset \P^5$. The surface $\widetilde S$ is then contained in a pencil of quadrics. For each such smooth quadric, one can reverse the construction above and produce a fourfold $X$ containing a $\sigma$-plane (we will go back in more details to this construction during the proof of Theorem \ref{th81}).

There are isomorphisms
 of polarized integral Hodge structures  
\begin{eqnarray*}
H^4(\widetilde X,\Z) &\isom& H^4(X,\Z)\oplus H^2(P,\Z)(-1)\\
&\isom& H^4(Y,\Z)\oplus H^2(\widetilde S,\Z)(-1)\\
&\isom& H^4(Y,\Z)\oplus H^2( S,\Z)(-1)\oplus \Z(-2).
\end{eqnarray*}
 For $S$ very general, the Hodge structure $H^2( S,\Q)_0 $ is simple, hence it is isomorphic to the non-special cohomology $K^\bot\otimes\Q$ (where $K$ is the  lattice spanned by $H^4(G(2,V_5),\Z)$ and $[P]$ in $H^4(X,\Z)$). Moreover, the lattice
 $H^2( S,\Z)_0(-1)$ embeds isometrically  into $K^\bot$. Since they both have rank 21 and discriminant 10, they are isomorphic. The surface
 $S$ is thus the (polarized) K3 surface associated with $X$ as in  Proposition \ref{p54}. 
 
 Since the period map for polarized degree-10 K3 surfaces is dominant onto their period domain, the period map for $\cX_{\sigma\hbox{\tiny\rm -plane}}$ is dominant onto $\cD''_{10}$ as well. Since the Torelli theorem for K3 surfaces holds, $S$ is determined by the period point of $X$, hence the fiber $\wp^{-1}([X])$ is rationally dominated the family of pairs $(p,Y)$, where $p\in S$ and $Y$ belongs to the pencil of quadrics in $\P^5$ containing $\widetilde S$.
 \end{proof}

With the notation above,  the inverse image  of the quadric $Y\subset\P^5$  by the projection $\P^8\dra \P^5$ from $P$ is a rank-6 non-Pl\"ucker quadric in $\P^8$ containing $X$, with vertex $P$. We will show in \S\ref{qdp}  that $\cX_{\sigma\hbox{\tiny -plane}}$ is contained in the    irreducible hypersurface of
  $\cX_{10}$  parametrizing the  fourfolds $X$ contained in such a quadric.

 \subsection{Fourfolds containing a $\rho$-plane (divisor $\cD_{12}$)}\label{rplane} A $\rho$-plane   is a 2-plane  in $ G(2,V_5)$  of the form $\P(\wedge^2V_3)$; its class in $G(2,V_5)$ is $\sigma_{2,2}$. Fourfolds of type $\cX_{10}$ containing such a 2-plane were  already  studied by  Roth (\cite{rot}, \S4).
 
   \begin{prop}\label{pd12}
  Inside $ \cX_{10}$, the  family $\cX_{\rho\hbox{\tiny\rm -plane}}$ of fourfolds containing  a $\rho$-plane is irreducible of codimension 3. The period map induces a dominant map $\cX_{\rho\hbox{\tiny\rm -plane}}\to \cD_{12}$ whose general fiber is   the   union of two rational surfaces.
  
   A general member of $\cX_{\rho\hbox{\tiny\rm -plane}}$ is birationally isomorphic to a cubic fourfold containing a smooth cubic surface scroll.
  \end{prop}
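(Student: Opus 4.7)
The plan mirrors the proof of Proposition~\ref{prop71}, with the $\rho$-plane playing the role of the $\sigma$-plane. For part~(1), the parameter count of \cite{im}, Lemma~3.6 applies: the family of $\rho$-planes in $G(2,V_5)$ is parametrized by the irreducible $6$-dimensional Grassmannian $G(3,V_5)$, and the incidence variety of pairs $(X,P)$ with $P\subset X$ a $\rho$-plane is irreducible; forgetting $P$ yields an irreducible family of codimension~$3$ in $\cX_{10}$.

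For part~(2), using~\eqref{S2} and~\eqref{S3} with $[P]=\sigma_{2,2}$ (so $a=0$, $b=1$), $K_P=-3\sigma_1|_P$, $K_P^2=9$, $\chi(\cO_P)=1$, and $K_P\cdot\sigma_1|_P=-3$, one gets $(P)^2_X = 4$ and $d = 4\cdot 4 - 2(1^2+1^2) = 12$. Since $12\equiv 4\pmod 8$, Proposition~\ref{p53}(c) produces a single $\widetilde O(\Lambda)$-orbit, so the restricted period map factors through the irreducible hypersurface $\cD_{12}\subset \cD$. Note that $d=12$ also falls in case~(b) of Proposition~\ref{p55}, so a general $X$ in $\cX_{\rho\hbox{\tiny\rm -plane}}$ admits an associated smooth cubic fourfold. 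Dominance onto $\cD_{12}$ then follows from Theorem~\ref{sub} together with the dimension count $\dim\cX_{\rho\hbox{\tiny\rm -plane}} = 21$ and $\dim\cD_{12} = 19$; the generic fiber has dimension~$2$. The identification of this fiber as a union of \emph{two} rational surfaces I defer to part~(3), where it will be read off from the description of smooth cubic scrolls in the associated cubic fourfold.

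For part~(3), let $\pi\colon\widetilde X\to X$ be the blowup of $P$, with exceptional divisor $E=\P(N_{P/X}^{\vee})$, a $\P^1$-bundle over $P\isom\P^2$. From the normal bundle sequence one obtains $c_1(N_{P/X})=-\sigma_1|_P$, while $c_2(N_{P/X}) = (P)^2_X = 4$; the standard intersection formulae on the blowup then yield $H^3E=0$, $H^2E^2=-1$, $HE^3=1$, $E^4=3$, and hence $(H-E)^4 = 3$. Since $h^0(\widetilde X, H-E) = 6$, the linear system $|H-E|$ realizes the projection from $\langle P\rangle\subset \P^8$ to $\P^5$; birationality follows by checking that a general $3$-plane in $\P^8$ containing $\langle P\rangle$ meets $X$ along $P$ together with a single reduced residual point, via a short residual intersection argument on $W = G(2,V_5)\cap\P^8$. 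The image is then a cubic fourfold $Y\subset \P^5$, which must agree with the associated cubic of Proposition~\ref{p55} by unicity. The image of $E$ under $|H-E|$ is a surface $T\subset Y$: the restriction $(H-E)|_E = \pi^{\ast}(\sigma_1|_P) + \zeta$ (with $\zeta$ the tautological class on $E$) has self-intersection $3$ and contracts a pencil of curves, presenting $T$ as a smooth rational normal cubic scroll in a hyperplane of $\P^5$.

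The main obstacle I foresee is the fiber description in~(2): proving that the generic fiber of the restricted period map is a union of exactly \emph{two} rational surfaces requires an explicit understanding of the Hilbert scheme of smooth cubic scrolls in a general cubic fourfold $Y$ of discriminant~$12$. One would combine the birational correspondence $(X,P)\leftrightarrow (Y,T)$ produced in~(3) with a direct analysis of this Hilbert scheme on the cubic fourfold side to single out the two components.
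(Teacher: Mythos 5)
Your overall strategy (parameter count, discriminant computation, projection from $P$ to a cubic fourfold) matches the paper's, but part (3) contains a genuine error that your own intersection numbers expose. You compute $H^3E=0$, $H^2E^2=-1$, $HE^3=1$, $E^4=3$, which give $(H-E)^3\cdot E=0+3+3-3=3\neq 0$; so the restriction of $|H-E|$ to the threefold $E$ has positive top self-intersection and cannot contract $E$ to a surface. Indeed, the image of $E$ is not the cubic scroll but the (singular) cubic \emph{threefold} $Y_0=Y\cap\langle T\rangle$, a hyperplane section of $Y$. The cubic scroll $T$ arises elsewhere: the birational morphism $\widetilde\pi_P\colon\widetilde X\to Y$ is the blow-up of the smooth surface $T$, and $T$ is the image of (the strict transform of) $X\cap\Sigma_P$, where $\Sigma_P=\{V_2\mid V_2\cap V_3\neq 0\}$ is the Schubert divisor attached to the $\rho$-plane $P=\P(\wedge^2V_3)$ --- this is the locus contracted by $\widetilde\pi_P$, not the exceptional divisor of the blow-up of $P$. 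Establishing that $\widetilde\pi_P$ is a smooth blow-up requires checking that all its fibers have dimension $\leq 1$ (the paper does this as in Prokhorov's analysis), which your residual-intersection sketch does not address.

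Two further points. First, your dominance argument is incomplete: knowing $\dim\cX_{\rho\hbox{\tiny\rm -plane}}=21$ and $\dim\cD_{12}=19$ only tells you the generic fiber dimension \emph{if} the map is dominant; since the fibers of $\wp$ on all of $\cX_{10}$ are $4$-dimensional, a priori the image could be $18$-dimensional with $3$-dimensional fibers. The paper instead identifies $K^\bot$ with the non-special cohomology $\langle h^2,[T]\rangle^\bot$ of $Y$ (using simplicity of the Hodge structure, Remark \ref{rem}, plus a rank/discriminant comparison) and then invokes dominance of the period map for cubic fourfolds containing a cubic scroll. Second, the ``two rational surfaces'' in the fiber, which you defer, come from the known fact (\cite{hast}, \cite{hast2}, Example 7.12) that a general such cubic fourfold contains exactly two $\P^2$-families of smooth cubic scrolls; your proposed Hilbert-scheme analysis is the right idea but is precisely the missing ingredient rather than a routine verification.
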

  
    The proof presents a geometrical construction of a general member of $\cX_{\rho\hbox{\tiny\rm -plane}}$,  starting from any smooth cubic fourfold $Y\subset \P^5$ containing a smooth cubic surface scroll  $T$.
        The birational isomorphism $Y\dra X$ is given by the linear system of quadrics containing $T$.

  \begin{proof}
    A parameter count (\cite{im}, Lemma 3.6) shows that $\cX_{\rho\hbox{\tiny -plane}}$ is irreducible of  codimension 3 in $\cX_{10} $. Let $P=\P(\wedge^2V_3)\subset X$ be a $\rho$-plane. 
    From (\ref{S2}), we obtain $(P)^2_X=4$. From (\ref{S3}), we obtain $d=12$  and  we are in $\cD_{12}$.

As shown in \cite{rot}, \S4,  the image  of the projection $\pi_P:X\dra \P^5$ from $P$ is a cubic hypersurface $Y$ and   the image of the intersection of $X$ with the Schubert hypersurface 
$$\Sigma_P=\{  V_2\subset V_5\mid V_2\cap V_3\ne 0\}\subset G(2,V_5)$$
is a   cubic surface scroll $T$ (contained in $Y$). If $\widetilde X\to X$ is the blow-up of $P$, with exceptional divisor $E_P$, the projection $\pi_P$ induces a birational morphism $\widetilde \pi_P: \widetilde X\to Y$. One checks (with the same arguments as in \cite{pro}, \S3) that all fibers have dimension $\le 1$ hence that $\widetilde \pi_P$ is the blow-up of the smooth surface $T$. The image $ \widetilde \pi_P(E_P)$ is the (singular) hyperplane section   $Y_0:= Y\cap\langle T\rangle$.

Conversely,  a general cubic fourfold $Y$ containing a smooth cubic scroll  contains two families (each parametrized by $\P^2$) of such surfaces (see \cite{hast} and \cite{hast2}, Example 7.12). For each such smooth cubic scroll, one can reverse the construction above and produce a smooth fourfold $X$ containing a $\rho$-plane.

 As in \S\ref{splane},   there are isomorphisms
 of polarized integral Hodge structures  
$$H^4(\widetilde X,\Z) \isom H^4(X,\Z)\oplus H^2(P,\Z)(-1) \isom H^4(Y,\Z)\oplus H^2(T,\Z)(-1).$$ 
Let   $K$ be the  lattice spanned by $H^4(G(2,V_5),\Z)$ and $[P]$ in $H^4(X,\Z)$. For $X$ very general in $\cX_{\rho\hbox{\tiny\rm -plane}}$, the Hodge structure 
$K^\bot\otimes\Q$ is simple (Remark \ref{rem}), hence it is isomorphic to the Hodge structure $ \langle h^2,[T]\rangle^\bot \subset H^4(Y,\Q) $. Moreover, the lattices
 $ K^\bot $ and $ \langle h^2,[T]\rangle^\bot\subset H^4(Y,\Z) $, which both have   rank 21 and   discriminant 12 (see \cite{has}, \S4.1.1), are isomorphic. 
 This case fits into the setting of Proposition \ref{p55}: the special cubic fourfold $Y$ is associated with $X$. 
 
 Finally, since the period map for cubic fourfolds containing a cubic scroll surface is dominant onto the corresponding hypersurface in their period domain, the period map for $\cX_{\rho\hbox{\tiny\rm -plane}}$ is dominant onto $\cD_{12}$ as well. Since the Torelli theorem holds for cubic fourfolds (\cite{voi}), $Y$ is determined by the period point of $X$, hence the fiber $\wp^{-1}([X])$ is rationally dominated the family of smooth cubic scrolls contained in $Y$. It is therefore the union of two rational surfaces.
 \end{proof}

  With the notation above, let $V_4\subset V_5$ be a general hyperplane containing $V_3$. Then $G(2,V_4)\cap X$ is the union of $P$ and a cubic scroll surface.

   \begin{rema}\label{planf}
  {\em No   fourfold $X$ of type $\cX_{10}$ contains infinitely many 2-planes.}
Indeed, if there is a one-dimensional family of 2-planes contained in $X$, two general such 2-planes $P$ and $P'$ will be of the same type (\ie, both $\sigma$-planes, or both $\rho$-planes), and $P\cdot P'=P^2=3$ (for $\sigma$-planes, as seen in the proof of Proposition \ref{prop71}) or 4 (for $\rho$-planes, as seen in the proof of Proposition \ref{pd12}). But one checks that these two distinct 2-planes  meet  in either 0 or 1 points, except if they are $\sigma$-planes of   types $\P(V_1\wedge V_4)$ and $\P(V_1\wedge V'_4)$. For such a 2-plane $\P(V_1\wedge V_4)$  to be contained in $X$, we need $V_4\subset V_1^\bot$; if there are more than one, $V_1$ must be the kernel ${V_1^\omega}$ of the form $\omega$ and these 2-planes must be contained in the surface $\Sigma_0$ defined in \S\ref{suni}. So   this is a contradiction.
  \end{rema}

     \subsection{Fourfolds containing a $\tau$-quadric surface (divisor $\cD'_{10}$)}\label{qua}
A $\tau$-quadric surface in $ G(2,V_5)$  is a linear section of $G(2,V_4)$ (its general hyperplane sections   are $\tau $-conics); its class  in $G(2,V_5)$ is $\sigma_1^2\cdot \sigma_{1,1}=\sigma_{3,1}+\sigma_{2,2}$.

  \begin{prop}
  The closure $\overline \cX_{\tau\hbox{\tiny\rm -quadric}}\subset \cX_{10}$
  of the
    family  of fourfolds containing  a $\tau$-quadric surface is an irreducible component of $\wp^{-1}( \cD'_{10})$.
   The period map induces a dominant map $\cX_{\tau\hbox{\tiny\rm -quadric}}\to \cD'_{10}$ whose general fiber is birationally isomorphic to the quotient by an involution of the symmetric square of a degree-10 K3 surface.
   
    A general member of $\cX_{\tau\hbox{\tiny\rm -quadric}}$ is rational.
  \end{prop}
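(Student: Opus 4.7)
My approach is parallel to that of Proposition \ref{prop71}. I would begin by computing the discriminant. A smooth $\tau$-quadric $S$ is isomorphic to $\P^1\times\P^1$, embedded by the degree-$2$ polarization $\sigma_1\vert_S$, so $\chi(\cO_S)=1$, $K_S^2=8$, and $K_S\cdot\sigma_1\vert_S=-4$; with $[S]=\sigma_{3,1}+\sigma_{2,2}$ (so $a=b=1$), formulas (\ref{S2}) and (\ref{S3}) give $(S)^2_X=3$ and $d=10$. A short Pieri computation in $H^*(G(2,V_5))$ yields $\sigma_1^2\cdot[S]=2$ and $\sigma_{1,1}\cdot[S]=1$, so by the criterion at the end of \S\ref{sl} the period point of $X$ lies on $\cD'_{10}$ and not on $\cD''_{10}$. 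A parameter count in the spirit of \cite{im}, Lemma 3.6, then shows that $\cX_{\tau\hbox{\tiny\rm -quadric}}$ is irreducible of codimension $1$ in $\cX_{10}$; since by Theorem \ref{sub} the period map is smooth and dominant and $\cD'_{10}$ has codimension $1$ in $\cD$, this identifies $\overline\cX_{\tau\hbox{\tiny\rm -quadric}}$ as an irreducible component of $\wp^{-1}(\cD'_{10})$ dominating $\cD'_{10}$.

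Next, I would describe the fiber via an associated K3 surface. The identity $[X]\cdot\sigma_{1,1}=2\sigma_1^2\cdot\sigma_{1,1}=2(\sigma_{3,1}+\sigma_{2,2})=2[S]$ in $G(2,V_5)$ shows that $X\cap G(2,V_4)=S\cup S'$ for a residual $\tau$-quadric $S'$, yielding a natural involution on the set of $\tau$-quadrics in $X$. Since $10\equiv 2\pmod 8$ and the only odd prime dividing $10$ is $5\equiv 1\pmod 4$, Proposition \ref{p54} produces a polarized degree-$10$ K3 surface $T$ whose primitive cohomology is Hodge-isomorphic to the non-special cohomology of $X$. To obtain the rational dominant map $\Sym^2(T)\dra$ fiber, I would reverse-engineer a geometric construction: starting from $(T,f)\subset\P^6$ and an unordered pair $\{p,q\}\in\Sym^2(T)$, project $T$ from the secant line $\langle p,q\rangle$ to a surface $\widetilde T\subset\P^4$ of degree $8$, and build a fourfold $X\in \cX_{\tau\hbox{\tiny\rm -quadric}}$ whose $\tau$-quadric corresponds to $\{p,q\}$ via an appropriate linear system of quadrics containing $\widetilde T$ --- a construction analogous to, but more elaborate than, the K3-plus-one-point setup in the proof of Proposition \ref{prop71}. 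The lattice-theoretic match together with the surjectivity of the period map for degree-$10$ K3 surfaces would then give both dominance onto $\cD'_{10}$ and the asserted rational dominant map.

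The principal obstacle is this geometric reconstruction. The dimension count ($\dim\Sym^2(T)=4$, equal to the expected fiber dimension $23-19$) and the Hodge-theoretic matching are routine, but producing a genuine rational dominant map from $\Sym^2(T)$, computing its degree, and distinguishing $\Sym^2(T)$ from nearby candidates (for instance a cover related to the involution $S\leftrightarrow S'$, or the Hilbert square of $T$) will require a delicate analysis using the residual involution and careful intersection theory on the projected surface $\widetilde T\subset \P^4$, since a $\tau$-quadric is geometrically less rigidly tied to its K3 than is a $\sigma$-plane.

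Finally, for rationality of a general member, I would project $X$ from the $3$-plane $\langle S\rangle\subset\P^8$: analyzing the resulting map $X\dra\P^4$ through the family of lines of $G(2,V_5)$ meeting $S$ (in the spirit of the proof of Proposition \ref{unir}) should exhibit $X$ birationally as a rational variety fibered in rational curves over $\P^4$, hence rational; the residual $\tau$-quadric $S'$ provides additional geometric control when verifying birationality of the inverse.
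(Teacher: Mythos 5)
Your overall strategy coincides with the paper's: the discriminant computation (via (\ref{S2}), (\ref{S3}) and the parity of $\sigma_1^2\cdot[S]$), the parameter count giving codimension $1$, the identification of the residual $\tau$-quadric $S^\star$ with $X\cap G(2,V_4)=S\cup S^\star$, the converse construction from a degree-$10$ K3 surface with a pair of points projected from a secant line into $\P^4$, and the rationality via projection from $\langle S\rangle$ are all exactly what the paper does. But the step you explicitly defer as ``the principal obstacle'' is precisely where essentially all of the paper's work lies, so as it stands the argument has a genuine gap in its middle. What the paper does there: the projection $\pi_{\langle S\rangle}$ is shown to be \emph{birational} onto $\P^4$ (on the blow-up $\widetilde X\to X$ of $S$ one computes $(H-E)^4=1$ from $H^4=10$, $H^2E^2=-2$, $E^4=(S)^2_X=3$); the strict transform of $S^\star$ is contracted to a point $s$; the cone of curves of $\widetilde X$ is spanned by the two contracted ray classes, so the classification of extremal contractions on smooth fourfolds (\cite{am}) identifies $\pi:\widetilde X\to\P^4$ as a divisorial contraction onto a surface $S_0$ with one non-normal point at $s$, with exceptional divisor $D\lin 3H-4E$. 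A further blow-up of $S^\star$ and of $s$ yields a Hodge-structure decomposition, and then explicit intersection-theoretic computations give $\deg S_0=8$, sectional genus $6$, $c_1^2(S_0')=-2$, $c_2(S_0')=26$, whence $S_0'$ is a degree-$10$ K3 surface blown up at two points. Only after this identification do the lattice comparison with Proposition \ref{p54} and the $23=19+4$ parameter count give the dominance onto $\cD'_{10}$ and the rational domination of the general fiber by $\Sym^2$ of the K3 surface (the construction is symmetric in the two points). None of this is routine, and your proposal does not supply a substitute for it.

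Two smaller corrections. The inverse birational map $\P^4\dra X$ is given by \emph{quartics} through $S_0$, not quadrics: from $\pi^*\cO_{\P^4}(1)=H-E$ and $D\lin 3H-4E$ one gets $H=4\,\pi^*\cO_{\P^4}(1)-D$. And your rationality argument is both overcomplicated and dimensionally off (``fibered in rational curves over $\P^4$'' would make $X$ five-dimensional); the correct and simpler statement is that the projection from $\langle S\rangle$ is itself birational onto $\P^4$ by the degree computation above --- an argument modeled on the lines through $\Sigma_0$ as in Proposition \ref{unir} would only reproduce the degree-$2$ unirational parametrization. Finally, be aware that the paper's remark following the proof shows the general fiber is birationally $S^{[2]}/\iota_S$ for a birational involution $\iota_S$ coming from the residual pair of points, so distinguishing $\Sym^2(S)$ from its quotient, as you anticipated, is indeed a real issue: the fiber is only \emph{rationally dominated} by $\Sym^2(S)$ (through $S\times S\dra S^{[2]}\dra S^{[2]}/\iota_S$), not birational to it.
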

  
    During the proof, we present a geometrical construction of a general member of $\cX_{\tau\hbox{\tiny\rm -quadric}}$,  starting from a general degree-10 K3 surface $S\subset \P^6$ and two general points  on $S$: if $S_0\subset \P^4$ is the (singular) projection of $S$ from these two points,     the birational isomorphism $\P^4\dra X$ is given by the linear system of quartics containing $S_0$.

\begin{proof}A parameter count shows that $\cX_{\tau\hbox{\tiny -quadric}}$ is irreducible of   codimension 1 in $\cX_{10}$ (one can also use the parameter count at the end of the proof).   Let $\Sigma\subset X$ be a smooth $\tau$-quadric surface.
From (\ref{S2}), we obtain $(\Sigma)^2_X=3$, and from (\ref{S3}), $d=10$. Since $\sigma_1^2\cdot \Sigma$ is even, we are in $\cD'_{10}$.
   The family  $\cX_{\tau\hbox{\tiny -quadric}}$ is therefore a component of the divisor $\wp^{-1}(\cD'_{10})$.
   
   The projection from the 3-plane $\langle \Sigma\rangle$ induces a birational map $X\dra \P^4$ (in particular, $X$ is rational!). If $\eps:\widetilde X\to X$ is the blow-up of $\Sigma $, one checks that it induces a birational {\em morphism} $\pi: \widetilde X\to \P^4$ which is more complicated than just the blow-up of a smooth surface (compare with \S\ref{splane}).
   
    Indeed, since $\Sigma$ is contained in a $G(2,V_4)$,  the quartic surface $X\cap G(2,V_4)$  is   the union of $\Sigma$ and another $\tau$-quadric surface $\Sigma^\star$. The two 3-planes $\langle \Sigma\rangle$  and $\langle \Sigma^\star\rangle$ meet along a 2-plane, hence (the strict transform of) $\Sigma^\star$ is contracted by $\pi$ to a point. Generically, the only quadric surfaces contained in $X$ are the $\sigma$-quadric surface $\Sigma_0$ (defined in \S\ref{suni}) and the $\tau$-quadric surfaces $\Sigma$ and   $\Sigma^\star$. Using the fact that $X$ is an intersection of quadrics, one checks that  $\Sigma^\star$ is the only surface contracted (to a point) by $\pi$.
    
Let $\ell'\subset \widetilde X$ be a line contracted by   $\eps$. If $\ell\subset \widetilde X$ is (the strict transform of) a line contained in $\Sigma^\star$, it meets $\Sigma$ and is contracted by $\pi$. Since $\widetilde X$ has Picard number 2, the   rays $\R^+[\ell]$ and $\R^+[\ell']$ are extremal hence span the cone of curves of $\widetilde X$. These two classes have $(-K_{\widetilde X})$-degree 1, hence  $\widetilde X$ is a Fano fourfold. 
Extremal contractions on smooth fourfolds have been classified   (\cite{am}, Theorem 4.1.3); in our case, we have:
\begin{itemize}
\item   $\pi$ is   a divisorial contraction,   its (irreducible) exceptional divisor $D$ contains  $\Sigma ^\star$, and  $D\lin 3H-4E$;\footnote{Use   $H^4=10$, $ H^3E=0$, $H^2E^2=-2$, $HE^3=0$, $ E^4=3$.} 
\item $S_0:= \pi(D)$ is a surface with a single singular point  $s:=\pi(\Sigma^\star)$, where it is locally the union of two smooth 2-dimensional germs meeting transversely;
\item outside of $s$, the map $\pi$ is the blow-up of $S_0$ in $\P^4$.
\end{itemize}
Let $\widehat X\to \widetilde X$ be the blow-up of $\Sigma^\star$, with exceptional divisor $\widehat E$, and let $\widehat \P^4\to \P^4$ be the blow-up of $s$, with exceptional divisor $\P^3_s$. The strict transform $\widehat S_0\subset \widehat \P^4$ of $S_0$ is the blow-up of its (smooth) normalization $S_0'$ at the two points lying over $s$ and meets $ \P^3_s$ along the disjoint union of the two exceptional curves $L_1$ and $L_2$. There is an induced morphism $\widehat X\to \widehat \P^4$ which is an extremal contraction (\cite{am}, Theorem 4.1.3) hence is the blow-up of the smooth surface $\widehat S_0$, with exceptional divisor the strict transform $\widehat D\subset \widehat X$ of $D$; it induces by restriction a morphism $\widehat E\to  \P^3_s$ which is the blow-up of $L_1\sqcup L_2$.

It follows that we have isomorphisms  of polarized Hodge structures  
\begin{eqnarray*}
H^4(\widehat X,\Z) &\isom& H^4(X,\Z)\oplus H^2(\Sigma,\Z)(-1)\oplus H^2(\Sigma^\star,\Z)(-1)\\
& \isom& H^4(  \P^4,\Z)\oplus H^2(  \P^3_s,\Z)(-1) \oplus \Z[L_1]\oplus  \Z[L_2]\oplus  H^2( S'_0,\Z)(-1).
\end{eqnarray*}
In particular, we have $b_2(S_0')=24+2+2-1-1-1-1=24$ and $h^{2,0}(S'_0)=h^{3,1}(\widehat X)=1$; moreover, the Picard number of $S'_0$ is 3 for $X$ general. The situation is as follows:
$$
\xymatrix
@C=20pt@M=6pt@R=6pt
{
\widehat D\ar@{_(->}[dr]\ar[rrrrrr]\ar[dddd] &&&&&&\widehat S_0\ar[dddd]^{\begin{matrix}\hbox{\tiny blow-up of}\\\hbox{\tiny two points}\end{matrix}}\ar@{^(->}[dll]\\
&\widehat X \ar[rrr]^{\hbox{\tiny blow-up of $\widehat S_0$}}\ar[dd]^{\hbox{\tiny blow-up of $\Sigma^\star$}}&&&\widehat \P^4\ar[dd]^{\hbox{\tiny blow-up  of $s$}}\\\\
&\widetilde X\ar[rrr]^\pi\ar '[d]^(.6){\hbox{\tiny blow-up of $\Sigma$}} [dd]&&& \P^4\\
D\ar@{^(->}[ur] \ar[rrrrr]&&&&&S_0\ar@{_(->}[ul]&S'_0\ar[l]\\
&X .}
$$
To compute the degree $d$ of $S_0$, we consider the (smooth) inverse image $P\subset \widetilde X$ of a 2-plane in $\P^4$. It is isomorphic to the blow-up of $\P^2$ at $d$ points, hence $K^2_P=9-d$. On the other hand, we have by adjunction
$$K_P\lin (K_{\widetilde X}+2(H-E))\vert_P\lin (-2H+E+2(H-E))\vert_P= -E\vert_P,$$
hence
$K_P^2=E^2(H-E)^2=1$ and $d=8$.

Consider now a general hyperplane $h\subset \P^4$. Its intersection with $S_0$ is a smooth connected curve $C$ of degree 8, and its inverse image in $\widetilde X$ is the blow-up of $h$ along $C$, with exceptional divisor its intersection with $D$. From \cite{ip}, Lemma 2.2.14, we obtain
$$D^3\cdot (H-E)=-2g(C)+2+K_h\cdot C=-2g(C)+2-4\deg(C)=-2g(C)-30,
$$
from which we get $g(C)=6$. In particular, $c_1(S'_0)\cdot h=2$. On the other hand, using a variant of the formula for smooth surfaces in $\P^4$, we obtain
$$d^2-2=10d+c_1^2(S'_0)-c_2(S'_0)+5c_1(S'_0)\cdot h ,
$$
hence $c_1^2(S'_0)-c_2(S'_0)=-28$. 
We can also use the formula from \cite{pro}, Lemma 2:

\begin{eqnarray*}
\widehat D^4&=&(c_2(\widehat \P^4)-c_1^2(\widehat \P^4))\cdot \widehat S_0+c_1(\widehat \P^4)\vert_{\widehat S_0}\cdot c_1(\widehat S_0)-c_2(\widehat S_0)\\
&=&(-15h^2-7[\P^3_s]^2)\cdot \widehat S_0+(-5h^2+3[\P^3_s])\vert_{\widehat S_0}\cdot c_1(\widehat S_0)-c_2(\widehat S_0)\\
&=&(-15h^2-7[\P^3_s]^2)\cdot \widehat S_0+(-5h^2+3[\P^3_s])\vert_{\widehat S_0}\cdot c_1(\widehat S_0)-c_2(\widehat S_0)\\
&=&-120+14-10-6-c_2(\widehat S_0).\end{eqnarray*}
Since $\widehat D^4=D^4=(3H-4E)^4= -150$, we obtain $c_2(\widehat S_0)= 28$, hence $c_2( S'_0)= 26$ and  $c_1^2(S'_0)=-2$. Noether's formula implies  $\chi(S'_0,\cO_{S'_0})=2$, hence $h^1(S'_0,\cO_{S'_0})=0$. The classification of surfaces implies that $S'_0$ is the blow-up at two points of a K3 surface $S$ of degree 10. By the simplicity argument used before, the integral polarized Hodge structures  $H^2( S,\Z)_0(-1)$ and $K^\bot$ are isomorphic:
$S$ is the    (polarized) K3 surface associated with $X$ via  Proposition \ref{p54}.

What happens if we start from the $\tau$-quadric $\Sigma^\star$ instead of $\Sigma$? Blowing-up $\Sigma$ and then the strict transform of $\Sigma^\star$ is not the same as doing it in the reverse order, but the 
end products have a common open subset $\widetilde X^0$ (whose complements have codimension 2). The morphisms $\widetilde X^0\to \widetilde\P^4
 \to \P^3$ (where the second morphism is induced by projection from $s$) are then the same, because they are induced by the projection of $X$ from the 4-plane $\langle \Sigma,\Sigma^\star\rangle$, and the locus where they are not smooth is
the common projection $S_1$ in $\P^3$ of the surfaces $S_0\subset \P^4$ and $S_0^\star\subset \P^4$ from their singular points.

This surface $S_1$ is also the projection of the K3 surface $S\subset \P^6$ from  the 2-plane spanned by  $p,p',q,q'$. The end result is therefore the same K3 surface $S$ (as it should, because its period is determined by that of $X$), but the pair of points is now $q,q'$. We let 
 $\iota_S$ denote the birational involution on $S^{[2]}$ defined by $p+p'\mapsto q+q'$ (in \cite{ogr}, Proposition 5.20, O'Grady proves that for $S$ general, the involution $\iota_S$ is biregular on the complement of  a 2-plane.).

Conversely, let $S=G(2,V_5)\cap Q'\cap \P^6$ be  a general K3 surface of degree 10 and let $p$ (corresponding to $V_2\subset V_5$) and $p'$ (corresponding to $V'_2\subset V_5$)  be two general points on $S$. If $V_4:=V_2\oplus V'_2$, the intersection $S\cap G(2,V_4)$ is a set of four points $p,p',q,q'$ in the 2-plane $\P(\wedge^2V_4)\cap\P^6$. Projecting $S$ from the line $pp'$ gives a non-normal degree-8 surface $S_0:=S_{pp'}\subset \P^4$, where $q$ and $q'$ have been identified. Its normalization $S'_0$ is the blow-up of $S$ at  $p$ and $p'$.
Now let $ \widehat \P^4\to \P^4$ be the blow-up of the singular point of $S_0$, and let $\widehat X\to \widehat \P^4$ be the blow-up of the strict transform of $S_0$ in $\widehat \P^4$. The strict transform in $\widehat X$ of the exceptional divisor $\P^3_s\subset  \widehat \P^4$ can be blown down by $\widehat X\to \widetilde X$.

The resulting smooth fourfold $\widetilde X$ is a Fano variety with Picard number 2. One extremal contraction is $\pi:\widetilde X\to\P^4$. The other extremal contraction gives the desired $X$. This construction depends on 
23 parameters (19 for the surface $S$ and 4 for  $p,p'\in S$).

All this implies (as in the proofs of Propositions \ref{prop71} and   \ref{pd12}) that the period  map for $\cX_{\rho\hbox{\tiny\rm -plane}}$ is dominant onto $\cD'_{10}$, with  fiber birationally isomorphic to  $S^{[2]}/\iota_S$.
 \end{proof}

   \begin{rema}
  {\em No   fourfold $X$ of type $\cX_{10}$ contains infinitely many quadric surfaces.} The proof is   more involved than in Remark \ref{planf}, but still elementary. Since $X$ contains a unique $\sigma$-quadric, we may assume (using Remark \ref{planf}) that $X$ contains   irreducible $\tau$-quadrics $\Sigma$ and $\Sigma^\star$, contained in $G(2,V_4)$, with $G(2,V_4)\cap X=\Sigma\cup \Sigma^\star$, that move in  positive-dimensional families. Assume $G(2,V'_4)\cap X=\Sigma'\cup \Sigma^{\prime\star}$, with $V_4\ne V'_4$. Since $\Sigma$ is a linear section of $G(2,V_4)$, the intersection  $\Sigma\cap \Sigma'$ is a linear section of $G(2,V_4)\cap G(2,V'_4)=G(2,V_4\cap V'_4)$, a 2-plane. Since $\Sigma$ is irreducible, it is therefore either a line, a point, or the empty set. If $\Sigma'$ is a deformation of $\Sigma$ in $X$, we have  $\Sigma\cdot \Sigma'=(\Sigma^2)_X=3$, hence the intersection $\Sigma\cap \Sigma'$ must be a line $L\subset G(2,V_4\cap V'_4)$. Similarly, since $ \Sigma^\star\cdot \Sigma'= (\sigma_{1,1}\vert_X- \Sigma)\cdot \Sigma'=-2$, the intersection $\Sigma^\star\cap \Sigma'$ is a line $L^\star\subset G(2,V_4\cap V'_4)$. We also have
\begin{eqnarray}
G(2,V_4\cap V'_4)\cap X&=&(\Sigma\cup \Sigma^\star)\cap (\Sigma'\cup \Sigma^{\prime\star})\label{eea}\\
&=&L\cup L^\star\cup (\Sigma\cap \Sigma^{\prime\star})\cup (\Sigma^\star\cap \Sigma^{\prime\star}).\nonumber
\end{eqnarray}
  {\em If we assume
  $L\ne L^\star$,} since the set in (\ref{eea}) is the intersection of the 2-plane $G(2,V_4\cap V'_4)$ with a hyperplane and a quadric,  it must be $L\cup L^\star$. In that case, since $L^\star\not\subset \Sigma$, we have $\Sigma\cap \Sigma^{\prime\star}=L$ and similarly, 
  $\Sigma^\star\cap \Sigma^{\prime\star}=L^\star$. Then, $\Sigma'\cap\Sigma= \Sigma^{\prime\star} \cap\Sigma$ so, by switching the roles of $\Sigma$ and $\Sigma'$, we obtain a contradiction. Hence,
  $$L=L^\star\subset \Sigma\cap\Sigma^\star\cap G(2,V_4\cap V'_4).$$
  The projective lines in $L$ then sweep out the 2-plane $\P(V_4\cap V'_4)\subset \P(V_5)$, which is therefore independent of $V'_4$, hence of $\Sigma'$; call it $\P(V_3)$. If $X$ contains infinitely many quadric surfaces, they must occur as components of $G(2,V'_4)\cap X$ for all $V_3\subset V'_4\subset V_5$. 
  The union
  $$\bigcup_{V_3\subset V'_4\subset V_5}G(2,V'_4)\cap X  $$
  is then the intersection of $X$ with  the Schubert cycle of lines that meet the fixed 2-plane $\P(V_3)$. This is a hyperplane section of $X$, which is irreducible by Lefschetz theorem. This implies that $\Sigma$ and $\Sigma^\star$ are interchanged by monodromy; but this is impossible since $(\Sigma^2)_X=3$, whereas $(\Sigma\cdot \Sigma^\star)_X=-2$.
 \end{rema}

     \subsection{Fourfolds containing a cubic scroll (divisor $\cD_{12}$)}\label{cub}
We consider rational cubic scroll surfaces obtained as smooth hyperplane sections of the image of a morphism  $\P(V_2)\times\P(V_3)\to G(2,V_5)$, where $V_5=V_2\oplus V_3$; their class  in $G(2,V_5)$ is  $\sigma_1^2\cdot\sigma_2=2\sigma_{3,1}+\sigma_{2,2}$.

     \begin{prop}
      The closure $\overline \cX_{\hbox{\tiny\rm cubic scroll}}\subset \cX_{10}$
  of the
    family  of fourfolds containing    a cubic scroll surface  is the irreducible component of $\wp^{-1}( \cD_{12})$ that contains the family  $\cX_{\rho\hbox{\tiny\rm -plane}}$.
  \end{prop}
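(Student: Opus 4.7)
The plan is to show that $\overline\cX_{\hbox{\tiny cubic scroll}}$ is an irreducible, codimension-$1$ subvariety of $\wp^{-1}(\cD_{12})$ which contains $\cX_{\rho\hbox{\tiny -plane}}$. Since $d=12\equiv 4\pmod 8$ corresponds to a single $\widetilde O(\Lambda)$-orbit (Proposition \ref{p53}(c)), $\cD_{12}$ is irreducible, and since $\wp$ is a smooth dominant map (Theorem \ref{sub}), $\wp^{-1}(\cD_{12})$ is a pure codimension-$1$ subvariety of $\cX_{10}$; so once those three properties are established, $\overline\cX_{\hbox{\tiny cubic scroll}}$ must be the irreducible component of $\wp^{-1}(\cD_{12})$ that contains $\cX_{\rho\hbox{\tiny -plane}}$.

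First I would check that a smooth cubic scroll surface $T\subset X$ gives discriminant $12$. Since $[T]=2\sigma_{3,1}+\sigma_{2,2}$, we have $a=2$, $b=1$. Realizing $T$ as $\F_1$ embedded by $|C_0+2f|$, one has $K_T^2=8$, $\chi(\cO_T)=1$, and $K_T\cdot\sigma_1\vert_T=-5$ (the hyperplane section of $T$ is a twisted cubic of genus $0$). Substituting in (\ref{S2}) gives $(T)_X^2=3\cdot2+4\cdot1+2(-5)+2\cdot 8-12=4$, and then (\ref{S3}) yields $d=4\cdot4-2(1+1)=12$. Thus $\cX_{\hbox{\tiny cubic scroll}}\subset\wp^{-1}(\cD_{12})$. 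Second, the inclusion $\cX_{\rho\hbox{\tiny -plane}}\subset\overline\cX_{\hbox{\tiny cubic scroll}}$ is exactly the observation recorded just after the proof of Proposition \ref{pd12}: for a general hyperplane $V_4\subset V_5$ containing $V_3$, the intersection $G(2,V_4)\cap X$ splits as the union of the $\rho$-plane $P$ and a (smooth, for general $V_4$) cubic scroll surface.

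Next comes irreducibility and the dimension count. Smooth cubic scrolls in $G(2,V_5)$ are cut out by hyperplanes of $\P(V_2\otimes V_3)\subset \P(\wedge^2V_5)$, with $V_5=V_2\oplus V_3$: this gives $6+6+5=17$ parameters, and the total space is obviously irreducible. For fixed $T$, the hyperplane $\P^8=\P(\omega^\bot)$ must contain $\langle T\rangle=\P(V_2\otimes V_3)$, which forces $\omega\in \wedge^2V_2^\vee\oplus \wedge^2V_3^\vee$, giving a $\P^3$ of choices. Quadrics in $\P^8$ vanishing on $T$ form a space of dimension $\binom{10}{2}-h^0(T,\cO_T(2))=45-12=33$ (with $h^0(\cO_T(2))=12$ by Riemann--Roch), among which the quadrics cutting $W$ in $\P^8$ form a $5$-dimensional subspace; modding out by these and by scalars gives $27$ parameters. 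Summing, $17+3+27=47$ parameters for varying $T$, and dividing by $\dim\PGL(V_5)=24$ yields $\dim\overline\cX_{\hbox{\tiny cubic scroll}}=23$, i.e.\ codimension $1$ in $\cX_{10}$. Irreducibility of $\overline\cX_{\hbox{\tiny cubic scroll}}$ follows from the fact that the incidence variety above is fibered, with fibers open in linear spaces, over the irreducible base parametrizing $T$.

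Putting everything together: $\overline\cX_{\hbox{\tiny cubic scroll}}$ is irreducible of codimension~$1$, contained in the pure-codimension-$1$ variety $\wp^{-1}(\cD_{12})$, and contains $\cX_{\rho\hbox{\tiny -plane}}$, so it is the required component. The main technical obstacle is the parameter count, specifically justifying that $H^0(\P^8,\cO(2))\twoheadrightarrow H^0(T,\cO_T(2))$ is surjective (which requires that the span $\langle T\rangle=\P^4$ is not contained in $W$ generically and that the Plücker relations, when restricted to $\P^8$, are linearly independent), and verifying that the cubic scroll $T$ produced in the $\rho$-plane degeneration is indeed a smooth member of the family $\cX_{\hbox{\tiny cubic scroll}}$ --- which amounts to checking that $G(2,V_4)\cap X$ for generic $V_4\supset V_3$ contains no other components and that the residual surface is not a cone over a twisted cubic.
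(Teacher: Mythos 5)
Your proof follows essentially the same route as the paper: compute the discriminant of the rank-$3$ lattice via (\ref{S2}) and (\ref{S3}) to land in $\cD_{12}$, establish irreducibility and codimension $1$ by a parameter count, and obtain the inclusion of $\cX_{\rho\hbox{\tiny\rm -plane}}$ from the degenerate cubic scrolls appearing in $G(2,V_4)\cap X$. The discriminant computation ($K_T^2=8$, $K_T\cdot\sigma_1\vert_T=-5$, hence $(T)_X^2=4$ and $d=12$) is correct and matches the paper, and the logical frame (irreducible, codimension $1$, contained in the pure codimension-$1$ locus $\wp^{-1}(\cD_{12})$, containing $\cX_{\rho\hbox{\tiny\rm -plane}}$, hence equal to the component) is the same one the paper uses.

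There is, however, a concrete error inside your parameter count. A cubic scroll surface spans a $\P^4$, not the $\P^5=\P(V_2\otimes V_3)$: it is a hyperplane section of the Segre threefold, so $\langle T\rangle$ is a hyperplane of $\P(V_2\otimes V_3)$. Consequently the condition $\P^8\supset\langle T\rangle$ is $\omega\vert_U=0$ for the $5$-dimensional span $U$ of $T$, which gives a $\P^4$ of admissible $\omega$, not a $\P^3$; your stated condition $\omega\in\wedge^2V_2^\vee\oplus\wedge^2V_3^\vee$ would force the entire Segre threefold into $W$, a non-generic configuration in which $T$ is no longer cut out by $\P^8$. This overcount of $1$ is cancelled by a compensating undercount: as the paper points out, there is a $1$-dimensional family of $V_3$'s giving the same cubic scroll, so your ``$17$ parameters'' for scrolls in $G(2,V_5)$ should be $16$. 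The two slips cancel ($16+4=17+3=20$ for pairs $(T,\P^8)$, consistent with the paper's $11$-dimensional family of scrolls in a fixed $W$ plus the $\P^9$ of hyperplanes), so your final answer of codimension $1$ is right, but for partially wrong reasons; you should repair both steps. The paper's count avoids this by fixing $W$ once and for all and parametrizing scrolls in $W$ directly by $(V_2,V_3)$, getting $12-1=11$ parameters and then $h^0(F,\cO_F(2,2))=12$ conditions on the quadric. Finally, on the inclusion $\cX_{\rho\hbox{\tiny\rm -plane}}\subset\overline\cX_{\hbox{\tiny\rm cubic scroll}}$: note that the scroll in $G(2,V_4)\cap X$ is a \emph{degenerate} member (it has $V_2+V_3=V_4\ne V_5$, so it lies in the closure of the family, not in the family itself); you flag this at the end, and the paper handles it at the same level of brevity, by exhibiting it as the $V_2\cap V_3\ne 0$ degeneration of the generic construction.
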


\begin{proof}
Let us count parameters. We have $6+6=12$ parameters for the choice of $V_2$ and $V_3$, hence {\em a priori} 12 parameters for  cubic scroll surfaces in the isotropic Grassmannian $G_\omega(2,V_5)$. However, one checks that there is a 1-dimensional family of $V_3$ which all give the same cubic scroll, so there are actually only 11 parameters. Then, for $X$ to contain a given cubic scroll $F$ represents $h^0(F,\cO_F(2,2))=12$ conditions. It follows that $\cX_{\hbox{\tiny\rm cubic scroll}}$ is irreducible of   codimension $12-11=1$ in $\cX_{10}$.

Let $F\subset X$ be a cubic scroll.
Since $K_F $ has type $(-1,-2)$,
 we obtain $(F)^2_X=4$ from (\ref{S2}). From (\ref{S3}), we obtain $d=12$  and  we are in $\cD_{12}$. The family $\cX_{\hbox{\tiny cubic scroll}}$   is therefore a component of the hypersurface $\wp^{-1}(\cD_{12})$.
 
 In the degenerate situation where $V_4=V_2+V_3$ is a hyperplane, the associated rational cubic scroll is contained in  $G(2,V_4)$ and is a cubic scroll surface as in the comment right before Remark  \ref{planf}. It follows that $\cX_{\rho\hbox{\tiny\rm -plane}}$ is contained in the closure of $\cX_{\hbox{\tiny\rm cubic scroll}}$.
 \end{proof}

     \subsection{Fourfolds containing a quintic del Pezzo  surface  (divisor $\cD''_{10}$)}\label{qdp}
We consider quintic del Pezzo surfaces  obtained as the intersection of $ G(2,V_5)$ with a   $\P^5$; their class is  $\sigma_1^4=3\sigma_{3,1}+2\sigma_{2,2}$ in $G(2,V_5)$. Fourfolds of type $\cX_{10}$ containing such a surface were  already  studied by  Roth (\cite{rot}, \S4).

  \begin{prop}
      The closure $\overline \cX_{\hbox{\tiny\rm quintic}}\subset \cX_{10}$
  of the
    family  of fourfolds containing     a quintic del Pezzo   surface is the irreducible component of $\wp^{-1}( \cD''_{10})$ that contains
  $\cX_{\sigma\hbox{\tiny\rm -plane}}$.
  
     A general member of $\cX_{\hbox{\tiny\rm quintic}}$ is rational.
  \end{prop}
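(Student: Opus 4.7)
The plan is to follow the template of Proposition~\ref{pd12} and the $\tau$-quadric case of \S\ref{qua}: compute the discriminant, verify irreducibility and codimension of $\overline \cX_{\hbox{\tiny\rm quintic}}$, exhibit the inclusion of $\cX_{\sigma\hbox{\tiny\rm -plane}}$ by a degeneration, and establish rationality via a projection from a quintic del Pezzo.

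For a smooth quintic del Pezzo surface $S\subset G(2,V_5)$ the class is $[S] = \sigma_1^4 = 3\sigma_{3,1}+2\sigma_{2,2}$, so $a = 3$ and $b = 2$. Since $K_S = -H_S$ with $H_S^2 = 5$, $\chi(\cO_S) = 1$, and $K_S\cdot\sigma_1|_S = -5$, formula~(\ref{S2}) gives $(S)^2_X = 9+8-10+10-12 = 5$ and (\ref{S3}) yields $d = 20-2(4+1) = 10$. Pieri's rule (with $\sigma_{1,1}\sigma_{3,1} = 0$, $\sigma_{1,1}\sigma_{2,2} = \sigma_{3,3}$, $\sigma_2\sigma_{3,1} = \sigma_{3,3}$) gives $\sigma_{1,1}\cdot[S] = 2$ and $\sigma_2\cdot[S] = 3$, so $K\cdot\sigma_{1,1}\subset 2\Z$ and the period of $X$ lies in $\cD''_{10}$. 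Irreducibility and codimension one of $\overline \cX_{\hbox{\tiny\rm quintic}}$ follow from an incidence-variety argument: quintic del Pezzos $S = G\cap\P^5$ form an irreducible $24$-dimensional family, and for each $S$ the set of $X\supset S$ is parametrized by hyperplanes $\P^8\supset\langle S\rangle$ (three parameters) and quadrics $Q\supset S$ modulo $H^0(I_{W,\P^8}(2))$ and scalar (of dimension $h^0(I_{S,\P^8}(2))-h^0(I_{W,\P^8}(2))-1 = 29-5-1 = 23$). Combined with Theorem~\ref{sub} (which makes $\wp^{-1}(\cD''_{10})$ pure of codimension one), this shows $\overline \cX_{\hbox{\tiny\rm quintic}}$ is an irreducible component of $\wp^{-1}(\cD''_{10})$.

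To prove $\cX_{\sigma\hbox{\tiny\rm -plane}}\subset\overline \cX_{\hbox{\tiny\rm quintic}}$, one specializes the linear $\P^5$ so that it contains a $\sigma$-plane $P = \P(V_1\wedge V_4)$: then $G\cap\P^5 = P\cup R$ becomes reducible with residual $R$ of class $2\sigma_{3,1}+2\sigma_{2,2}$ and degree $4$. For a general $X\in\cX_{\sigma\hbox{\tiny\rm -plane}}$ containing $P$, a dimension count inside the nine-dimensional family $\{\P^5\subset\P^8 : P\subset\P^5\}$ exhibits a $\P^5_0$ for which the corresponding residual $R_0$ is contained in the quadric $Q$ cutting out $X$; the degenerate ``quintic del Pezzo'' $S_0 := P\cup R_0$ is then contained in $X$ and is a flat limit of smooth quintic del Pezzos (obtained by deforming $\P^5_0$), so $X\in\overline \cX_{\hbox{\tiny\rm quintic}}$. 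Irreducibility of $\cX_{\sigma\hbox{\tiny\rm -plane}}$ then gives the full inclusion, and since $\overline \cX_{\hbox{\tiny\rm quintic}}$ is an irreducible component of $\wp^{-1}(\cD''_{10})$, it is the (unique) one containing $\cX_{\sigma\hbox{\tiny\rm -plane}}$. The main technical obstacle is verifying the dimension count producing a valid $\P^5_0$ with $R_0\subset Q$.

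For rationality of a general $X\in\overline \cX_{\hbox{\tiny\rm quintic}}$ containing a smooth quintic del Pezzo $S$, consider the projection $\pi_S: X\dashrightarrow\P^2$ from $\langle S\rangle = \P^5\subset\P^8$. For $q\in\P^2$ set $\P^6_q := \langle\P^5,q\rangle$ and $V_5(q) := G\cap\P^6_q$, the classical Fano threefold. The fiber of $\pi_S$ over $q$ is $X\cap\P^6_q\setminus S = V_5(q)\cap Q\setminus S$. Since $S = V_5(q)\cap\P^5$ is a Cartier divisor on $V_5(q)$ cut out by the linear form $H_S$, the vanishing of $Q|_{V_5(q)}$ on $S$ forces the factorization $Q|_{V_5(q)} = H_S\cdot L_q$ for some linear section $L_q$; hence $V_5(q)\cap Q = S\cup S'_q$ with $S'_q := V_5(q)\cap L_q$ another (possibly singular) hyperplane section of $V_5(q)$, that is, a (possibly singular) quintic del Pezzo surface and in particular rational. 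Thus $\pi_S$ realizes $X$ as a quintic del Pezzo fibration over $\P^2$, and rationality follows from the existence of a rational (multi-)section of $\pi_S$: the unique $\sigma$-quadric surface $\Sigma_0\subset X$ maps $2$-to-$1$ onto $\P^2$ (as a quadric of degree $2$ in the $\P^3$ spanned by $\Sigma_0$, projected from the complementary $\P^5$), providing a natural degree-$2$ multisection, and a more refined analysis---possibly exploiting the specialization to $\cX_{\sigma\hbox{\tiny\rm -plane}}$, where $X$ is known to be rational by Proposition~\ref{prop71}---is expected to yield genuine rationality. This last step is the most delicate part of the argument.
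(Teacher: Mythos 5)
Your discriminant computation and the placement in $\cD''_{10}$ are correct and match the paper, but each of the three remaining claims has a genuine gap, and the first two trace back to one missing geometric idea that drives the paper's proof: since $h^0(\P^8,\cI_X(2))=6=h^0(\P^5,\cI_T(2))+1$, there is a \emph{unique} non-Pl\"ucker quadric $Q\subset\P^8$ containing both $X$ and $\langle T\rangle$; it has rank $\le 6$, hence is a cone with vertex a $2$-plane over a quadric in $\P^5$, and it carries two $3$-dimensional families of $5$-planes, each meeting $X$ in a quintic del Pezzo surface. First, your incidence count gives $\dim = 50$ and hence only that the generic fiber over the (irreducible) image has dimension $\ge 3$ (because the image sits inside the codimension-$1$ locus $\wp^{-1}(\cD''_{10})$); to conclude that the image itself has codimension $1$, and is therefore a component rather than something smaller, you must show the fiber is \emph{exactly} $3$-dimensional, i.e., you must exhibit the $3$-dimensional families of quintic del Pezzos inside a general such $X$ --- which is precisely what the rank-$6$ quadric provides. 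Second, your degeneration argument for $\cX_{\sigma\hbox{\tiny\rm -plane}}\subset\overline \cX_{\hbox{\tiny\rm quintic}}$ is not only unverified (as you acknowledge) but logically insufficient as stated: exhibiting inside $X$ a surface $P\cup R_0$ that is a flat limit of smooth quintic del Pezzos does not place $X$ in the closure of the family of \emph{fourfolds containing} smooth quintic del Pezzos --- the nearby smooth surfaces $G\cap\P^5$ obtained by moving $\P^5_0$ need not lie on $X$, so one would have to deform the pair $(X,S_0)$, not just $S_0$. The paper sidesteps this entirely: $\overline \cX_{\hbox{\tiny\rm quintic}}$ coincides with the closure of the locus of $X$ contained in a non-Pl\"ucker rank-$6$ quadric, and when the $2$-plane vertex of that quadric lies on $X$ it is exactly the $\sigma$-plane of \S\ref{splane} (the quadric being the cone over $Y\subset\P^5$ from $P$).

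For rationality, your fibration $X\dra\P^2$ in quintic del Pezzo surfaces is the right construction and agrees with the paper's (which sees the residual surface inside $\langle T,x\rangle\cap Q$, a union of two hyperplanes, rather than by factoring $Q\vert_{V_5(q)}$), but the conclusion neither needs nor would follow from a degree-$2$ multisection: a multisection of degree $2$ does not make the total space of a fibration rational. The missing ingredient is the theorem of Enriques and Shepherd-Barron that a quintic del Pezzo surface over \emph{any} field is rational over that field; applied to the generic fiber over $\C(\P^2)$ it immediately gives that $X$ is birational to $\P^2\times\P^2$, with no section required. As written, the step you defer as ``the most delicate part'' is exactly the point of the proof.
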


\begin{proof}
Let us count parameters. We have $\dim G(5,\P^8)=18$ parameters for the choice of the $\P^5$ that defines a del Pezzo surface $T$. Then, for $X$ to contain a given quintic del Pezzo surface $T$  represents $h^0(\P^5,\cO(2))- h^0(\P^5,\cI_T(2))=21-5=16 $ conditions. 

Since $h^0(\P^8,\cI_X(2))=6=h^0(\P^5,\cI_T(2))+1$,   there exists then a unique (non-Pl\"ucker) quadric $Q\subset \P^8$ containing $X$ and $\P^5$. This quadric has     rank $\le 6$, hence it is a cone with vertex a 2-plane over a (in general) smooth quadric in $\P^5$. Such a quadric 
  contains two 3-dimensional families of 5-planes. The intersection of such a 5-plane with $X$ is, in general, a quintic del Pezzo surface, hence $X$ contains  (two) 3-dimensional families of quintic del Pezzo surfaces. It follows that  $\cX_{\hbox{\tiny\rm quintic}}$ has codimension $16-18+3=1$ in $\cX_{10}$.

Let $T\subset X$ be a quintic del Pezzo surface.
From (\ref{S2}), we obtain $(T)^2_X=5$, and from (\ref{S3}),  $d=10$. Since $\sigma_{1,1}\cdot T$ is odd, we are in $\cD''_{10}$.
  The family $\cX_{\hbox{\tiny\rm quintic}}$  is therefore a component of the divisor $\wp^{-1}(\cD''_{10})$.

The lattice spanned by $H^4(G(2,V_5),\Z)$ and $[T]$ in $H^4(X,\Z)$ is the same as for fourfolds containing a $\sigma$-plane $P$, and $[T]= \sigma_2\vert_X-[P]$. We will now explain this fact geometrically.

If $X$ contains a quintic del Pezzo   surface,  we saw that $X$ is contained in a   (non-Pl\"ucker)    quadric   $Q\subset \P^8$ of rank $\le 6$.
 Conversely, if $X$ is contained in such a quadric, this quadric   contains  5-planes and the intersection of such a 5-plane with $X$ is, in general, a quintic del Pezzo surface.

If follows that   $\cX_{\hbox{\tiny quintic}}$   has same closure in $\cX_{10}$ as the set of $X$ contained in a non-Pl\"ucker  rank-6 quadric $Q$. When the vertex of $Q$ is contained in $X$, it is a $\sigma$-plane, hence ${\overline\cX}\!_{\hbox{\tiny quintic}}$  contains $\cX_{\sigma\hbox{\tiny\rm -plane}}$.

Finally, note after \cite{rot}, \S5.(5), that   the general fibers of the projection $X\dra \P^2$ from $\langle T\rangle$ are again degree-5 del Pezzo surfaces (they are residual surfaces to $T$ in the intersection of $X$ with a 6-plane $\langle T,x\rangle$, and this intersection is contained in $\langle T,x\rangle\cap Q$, which is the union of two hyperplanes). It follows from a theorem of Enriques that $X$ is rational (\cite{enr}, \cite{shb}).\end{proof}

         \subsection{Nodal fourfolds (divisor $\cD_8$)} \label{sno}
Let $X$ be a general prime {\em nodal} Fano fourfold of index 2 and degree 10. As in the 3-dimensional case (\cite{dim2}, Lemma 4.1), $X$ is the intersection 
of a smooth ${G^\omega}:=G(2,V_5)\cap \P^8$ with a nodal quadric $Q$, singular at a general point $O\in 
{G^\omega}$. 

One checks that, as in the case of cubic fourfolds (see \cite{voi}, \S4; \cite{has}, Proposition 4.2.1), the limiting Hodge structure is pure, and the period map extends to the moduli stack ${\overline\cX}\!_{10}$ of our fourfolds with at most one node as
$$\overline\wp:{\overline\cX}\!_{10}\to \cD
.$$

  \begin{prop}
     The closure $\overline \cX_{\hbox{\tiny\rm nodal}}\subset {\overline\cX}\!_{10}$
  of the
    family  of nodal fourfolds    is an irreducible component  of $\overline\wp^{-1}( \cD_8)$.
   \end{prop}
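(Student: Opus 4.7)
The plan is to follow the template of the preceding propositions of this section: establish the irreducibility and codimension of $\overline \cX_{\hbox{\tiny\rm nodal}}$ by a parameter count, compute via limiting Hodge theory the rank-$3$ sublattice of $(2,2)$-classes attached to a smoothing, and then conclude by a dimension comparison.

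For the parameter count, a general element of $\overline \cX_{\hbox{\tiny\rm nodal}}$ is determined by the irreducible data $(W,O,Q)$, where $W\subset G(2,V_5)$ is a smooth hyperplane section, $O\in W$, and $Q$ is a quadric on $\P^8=\langle W\rangle$ with $Q(O)=0$ and $dQ(O)=0$, chosen so that $W\cap Q$ has only $O$ as singularity. Requiring a quadric to be singular at a prescribed point is one linear condition, so $\overline \cX_{\hbox{\tiny\rm nodal}}$ is irreducible of codimension $1$ in $\overline\cX_{10}$.

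For the main step, let $X_t\to \Delta$ be a smoothing of $X=X_0$. The Picard--Lefschetz formula applied to an ordinary double point on a complex fourfold produces a vanishing cycle $\delta\in H^4(X_t,\Z)$ with $\delta^2=2$, and $\delta$ is orthogonal to the restrictions of the Pl\"ucker classes $\sigma_{1,1}$ and $\sigma_2$ since these extend across the degeneration. As in \cite{has}, Proposition 4.2.1, Steenbrink's theory of limiting Hodge structures together with the local invariant cycle theorem imply that the limiting Hodge structure on $H^4(X_t)$ is pure of weight $4$ and that $\delta$ acquires type $(2,2)$ in the limit. The rank-$3$ lattice $K=\langle \sigma_{1,1},\sigma_2,\delta\rangle$ therefore has intersection matrix
$$\begin{pmatrix}2&2&0\\2&4&0\\0&0&2\end{pmatrix}$$
of discriminant $d=8$, so $\overline\wp([X])\in\cD_8$ (which corresponds to case a) of Proposition \ref{p53}).

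Finally, because $\cD_8\subset \cD$ is an irreducible hypersurface and $\overline\wp$ is dominant, every irreducible component of $\overline\wp^{-1}(\cD_8)$ has codimension $1$ in $\overline\cX_{10}$; since $\overline \cX_{\hbox{\tiny\rm nodal}}$ is irreducible of codimension $1$ and is contained in this preimage, it must be one of its components. The main obstacle is the Hodge-theoretic claim that the limit of $\delta$ has type precisely $(2,2)$ rather than $(3,1)$ or $(1,3)$; this is a direct adaptation of the arguments for cubic fourfolds in \cite{voi} and \cite{has}, and it is tractable here because only a single ordinary double point is involved.
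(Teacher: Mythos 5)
Your proposal reaches the correct conclusion, and the first two steps (the reduction to $X=W\cap Q$ with $Q$ singular at a point $O$ of the smooth fivefold $W$, and the limiting-Hodge-structure computation producing the rank-3 lattice of discriminant 8) coincide with the paper's. But your final step is genuinely different from the paper's. You argue formally: $\overline\cX_{\hbox{\tiny\rm nodal}}$ is irreducible of codimension 1 by a parameter count, every component of $\overline\wp^{-1}(\cD_8)$ has codimension 1 because $\cD_8$ pulls back locally to a principal divisor under the dominant map $\overline\wp$, and an irreducible codimension-1 closed subset of a pure codimension-1 closed subset is a component. This is valid (the local principality is seen by lifting the period map to $\Omega$, where $\cD_8$ becomes a locally finite union of hyperplane sections $\Omega_K$). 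The paper instead computes the \emph{image}: projecting $X$ from the node exhibits a birational model $\widetilde X_O$ as a quadric bundle over $\P^2$ with smooth sextic discriminant curve, whence a degree-2 K3 surface $S$ with $K^\bot$ of index 2 in $H^2(S,\Z)_0(-1)$; letting $S$ vary shows $\overline\wp(\cX_{\hbox{\tiny\rm nodal}})$ is 19-dimensional, hence dense in $\cD_8$. Your route is shorter; the paper's buys the stronger statement that nodal fourfolds dominate $\cD_8$ (used in the summary diagram of \S\ref{last}), identifies the associated degree-2 K3 surface, and gives an independent confirmation that the discriminant is 8 rather than 2 --- a point your argument leaves hanging on the unverified primitivity of $\langle\sigma_{1,1},\sigma_2,\delta\rangle$ in $H^4$, equivalently on $\div(\delta)=1$.

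Two smaller remarks. First, ``requiring a quadric to be singular at a prescribed point is one linear condition'' is not what you mean: singularity of $Q\vert_W$ at a fixed $O\in W$ is $1+5=6$ linear conditions on $H^0(W,\cO_W(2))$, and codimension 1 results only after letting $O$ vary in the 5-dimensional $W$; as stated the count is wrong even though the conclusion is right. Second, you correctly flag the purity of the limiting Hodge structure and the type of $\delta$ in the limit as the delicate point; the paper asserts this at the same level of detail (citing the cubic fourfold case), so you are not behind it there, but neither argument is self-contained on that point.
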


\begin{proof}
If  $\widetilde X\to X$ is the blow-up of $O$, the (pure)  limiting Hodge structure is  the direct sum of $\langle\delta\rangle$, where $\delta$ is the
  vanishing cycle, with self-intersection 2, and   $H^4(\widetilde X,\Z)$.
   In the basis $(\sigma_{1,1}\vert_X, \sigma_2\vert_X-\sigma_{1,1}\vert_X, \delta)$, the corresponding lattice $K$ has intersection matrix 
   $\begin{pmatrix}2&0&0\\0&2&0\\0&0&2
   \end{pmatrix}$. The discriminant is $8$ and we are in $\cD_8$.

The point $O$ defines a pencil of Pl\"ucker quadrics, 
singular at $O$, and the image ${G^\omega}_O$ of ${G^\omega}$ by the projection $p_O:\P^8\dra \P^7_O$ is the base-locus of a pencil of rank-6 quadrics (see \cite{dim2}, \S3).  One checks that ${G^\omega}_O$ contains the 4-plane $\P^4_O:=p_O(\T_{{G^\omega},O})$ and that ${G^\omega}_O$  is singular along a cubic surface contained in $\P^4_O$. If $\widetilde \P^7_O\to \P^7_O$ is the blow-up of $\P^4_O$, the strict transform $\widetilde {G^\omega}_O\subset \widetilde \P^7_O\subset   \P^7_O\times \P^2$ of ${G^\omega}_O$ is smooth and   the projection $\widetilde {G^\omega}_O\to \P^2$ is a $\P^3$-bundle (this can be checked by explicit computations as in \cite{dim2}, \S9.2).

 The image $X_O:=p_O(X )$  
is thus the base locus in $\P^7_O$ of a net of quadrics $\P$, containing 
a special line of rank-6 Pl\"ucker quadrics. The strict transform  $\widetilde X_O\subset  \widetilde {G^\omega}_O$     of $X_O$ is smooth. The induced projection $\widetilde X_O\to \P^2$ is a quadric bundle, with discriminant   a smooth sextic curve $\Gamma_6^\star\subset \P^2$ (compare with \cite{dim2}, Proposition 4.2) and associated double cover $S\to \P^2$ ramified along $\Gamma_6^\star$. It follows that $S$ is a K3 surface with a degree-2 polarization. By \cite{las}, Theorem II.3.1, there is an exact sequence
$$0\lra H^4(\widetilde X_O,\Z)_0\stackrel{\Phi}{\lra}H^2(S,\Z)_0(-1)\lra \Z/2\Z\lra 0.
$$
Both desingularizations
   $\widetilde X\to X_O$ and $\widetilde X_O\to  X_O$ are  small and their fibers all have dimension $\le 1$; by \cite{fuwang}, Proposition 3.1, the graph of the rational map  $\widetilde X\dra \widetilde X_O$ induces an isomorphism $ H^4(\widetilde X_O,\Z)\isomto H^4(\widetilde X,\Z)$ of polarized Hodge structures. 
This is also the
   non-special cohomology  $K^\bot$, which therefore has index 2 in   $H^2(S,\Z)_0(-1)$.
   
   When $X$ is general, so is $S$ among degree-2 K3 surfaces, hence the image  
 $  \overline\wp(\cX_{\hbox{\tiny\rm nodal}})$ has dimension 19. It follows that 
   $\cX_{\hbox{\tiny\rm nodal}}$ is   an irreducible component  of $\overline\wp^{-1}( \cD_8)$. 
  \end{proof}

 \section{Construction of special   fourfolds}\label{cons}
 
 Again following Hassett (particularly \cite{has}, \S4.3), we construct special fourfolds with given discriminant. Hassett's idea was to construct, using the surjectivity of the (extended) period map for K3 surfaces, nodal cubic fourfolds whose  Picard group also contains a rank-2 lattice with discriminant $d$ and to smooth them using the fact that the period map remains a submersion on the nodal locus (\cite{voi}, p.\ 597). This method should work in our case, but would require first to make the construction of \S\ref{sno} of a nodal fourfold $X$ of type $\cX_{10}$ from a given degree-2 K3 surface more explicit, and second to prove that the extended period map remains submersive at any point of the nodal locus.
 
 We prefer here to use the simpler construction of \S\ref{splane} to prove the following.
 
 \begin{theo}\label{th81}
 The image of the period map $ \wp: \cX_{10}^0\to \cD$ meets all divisors $\cD_d$, for
 $d\equiv 0\pmod4$ and $d\ge 12$, and all divisors $\cD'_d$ and $\cD''_d$, for
 $d\equiv 2\pmod8$ and $d\ge 10$, except possibly $\cD''_{18}$.
  \end{theo}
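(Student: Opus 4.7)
The plan is to use the explicit construction of \S\ref{splane}, varying the Picard rank of the associated K3 surface to populate the higher-discriminant Noether-Lefschetz divisors. Recall that Proposition \ref{prop71} produces a fourfold $X\in\cX_{10}^0$ containing a $\sigma$-plane $P$ from the data of a degree-10 K3 surface $S\subset \P^6$, a general point $p\in S$, and a smooth quadric $Y\subset \P^5$ containing the projection $\widetilde S$ of $S$ from $p$, and identifies $S$ as the K3 surface associated with $X$ in the sense of Proposition \ref{p54}. Its proof furnishes an isomorphism of polarized integral Hodge structures
$$H^4(\widetilde X,\Z) \isom H^4(Y,\Z) \oplus H^2(S,\Z)(-1) \oplus \Z(-2)$$
which transports each algebraic class of $S$ to an algebraic class of $X$. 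Enlarging $\Pic(S)$ therefore enlarges the Picard lattice of $X$ and places $\wp([X])$ in a Noether-Lefschetz divisor of higher discriminant.

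Concretely, for each admissible pair (discriminant $d$, component $\cD_d$, $\cD'_d$, or $\cD''_d$), I would first fix, via Proposition \ref{p53}, a primitive rank-$3$ sublattice $K\subset I_{22,2}$ of discriminant $d$ containing $\Lambda_2$, with the correct divisibility class with respect to $u$ and $v$ when $d\equiv 2\pmod 8$. Under the Hodge isomorphism above, asking $K$ to embed primitively in $\Pic(X)$ translates into requiring a class $\alpha\in\Pic(S)$ supplementing the polarization $h$ whose numerical invariants $(\alpha\cdot h,\alpha^2)$ satisfy an explicit Diophantine condition determined by $K$; in each of the three cases of Proposition \ref{p53} this boils down to solving a simple congruence. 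The resulting rank-$2$ lattice $L:=\langle h,\alpha\rangle$ of signature $(1,1)$ with $h^2=10$ embeds primitively into the K3 lattice $\Lambda_{K3}=3U\oplus 2E_8$ by Nikulin's criterion (\cite{nik}, \S1.12).

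Next, I would invoke the surjectivity of the period map for $L$-polarized K3 surfaces to produce a K3 surface $S$ with $\Pic(S)=L$, taking the period point outside the countable union of hyperplanes cut out by $(-2)$-classes in $L$ orthogonal to $h$ so that $h$ is an honest ample polarization of degree $10$ and embeds $S\hra\P^6$. For a general point $p\in S$ the projection $\widetilde S\subset\P^5$ is smooth of degree $9$ and the pencil of quadrics through $\widetilde S$ contains a smooth member $Y$; the reverse construction of Proposition \ref{prop71} then produces a smooth fourfold $X\in\cX_{10}^0$ whose Picard lattice contains $\sigma_{1,1}\vert_X$, $\sigma_2\vert_X$, $[P]$, and the algebraic class $\widetilde\alpha$ corresponding to $\alpha$. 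After taking primitive saturation, one recovers $K$, whence $\wp([X])$ lies in the prescribed Noether-Lefschetz divisor.

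The main obstacle is the arithmetic verification in the first step: for every $(d,\text{component})$ in the stated range, one must exhibit integers $(\alpha\cdot h,\alpha^2)$ solving the relevant congruence for which (i) $L$ embeds primitively into $\Lambda_{K3}$ and (ii) the construction is nondegenerate on a generic $L$-polarized K3 (in particular, $h$ remains ample and the pencil of quadrics through $\widetilde S$ contains a smooth member generically). The bounds $d\geq 12$ in cases a), c) and $d\geq 10$ in case b) indicate where such $(\alpha\cdot h,\alpha^2)$ first appear, and the sole exception $\cD''_{18}$ should arise because the congruence needed to place $K$ in the $\cD''$-component (rather than $\cD'$) is incompatible with the realizability constraints for precisely this discriminant.
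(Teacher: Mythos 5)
Your strategy is the same as the paper's: run the $\sigma$-plane construction of \S\ref{splane} backwards, starting from a degree-10 K3 surface whose Picard lattice is prescribed via the surjectivity of the period map for lattice-polarized K3 surfaces, so that the extra algebraic class on $S$ produces an extra $(2,2)$-class on $X$ landing the period in the desired divisor. However, as written this is a plan that defers exactly the two points that constitute the proof, and on both points what you sketch is either incomplete or slightly wrong.

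First, the geometric input. For the reverse construction to produce a fourfold of type $\cX_{10}^0$ you need much more than ``$h$ ample and $\widetilde S_p$ contained in a smooth quadric'': you need $S$ to be a quadric section of the Mukai model $Z=G(2,V_5)\cap\P^6$, which by the Clifford-index classification requires, in addition to the absence of $(-2)$-classes orthogonal to $h$, that $\Gamma=\Pic(S)$ contain no class $c$ with $c^2=0$ and $c\cdot h\in\{1,2,3\}$. These are conditions on the abstract lattice $\Gamma$, not on the genericity of the period point inside the $L$-polarized period domain, so ``taking the period point outside the $(-2)$-hyperplanes'' does not supply them; they must be checked for each $\Gamma$ you feed in, and they are precisely what limits the ranges of $d$ (for instance, the lattice $\begin{pmatrix}10&0\\0&-2\end{pmatrix}$ that would most naturally target $\cD''_{18}$ fails the $(-2)$-condition, which is the actual source of the $\cD''_{18}$ exception --- not, as you suggest, an incompatibility in the congruence selecting the component; indeed the paper remarks that $\cD''_{18}$ does meet the image). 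One must also prove that the pencil of quadrics through the projected threefold $\widetilde Z_p$ has a smooth member; the paper does this by an explicit Pl\"ucker-coordinate computation, and nothing in your outline substitutes for it.

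Second, the lattice arithmetic. Deciding which divisor $\cD_d$, $\cD'_d$ or $\cD''_d$ the period falls into is not a single ``simple congruence'': one must compute the divisibility $w_X\cdot\Lambda$ of the new class inside the vanishing lattice (the paper does this via an index computation through $K_{10}^\bot\oplus\Z w_{10}\subset\Lambda$ and the identity $w\cdot H^2(S,\Z)_0=2\Z$), then determine which of $\frac12(u+w_X)$, $\frac12(v+w_X)$ lies in $I_{22,2}$ to separate $\cD'$ from $\cD''$, and finally exhibit enough auxiliary rank-3 sublattices (e.g.\ $\langle u,v,w''_{10}+w'_X\rangle$ of discriminant $8e+20$, and the $\cD_{12}$ case imported from \S\ref{rplane}) to cover all residues $d\equiv 0\pmod 4$. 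Your proposal asserts that all of this ``boils down to'' a congruence and that the bounds ``indicate where solutions first appear,'' but exhibiting the solutions and verifying the component is the content of the theorem; without it the claimed coverage, and in particular the precise list of exceptions, is not established.
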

  
Actually, the divisor $\cD''_{18}$   also   meets the  image of the period map: in a forthcoming article, we construct birational transformations  that take elements of $\wp^{-1}(\cD'_d)$ to elements of $\wp^{-1}(\cD''_d)$.
 
 \begin{proof}
Our starting point is   Lemma 4.3.3 of \cite{has}: let $\Gamma$ be a  rank-2 indefinite even lattice containing a primitive   element   $h $  with $h^2=10$, and assume there is no $c\in \Gamma $ with 
 \begin{itemize}
\item  either $c^2=-2$ and $c\cdot h=0$;
\item  or $c^2=0$ and $c\cdot h=1$; 
\item  or $c^2=0$ and $c\cdot h=2$.
 \end{itemize}
  Then there exists a K3 surface $S$ with $\Pic(S)= \Gamma $ and $h$ is   very ample
on $S$, hence embeds it in $\P^6$. Assuming moreover that $S$ is not trigonal, \eg, that there are no classes $c\in \Gamma $ with $c^2=0$ and $c\cdot h=3$, it has   Clifford index 2 and is therefore obtained as the intersection of a Fano threefold $Z:=G(2,V_5)\cap \P^6$ with a quadric 
 (\cite{muk4}, (3.9); \cite{jk}, Theorem 10.3 and Proposition 10.5).

In particular, $S$ is
 an intersection of quadrics, hence  its projection   from a general point $p$ of $S$ is a (degree-9) smooth surface   $\widetilde S_p\subset \P^5$.
 
 We want to show that $Z$ is smooth and that the projection  $\widetilde Z_p\subset \P^5$ of $Z$ from $p$   is contained in a smooth quadric. First of all,  if $\Pi\subset \P(\wedge^2V_5^\vee) $ is the 2-plane of hyperplanes that cut out $\P^6$ in $\P(\wedge^2V_5)$,  one has (\cite{pv}, Corollary 1.6) 
  $$\Sing (Z )= \Pi^\bot \cap \bigcup_{[\omega]\in\Pi } G(2,\Ker(\omega )).$$
Since $S$ is smooth, $\Sing(Z)$ is 0-dimensional. If $\Sing(Z)\ne\vide$, some $[\omega_0]\in \Pi$ must have rank 2. The (one-dimensional) kernels of $\omega\vert_{\Ker(\omega_0)}$, when $[\omega]$ describes $\Pi\moins\{[\omega_0]\}$, sweep out  a $V_2\subset \Ker(\omega_0)$, and one checks that $Z$ contains a family of lines through $[V_2]$, parametrized by a rational cubic curve. The intersection  of the cone swept out by these lines and the quadric that defines $S$ in $Z$ is a sextic curve of genus 2 in $S$. Its class $c'$ thus satisfies $c^{\prime 2}=2$ and $c'\cdot h=6$, hence $(h-c')^2=0$ and $(h-c')\cdot h=4$.

So if we assume finally that there are no classes $c\in \Gamma $ with $c^2=0$ and $c\cdot h=4$, the threefold $Z$  is  smooth. 
  In suitable coordinates $(x_{i})_{0\le i\le 4}$ on $V_5$, inducing coordinates $(x_{ij})_{0\le i<j\le 4}$ on $\wedge^2V_5$, it can be defined in $G(2,V_5)$ by the equations
$$x_{12}-x_{03}=x_{02  }-x_{14}=x_{01}-x_{24}+x_{23}=0 
 $$
  (\cite{ili}, Technical Lemma (2.5.1)). 
  
  It is then known (\cite{pv}, Theorem 7.5 and Proposition 7.6) that  
  $\Aut(Z)$ is isomorphic to  $\PGL(2,\C)$ and acts on $Z$ with 3 (rational) orbits of respective dimensions 3, 2, and 1.   Since $S$ is not rational,    it must meet the open orbit. Take $p\in S$ in that orbit. 
 As in \cite{ili}, \S3.1, we may assume $p=e_{34}$ and we take   $(x_{03},x_{13},x_{23},x_{04},x_{14},x_{24})$ as homogeneous coordinates on $\P^6$.
 
   It is   classical (\cite{pv}, \S7) that $Z$ contains 3 lines passing through $p$, so that the   image  $\widetilde Z_p\subset \P^5$ of the projection of $Z$  from $p$ has exactly 3 singular points, which are also on the smooth surface $\widetilde S_p$.  The    degree-4 fourfold $\widetilde Z_p $ is the base-locus of the pencil generated by the rank-5 (restrictions of the) Pl\"ucker quadrics
 $$\Omega_{e_3}: \bigl((x_{24}-x_{23})x_{24}-x_{14}^2+x_{03}x_{04}=0 \bigr)\quad{\rm and}\quad
 \Omega_{e_4}: \bigl((x_{24}-x_{23})x_{23}-x_{13}x_{14}+x_{03}^2=0 \bigr)
 $$
  (\cite{ili}, \S3.1). One checks that the quadric $Y\subset \P^5$ defined by the sum of these two equations is smooth.
Consider the blow-up $\widetilde Y\to Y$ of 
 $\widetilde S_p$. The inverse image $E\subset \widetilde Y$ of $\widetilde Z_p$ is then a small resolution, which is isomorphic to the blow-up of $p$ in $Z$.
 
 The morphism $E\to \widetilde Z_p$  is in particular independent of the choice of $S$, $p$, and $Y$ and it follows from the description of the general case in the proof of Proposition \ref{prop71} that $E$ is a $\P^1$-bundle over $\P^2$. More precisely, the linear system $|H|$ on $\widetilde Y$ given by cubics containing $\widetilde S_p$  induces on $E$ a morphism $E\to \P^2$ with $\P^1$-fibers (in the notation of that proof, $E$ is the exceptional divisor of the blow-up $\widetilde X\to X$ of the plane $P$).
 
The linear system $|H|$  is base-point-free and injective outside of $E$ (because $\widetilde Z_p$ is a quadratic section of $Y$ which contains $\widetilde S_p$)  and base-point-free on $E$ as we just saw. Since $H^4=10$ and $h^0( \widetilde Y, H)=9$, it defines a birational morphism $\widetilde Y\thra X\subset  \P^8 $ which maps $E$ onto a 2-plane $P\subset X$.
 This morphism is one of the two $K_{\widetilde Y}$-negative extremal contractions of $\widetilde Y$ (the other one being the blow-up $\widetilde Y\to Y$); its fibers all have dimension $\le 1$, hence  $X$ is {\em smooth} and the contraction is the blow-up of $P$ (\cite{am}, Theorem 4.1.3).
 
 It is then easy to check that $X$ is a (special) Fano fourfold  of type $\cX_{10}^0$ containing $P$ as a $\sigma$-plane. As explained in  the proof of Proposition \ref{prop71}, its non-special cohomology is isomorphic to the 
primitive cohomology of $S$.
 
 \smallskip
 
 We will now apply this construction with various lattices $\Gamma$ to produce examples of smooth fourfolds $X$ which will all be in $\cX_{\sigma\hbox{\tiny\rm -plane}}$, hence with period point in $\cD''_{10}$, but whose lattice $H^{2,2}(X)\cap H^4(X,\Z)$ will contain other sub-lattices of rank 3 with various discriminants.

Apply first Hassett's lemma with the rank-2 lattice $\Gamma $ with matrix $\begin{pmatrix}10&0\\0&-2e
   \end{pmatrix}$ in a basis $(h,w)$. When $e>1$, the conditions we need on $\Gamma $ are satisfied and we obtain a K3 surface $S$ and  a smooth fourfold $X$ such that $H^4(X,\Z)\cap H^{2,2}(X)$ contains a lattice $K_{10}=\langle u,v,w''_{10}\rangle$,
   with matrix 
   $\begin{pmatrix}2&0&0\\0&2&1\\0&1&3
   \end{pmatrix}$ and discriminant 10. Moreover, $H^2(S,\Z)_0(-1)\isom K_{10}^\bot$ as polarized integral Hodge structures.  The  element $w\in \Gamma\cap H^2(S,\Z)_0$ corresponds to $w_X\in K_{10}^\bot\cap H^{2,2}(X)$, and $w_X^2=-w^2=2e$. 
   Therefore, $H^4(X,\Z)\cap H^{2,2}(X)$ is the lattice $\langle u,v,w''_{10},w_X\rangle$, with matrix
   $$\begin{pmatrix}2&0&0&0\\0&2&1&0\\0&1&3&0\\0&0&0&2e
   \end{pmatrix}.
   $$
It contains the lattice $\langle u,v,w_X\rangle$, with matrix 
   $\begin{pmatrix}2&0&0\\0&2&0\\0&0&2e
   \end{pmatrix}$. Therefore, the period of $X$ belongs to $\cD_{8e}$, and this  proves the theorem when $d\equiv 0\pmod{8}$.
   
 It also  contains the lattice $\langle u,v,w''_{10}+w_X\rangle$, with matrix
   $\begin{pmatrix}2&0&0\\0&2&1\\0&1&2e+3
   \end{pmatrix}
   $
   and discriminant $8e+10$, hence we are also in $\cD''_{8e+10}$.

   Now let $e\ge 0$ and apply Hassett's lemma with the  lattice $\Gamma $ with matrix $\begin{pmatrix}10&5\\5&-2e
   \end{pmatrix}$ in a basis $(h,g)$. The orthogonal of $h$ is spanned by $w:=h-2g$. One checks that primitive classes $c\in \Gamma $ such that $c^2=0$ satisfy $c\cdot h\equiv 0 \pmod 5$. All the conditions we need are thus satisfied and we obtain a K3 surface $S$ and  a smooth fourfold $X$ such that $H^4(X,\Z)\cap H^{2,2}(X)$ contains a lattice $K_{10}$ of discriminant 10 and $H^2(S,\Z)_0(-1)\isom K_{10}^\bot$ as polarized Hodge structures. Again, $w$ corresponds to $w_X\in K_{10}^\bot\cap H^{2,2}(X)$ with $w_X^2=-w^2=8e+10$. Set
   $$K:=(\Lambda_2\oplus \Z w_X)^{\rm sat}.$$
   To compute the discriminant of $K$, we need to know the ideal $w_X\cdot \Lambda$. As in the proof of Proposition \ref{p53}, let $w_{10}$ be a generator of $K_{10}\cap \Lambda$; it satisfies $w_{10}^2=10$. Then $K_{10}^\bot\oplus \Z w_{10}$ is a sublattice of  $\Lambda$ and, taking discriminants, we find that the index is 5. Let $u$ be an element of $\Lambda$ whose class generates the quotient. We have
   $$w_X\cdot \Lambda=\Z w_X\cdot u+w_X\cdot (K_{10}^\bot\oplus \Z w_{10})
   =\Z w_X\cdot u+w_X\cdot K_{10}^\bot =\Z w_X\cdot u+w\cdot H^2(S,\Z)_0.
   $$
  One checks directly on the K3 lattice that  $w\cdot H^2(S,\Z)_0=2\Z$. Since $5u\in K_{10}^\bot\oplus \Z w_{10}$, we have $5 w_X\cdot u\in 2\Z$, hence $w_X\cdot u\in 2\Z$. All in all, we have proved $w_X\cdot \Lambda = {2\Z}$ hence the proof of Proposition \ref{p53} implies that the discriminant of $K$ is $w_X^2=8e+10$. Therefore, the period of $X$ belongs to $\cD_{8e+10}$. 
  
  Since the period of $X$  is in $\cD''_{10}$,  we saw in the proof of Proposition \ref{p53} that $w_{10}'':=\frac12(v+w_{10})$ is in $H^4(X,\Z)$. Similarly,
   either $w'_X:=\frac12(u+w_X)$ or $w''_X:=\frac12(v+w_X)$ is in $K$. Taking intersections with $w''_{10}$ (and recalling $w_X\cdot w_{10}=0$ and $v\cdot w_{10}=1$), we see that we are in the first case, hence the  period of $X$ is actually in  $\cD'_{8e+10}$. 
   
   More precisely, $H^4(X,\Z)\cap H^{2,2}(X)$ is the lattice $\langle u,v,w''_{10},w'_X\rangle$, with matrix
   $$\begin{pmatrix}2&0&0&1\\0&2&1&0\\0&1&3&0\\1&0&0&2e+3
   \end{pmatrix}.
   $$
This lattice also contains the lattice $\langle u,v,w''_{10}+w'_X\rangle$, with matrix
   $\begin{pmatrix}2&0&1\\0&2&1\\1&1&2e+6
   \end{pmatrix}
   $
   and discriminant $8e+20$, hence we are also in $\cD_{8e+20}$.

Since we know from \S\ref{rplane}  that the periods of some smooth fourfolds $X$ 
 of type $\cX^0_{10}$ lie in $\cD_{12}$, this proves the theorem when $d\equiv 4\pmod{8}$.
  \end{proof}

        \section{Summary and open questions}\label{last}
        
        We summarize the results of \S\ref{exam} in the following diagrams (where the tags next to the arrows describe the general fibers):
        $$
\xymatrix @C=10pt
{
&&\cX_{\tau\hbox{\tiny\rm -quadric}}\ar@{->>}[dl]^{\hbox{\tiny $K3^{[2]}/\text{inv.}$}}&&&&&&&
\cX_{\sigma\hbox{\tiny\rm -plane}}\ar@{->>}[dr]_{\hbox{\tiny $\P^1$-bundle  over K3}}&\subset& \cX_{\hbox{\tiny\rm quintic}}\ar@{->>}[dl]^{\hbox{\tiny fourfold}}\\
&\cD'_{10}&&&&&&&&&\cD''_{10},
}$$
where the general members of each of  these families are all rational, and
        $$
\xymatrix @C=10pt
{
&&\cX_{\hbox{\tiny\rm nodal}}\ar@{->>}[dl]&&&&&&&\cX_{\rho\hbox{\tiny\rm -plane}}\ar@{->>}[dr]_-{\hbox{\tiny two rational surfaces}}&\subset&\cX_{\hbox{\tiny\rm cubic scroll}}\ar@{->>}[dl]^{\hbox{\tiny fourfold}}
\\
&\cD_8&&&&&&&&&\cD_{12}.
}$$

In the first two of these diagrams, all fourfolds in a given (general) fiber of the period map are of course birationally isomorphic. We think this should be a general fact. More precisely,  all fourfolds with the same period should be related by an analog of the conic transformations discussed in \cite{dim} in the case of threefolds.
        
  It would   be very interesting, as  Laza did for cubic fourfolds (\cite{laza}, Theorem 1.1),
 to determine the exact image   in the period domain $\cD$ of the period map for our fourfolds.

  \begin{ques}
Is the image   of the   period map  equal to  $\cD\moins \cD_2\moins \cD_4\moins \cD_8$? \end{ques}
 
Answering this question seems   far from our present possibilities; to start with, inspired by the results of \cite{has}, one could ask  (see Theorem \ref{th81}) whether the image   of the period map is disjoint from the hypersurfaces $\cD_2$, $\cD_4$, and $\cD_8$.

\end{document}